\newcommand{\N}{\mathbb{N}}
\newcommand{\R}{\mathbb{R}}
\newcommand{\E}{\mathbb{E}}
\newcommand{\PP}{\mathbb{P}}
\def\cald{{\cal D}}
\def\calf{{\cal F}}
\def\calh{{\cal H}}
\def\call{{\cal L}}
\def\calu{{\cal U}}
\def\call{{\cal L}}
\def\eps{{\varepsilon}}
\newcommand\abs[1]{\left| #1 \right|}
\newcommand\norm[1]{\left|\left| #1 \right|\right|}
\def\<{\left\langle }
\def\>{\right\rangle }
\theoremstyle{plain}
\newtheorem{theorem}{Theorem}[section]
\newtheorem{proposition}[theorem]{Proposition}
\newtheorem{lemma}[theorem]{Lemma}
\theoremstyle{definition}
\newtheorem{definition}[theorem]{Definition}
\newtheorem{assumption}[theorem]{Assumption}
\newtheorem{notation}[theorem]{Notation}
\theoremstyle{remark}
\newtheorem{remark}[theorem]{Remark}
\title{A Stochastic Model of Economic Growth in Time-Space}
\author{Fausto Gozzi$^{\dagger}$ \and
Marta Leocata\footnote{Gozzi (\texttt{fgozzi@luiss.it}) and Leocata (\texttt{mleocata@luiss.it}) are at the
Deparment of Economics and Finance,
Luiss University, Roma, Italy.}
}
\begin{document}

%siam_id=M141420
%CODEN=SJCODC
%\slugger{sicon}{2022}{60}{2}{620--651}
\maketitle

\begin{abstract}
We deal with an infinite horizon, infinite dimensional stochastic optimal control problem arising in the study of economic growth in time-space. Such a problem has been the object of various papers in deterministic cases when the possible presence of stochastic disturbances is ignored
(see, e.g., [P.~Brito, {\em The Dynamics of Growth and Distribution in a Spatially
  Heterogeneous World},  working paper 2004/14, ISEG-Lisbon School of Economics and Management, University of Lisbon, 2004], [R.~Boucekkine, C.~Camacho, and G.~Fabbri, {\em J.~Econom.~Theory}, 148 (2013),
  pp.~2719--2736], [G.~Fabbri, {\em J.~Econom.~Theory}, 162 (2016),
  pp.~114--136], and [R.~Boucekkine, G.~Fabbri, S.~Federico, and F.~Gozzi, {\em  J.~Econom.~Geography}, 19 (2019), pp.~1287--1318]).
Here we propose and solve a stochastic generalization of such models where the stochastic term, in line with the standard stochastic economic growth models
(see, e.g., the books [A.~G.~Malliaris and W.~A.~Brock, {\em Stochastic Methods in Economics and
  Finance}, Advanced Textbooks in Economics~17, North Holland, 1982, Chapter~3] and [H.~Morimoto, {\em Stochastic Control and Mathematical Modeling:~Applications in Economics},
  Cambridge Books,  2010, Chapter~9]),
is a multiplicative one, driven by a cylindrical Wiener process.
The problem is studied using the dynamic programming approach.
We find an explicit solution of the associated HJB equation, use a verification type result to prove that such a solution is the value function, and find the optimal feedback strategies.
Finally, we use this result to study the asymptotic behavior of the optimal trajectories.
\end{abstract}

\textbf{Key words}:
Stochastical optimal control problems in infinite dimension with state constraints;
Dynamic programming;
Second order Hamilton-Jacobi-Bellman equations in infinite dimension;
Verification theorems and optimal feedback controls;
Spatial AK model of economic growth;
Invariant measure.

\textbf{AMS classification}:
93E20, 49L20, 35R15, 91G80, 49K27, 93C20, 93E03, 60H15, 60H30.

%
%\begin{DOI}
%10.1137/21M1414206
%\end{DOI}

\section{Introduction}
Economic growth problems play a central role in modern economic theory. Their modeling, in a wide variety of cases (starting from \cite{Ramsey29}), uses as a central tool the optimal control theory.
In recent decades various papers have studied
economic growth problems where the state/control variables such as
capital and consumption depend not only on time $t$, but also on space $x$. We identify this research area as one of ``spatial growth''; see, e.g., the papers
\cite{Brito}, \cite{BCZ09}, \cite{BoucekkineCamachoFabbriJET},
\cite{FabbriJET}, \cite{BFFGJOEG19}, \cite{XY16}, \cite{BrockX08}, \cite{BrockX09}.

On the other hand, to take account of the role of uncertainty, several papers in the economics/mathematics literature have considered economic growth problems where the
state/control variables are stochastic processes and the state equation is a stochastic differential equation (SDE).
See, e.g.,  \cite[Chapter 17]{Acemoglubook}
for discrete time and the books \cite[Chapter 3]{MalliarisBrock81}, \cite[Chapter 9]
{Morimotobook}
and the papers
\cite{Bourguignon74}, \cite{Bismut75},
\cite{Merton75}, \cite{Obstfeld94},
\cite{Turnovsky93}, \cite{Turnovsky95}, \cite{Turnovsky96} for continuous time.

The present paper is the first that tries to put together these two approaches. The resulting problem is an infinite dimensional stochastic control problem, i.e., a problem where the state equation is a stochastic PDE driven by a cylindrical Wiener process in a Hilbert space.
The theory of such problems is quite recent (see the book \cite{FabbriGozziSwiech17} for an account of the theory) and incomplete, particularly in cases like the current one, which involves state constraints and unboundedness of the data such as running objective and control set.

To be more precise we consider a stochastic multiplicative perturbation of the deterministic model of \cite{BFFGJOEG19}; thus the horizon is infinite, the state equation is a bilinear controlled stochastic parabolic equation, and the objective functional is homogeneous.

Despite the additional difficulties created by the presence of the stochastic multiplicative term, we are able to completely solve the problem in the case when state constraints require the capital to live in a half space.
We find an explicit solution of the associated HJB equation
(Theorem \ref{th:HJB_particular solution}), use a verification type result to prove that such a solution is the value function, and find the optimal feedback strategies (Theorem \ref{teo:verification_teo}).
Moreover, we use this result to study the asymptotic behavior of the optimal trajectories, finding two results on the convergence of the optimal state (capital) path (Theorems \ref{teo:convtoK_infty} and \ref{teo:convto0}). 
%In Theorem \ref{teo:convtoK_infty}, a result of convergence for the detrended optimal path is presented, in particular we prove the existence of infinitely many invariant measure, see \cite{farkas2020class} and \cite{vargiolu1999invariant} for similar results on the invariant measure. While In Theorem \ref{teo:convto0}, convergence of the optimal path to the zero is given. 

From the technical point of view we observe that the guess for the value function is completely analogous to that of the corresponding deterministic case, based on the homogeneity of the problem. On the other hand, the method for finding the optimal feedback controls is different due to the difficulties brought by the stochastic term; in particular we need to use a different approach for different values of the elasticity parameter $\sigma$ (see section~\ref{sec:4}).
Last, but not least, the asymptotic analysis for the optimal state capital is completely new, and new techniques are required to study the convergence of the stochastic terms. 
More details on this are given in the body of the paper.

The plan of the paper is the following.
In section~\ref{sec:2} we briefly present the problem and the main assumptions, and we
discuss the well posedness of the state equation (Theorem \ref{teo:existence}, proved in Appendix~\ref{app:A}).
In section~\ref{section:1} we find the explicit solution to the HJB equation
(Theorem \ref{th:HJB_particular solution}).
Section~\ref{sec:4} is devoted to finding the optimal controls through a verification theorem (Theorem \ref{teo:verification_teo}):
it is divided into two subsections treating the cases of
positive and negative power utility, respectively.
Section~\ref{sec:5} provides the two results on the limit at infinity, in probability and in law, of the state process (Theorems \ref{teo:convto0}
and \ref{teo:convtoK_infty}), comparing the results with those of the deterministic model.

\section{The optimal stochastic control problem}\label{sec:2}
We formulate here the stochastic version of the AK model
of \cite{BFFGJOEG19} that we study in this paper.

\subsection{The deterministic problem}

To make clear the novelties of our problem, we start by recalling such a deterministic model (in the slightly more general form studied in \cite[section 5]{CFG}).

The space variable $x$ belongs to $S^1$.
The state is the capital $k$, while the control is the
consumption $c$.
In \cite[section 5]{CFG}, the state
equation for the capital is
\begin{equation}\label{eq:K}
\begin{cases}
dk(t,x)=\frac{\partial^2}{\partial x^2}k(t,x)+A(x)k(t,x)-c(t,x) N(x),\qquad
(t,x)\in \R_+\times S^1,\\
k(0,x)=k_0(x), \qquad x \in S^1,
\end{cases}
\end{equation}
%\begin{equation}\label{eq:K}
%\begin{cases}
%dK(t,x)=\frac{\partial^2}{\partial x^2}K(t,x)+A(x)K(t,x)-c(t,x) N(x)+\sum_j \alpha_j(K(t,x))\langle K,e_j\rangle e_j(x)d\beta_j,\,\, (t,x)\in(0,T]\times S^1\\
%K(0,x)=K_0\in L^2(S^1)
%\end{cases}
%\end{equation}
where $K(t,x)$ (respectively, $c(t,x)$) is the capital (the per capita consumption) at time $t$ in the location $x\in S^1$, $A(x)$ is the exogenous location-dependent technological level, and $N(x)$ is the density of population ($A,N$ are strictly positive functions in $L^\infty(S^1)$).
%and such that there exists a constant $c_N>0$ that bound from below $N$, i.e. $N(x)>c_N$ for each $x\in S^1$, $e_j$ is a countable family of function in $L^2(S^1)$ and $(\beta_j(t))_{j\geq 1}$ is a countable family of independent one-dimensional Brownian Motion.
We can reformulate this equation in an abstract formulation
in the Hilbert space $\calh:= L^2(S^1)$; we use $K(t):=k(t,\cdot)$ and, with a little abuse of notation, $c(t):=c(t,\cdot)$:
\begin{equation}\label{eq:K_abstract}
\begin{cases}
K'(t)=\mathcal{L}K(t)-c(t)N
%+B(K(t))dW(t)
,\,\, t\ge 0,\\
K(0)=K_0\in \calh=L^2(S^1),
\end{cases}
\end{equation}
where $\mathcal{L}:\mathcal{D}(\mathcal{L}):\calh\to \calh$ is the linear unbounded operator\footnote{This operator can also be seen as an operator on $[0,2\pi]$
with periodic boundary conditions (in this equivalent setting
the domain is $H^2([0,2\pi])$ with boundary conditions $k(0)=k(2\pi)$ and $k'(0)=k'(2\pi)$).}
$$
\mathcal{D}(\mathcal{L})=H^2(S^1)\subset \calh,
\qquad
(\mathcal{L}k)(x)=k''(x)+A(x)k(x).
$$
As recalled from \cite[section 5]{CFG}, the operator $\call$ generates an analytic semigroup $\{e^{t\call}\}_{t\ge 0}$ and is diagonal with respect to an orthonormal complete system $\{e_j\}_{j \in \N}$.
The eigenvalues $(\lambda_j)_{j\in \N}$ are taken decreasing in $j$, and $e_0$ is continuous and strictly positive.

For every $K_0\in \calh$ and $C\in L^1_{loc}(\R^+,\calh)$ the state equation \eqref{eq:K} admits a unique mild solution (which we call $K^{K_0,c}$) given by
$$
K^{K_0,c}(t)=
e^{t\mathcal{L}}K_0+\int_{0}^{t}e^{(t-s)\mathcal{L}}c(t)N ds.
$$
The reward functional is\footnote{Here and later the subscript $D$ stands for ``deterministic,'' as opposed to the subscript $S$, used later (subsection \ref{SSE:STOCHPROBLEM}), which stands for ``stochastic.''}
\[
J_D(c)=
%\E\left[
\int_0^\infty
e^{-\rho t}\mathcal{U}(c(t))dt,
%\right]
\]
where
\[
\mathcal{U}(c(t))=
\int_0^{2\pi}\frac{c(t,x)^{1-\sigma}}{1-\sigma}f(x)dx,
\]
with $\rho>0$, $\sigma\in(0,1)\cup (1,\infty)$, $f\in L^\infty(S^1,\R^+)$.
The set of admissible controls is
\[
\mathcal{A}_D(K_0)=\{C\in L^1_{loc}(\R^+,\calh)|\,
C(t)(x)\ge 0, \; K^{K_0,c}(t)\geq 0\ \forall t\geq 0
%a.s.
%\,\,\text{and},K^{K_0,c}(\cdot)\neq 0
\},
\]
and the value function is
\[V_D(K_0)=\sup_{c\in \mathcal{A}_D(K_0)}J_D(c).\]
However, since the positivity constraint on the capital is very difficult to treat, in \cite{BFFGJOEG19,CFG}, as a first step, one considers the weaker constraint $\<K^{K_0,c}(t),e_0\>_H \ge 0$, where $e_0$ is the (strictly positive) eigenvector of norm $1$ in $\calh$, associated to the highest eigenvalue $\lambda_0$ of $\mathcal{L}$.\footnote{We recall (see \cite[section 5]{CFG})
that $\call$ is a diagonal operator with respect to an orthonormal basis $\{e_n\}_{n\in\N}$ of $\calh$. We call $\{\lambda_n\}_{n\in\N}$ the sequence of eigenvalues which are taken in decreasing order.}
Hence the new set of admissible controls is
\[
\mathcal{A}_{D,e_0}(K_0)=\{C\in L^1_{loc}(\R^+,\calh)|\,
C(t)(x)\ge 0, \; \<K^{K_0,c}(t),e_0\>\geq 0\ \forall t\geq 0
%a.s.
%\,\,\text{and},K^{K_0,c}(\cdot)\neq 0
\},
\]
and the corresponding value function is
\[V_{D,e_0}(K_0)=\sup_{c\in \mathcal{A}_{D,e_0}(K_0)}J_D(c).\]
Clearly, for all $K_0\ge 0$ we have
$$
\mathcal{A}_{D}(K_0)\subseteq \mathcal{A}_{D,e_0}(K_0)
\quad \Longrightarrow \quad
V_{D}(K_0) \le
V_{D,e_0}(K_0).
$$
In \cite{BFFGJOEG19} the problem with the weaker constraint is completely solved by finding an explicit solution of the HJB equation and the optimal feedback strategies. In \cite[section 5]{CFG} the connection with the initial, more difficult, problem with positivity constraint is studied by setting the problem in the space of continuous functions, which is harder to treat as it is a Banach nonreflexive space.

\subsection{The stochastic perturbation of the state equation}

Our goal here is to consider a stochastic perturbation of the state equation \eqref{eq:K_abstract}, consequently
changing the objective functional and the set of admissible controls.

The stochastic perturbation is written, in the abstract equation
\eqref{eq:K_abstract}, as
$$B(K(t))dW(t).$$
Here we have the following:
\begin{itemize}
  \item
$W$ is a cylindrical Wiener process
in the space $\calh$,
with a stochastic basis $(\Omega, \calf, (\calf_t)_{t\ge 0},\PP)$,
where the filtration $(\calf_t)_{t\ge 0}$ is the one generated by $W$ augmented with the $\PP$-null sets.
  \item
The operator $B:\calh\to \mathbf{B}(\calh)$ is a linear bounded operator.
In what follows we will denote by
$\mathbf{B}(\calh)$ the space of bounded linear operators
$\calh\to \calh$ and by
$\mathbf{HS}(\calh)$ the space of Hilbert--Schmidt operators from $\calh$ in $\calh$.
  \end{itemize}
The choice of $B$ linear is in line with the already quoted stochastic growth models (see, e.g., \cite[Chapter 3]{MalliarisBrock81}, \cite[Chapter 9]
{Morimotobook}), where the state equation is one-dimensional and the noise term is of the type $\sigma k(t) d\beta(t)$ for some $\sigma\in \R$ and one-dimensional Wiener process $\beta(\cdot)$.

Before proceeding we provide a remark on a straightforward multidimensional generalization of the
one-dimensional multiplicative noise term
$\sigma k(t) d\beta(t)$.

\begin{remark}
\label{rm:GBM}
The typical $d$-dimensional generalization of
the one-dimensional term $\sigma k(t) d\beta(t)$
is $(\sigma_i k_i(t)d\beta_i(t))_{i=1,\dots,d}$,
where $\beta(\cdot)$ is a Wiener process in $\R^d$,
$\sigma \in \R^{d}$, $k(t)\in \R^d$.
For example, in the case of multidimensional
geometric Brownian motion the equations are
\[
dk_{i}(t)=\mu_ik_i(t)dt+
\sigma_i k_i(t) d\beta_i(t),\,\,i=1,\dots,d,\]
with $\mu_i,\sigma_i\in \R^+$. We rewrite this in  vectorial notation, in terms of the diagonal matrices $\Sigma(k)$, $M$, as
\begin{equation}
\Sigma(k)=\begin{pmatrix}
\sigma_1 k_1 & &\\
&\ddots &\\
&&\sigma_d k_d \\
\end{pmatrix}, \qquad\qquad
M=\begin{pmatrix}
\mu_1& &\\
&\ddots &\\
&&\mu_d \\
\end{pmatrix},
\end{equation}
and, in terms of the vectors $\beta(t)=\left(\beta_1(t),\dots,\beta_d(t)\right)$, $k(t)=\left(k_1(t),\dots,k_d(t)\right)$ and $(e_i)_{1=1,\dots,d}$ (the orthonormal canonical basis of $\R^d$), as

\[
dk(t)
=M k(t)dt+\Sigma(k(t)) d\beta(t)=M k(t)dt+\sum_i\sigma_i\langle k(t),e_i \rangle e_i d\beta_i(t).
\]
%\begin{align*}
%dk(t)&=
%M k(t)dt+\Sigma(k(t)) d\beta(t)\\
%&=M k(t)dt+\sum_i\sigma_i\langle k(t),e_i \rangle e_i d\beta_i(t)
%\vspace{-0.1truecm}
%\end{align*}
It is quite clear that in the above model the canonical basis can be replaced by any orthonormal basis in $\R^d$.
%%%\hfill\qedo
\end{remark}

Following along the lines of the above remark,
a reasonable choice of our operator $B$
is to choose it diagonal with respect to the orthonormal system
$\{e_j\}_{j\in\N}$ of eigenvectors of the operator $\mathcal{L}$.
Moreover, for the moment we allow the analogous coefficients $\sigma_i$ of Remark \ref{rm:GBM} to be possibly state-dependent.
Hence we define the operator
\begin{align*}
 B:\mathcal{H}&\to \mathcal{L}(\mathcal{H};\mathcal{H})\\
 K&\mapsto B(K)
\end{align*}
on the orthonormal basis $\{e_j\}_{j\in\N}$ as
\begin{equation}\label{eq:B_ej}
B(K)e_j=\alpha_j(K)\langle K,e_j \rangle e_j,
\end{equation}
where $\alpha_j:\mathcal{H}\to\R$ is a given function.
Consequently, for each $u\in \mathcal{H}$,
\[
B(K)u=\sum_j\langle u,e_j\rangle\alpha_j(K)\langle K,e_j \rangle e_j.
\]
Then writing, formally, $W(t)=\sum_j \beta_j(t)e_j$, we have
\[B(K)W(t)=\sum_j \beta_j(t)\alpha_j(K)\langle K,e_j \rangle e_j .\]
We will use, sometimes separately,
the following assumptions on the family $\{\alpha_j\}_j$.

\begin{assumption}
\label{ass:state}
\begin{enumerate}
\item[]
\item[{\rm (i)}] \label{condition_0}
$\alpha_j$ is measurable and bounded, and
$\alpha_\infty=\sup_j\norm{\alpha_j}^2_\infty<\infty$.
\item[{\rm (ii)}] \label{condition_2} $\alpha_j$ is Lipschitz-continuous for each $j$ and uniformly in $j\in \N$.
\item[{\rm (iii)}]\label{condition_4}  The first component $\alpha_0$ does not depend on the state variable $K$.\footnote{This condition is needed  to derive an explicit solution for the associated optimal control problem.}
    %, i.e. $\alpha_0(K)\equiv \alpha_0\in\R$;
    %\eqref{P_aux}
\end{enumerate}
\end{assumption}

%The first two assumptions are mutually exclusive, for most of cases we assume condition \eqref{condition_1}, but in some step we can relax condition \eqref{condition_1} and consider \eqref{condition_0}\doubt{non so se vale la pena tenerle entrambe, perchè di fatto per l'esistenza ci serve la condizione più forte}.Assumption \eqref{condition_4} is required, in order to write the explicit solution of HJ, see Lemma \ref{eq:HJB_particular solution}.

The resulting state equation is then
\begin{equation}\label{eq:K_abstractstoch}
\begin{cases}
dK(t)=[\mathcal{L}K(t)-C(t)N]dt + B(K(t))dW(t)
%+B(K(t))dW(t)
,\,\, t\ge 0,\\
K(0)=K_0\in \calh=L^2(S^1).
\end{cases}
\end{equation}

The following theorem establishes the well posedness of
\eqref{eq:K_abstractstoch} and is proved in Appendix \ref{appSE}.

\begin{theorem}\label{teo:existence}
Let Assumptions \eqref{ass:state}{\rm (i) and (ii)} hold. Given the initial condition $K_0\in \mathcal{H}$, equation \eqref{eq:K_abstract} admits a unique mild solution, and for every $p>0$ there exists $C_p$ such that
\[\sup_{t\in[0,T]}\E\left[\abs{K^{K_0,c}(t)}^p\right]\leq C_p\left(1+\norm{K_0}^p_{\mathcal{H}}\right).\]
\end{theorem}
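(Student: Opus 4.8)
The plan is to recast \eqref{eq:K_abstractstoch} in mild form and solve it by a contraction argument in a space of $p$-integrable predictable processes, along the lines of the standard theory of mild solutions of semilinear stochastic evolution equations (see \cite{FabbriGozziSwiech17}). The mild formulation reads
\[
K(t)=e^{t\call}K_0-\int_0^t e^{(t-s)\call}C(s)N\,ds+\int_0^t e^{(t-s)\call}B(K(s))\,dW(s).
\]
The key structural observation is that the drift carries no state nonlinearity: beyond the linear generator $\call$ it reduces to the forcing term $-C(s)N$, which is independent of $K$ and satisfies $\norm{C(s)N}\le\norm{N}_\infty\norm{C(s)}$, so that for $C\in L^1_{loc}(\R^+,\calh)$ the deterministic convolution is a well-defined $\calh$-valued map bounded on $[0,T]$ by $M_T\norm{N}_\infty\int_0^T\norm{C(s)}\,ds$, where $M_T=\sup_{t\in[0,T]}\norm{e^{t\call}}<\infty$ since $\call$ generates an analytic semigroup with eigenvalues bounded above by $\lambda_0$. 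The whole nonlinearity therefore sits in the diffusion $B$.

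The first substantial step is to check that $B$ maps $\calh$ into $\mathbf{HS}(\calh)$ with linear growth, which is precisely what makes the stochastic convolution against the cylindrical process $W$ well defined. Computing the Hilbert--Schmidt norm on the eigenbasis and using Assumption \eqref{ass:state}(i),
\[
\norm{B(K)}^2_{\mathbf{HS}}=\sum_j\alpha_j(K)^2\<K,e_j\>^2\le\alpha_\infty\norm{K}^2 .
\]
For the Lipschitz dependence I would split $\alpha_j(K)\<K,e_j\>-\alpha_j(K')\<K',e_j\>=\alpha_j(K)\<K-K',e_j\>+(\alpha_j(K)-\alpha_j(K'))\<K',e_j\>$ and sum the squares, using the uniform Lipschitz bound $L$ of Assumption \eqref{ass:state}(ii) to obtain
\[
\norm{B(K)-B(K')}^2_{\mathbf{HS}}\le 2\alpha_\infty\norm{K-K'}^2+2L^2\norm{K'}^2\norm{K-K'}^2 .
\]
Thus $B$ is only \emph{locally} Lipschitz, with a Lipschitz constant growing linearly in $\norm{K'}$; it is globally Lipschitz exactly when the $\alpha_j$ are state-independent (in which case $B$ is linear in $K$). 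The factor $\norm{K'}^2$ is, in my view, the main obstacle: it rules out a one-shot global contraction and forces a localization argument.

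To construct the solution I would truncate, replacing $B$ by $B_n(K)=B\big(\theta_n(\norm{K})K\big)$ with a smooth cutoff $\theta_n$ supported on the ball of radius $n$, so that $B_n$ is globally Lipschitz and of linear growth, with constants depending on $n$. For each $n$ the map
\[
\Gamma_n(Y)(t)=e^{t\call}K_0-\int_0^t e^{(t-s)\call}C(s)N\,ds+\int_0^t e^{(t-s)\call}B_n(Y(s))\,dW(s)
\]
is a contraction on the Banach space of predictable processes with norm $\big(\sup_{t\in[0,T]}\E[\norm{Y(t)}^p]\big)^{1/p}$, for $T$ small or after inserting an exponential weight $e^{-\beta t}$; here the Itô term is controlled by a Burkholder--Davis--Gundy / factorization estimate for the analytic semigroup together with the growth bound above. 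This yields a unique mild solution $K_n$ of the truncated equation, and setting $\tau_n=\inf\{t:\norm{K_n(t)}\ge n\}$, uniqueness forces the $K_n$ to coincide up to $\tau_n\wedge\tau_m$, so they patch into a process $K$ defined up to $\tau_\infty=\lim_n\tau_n$.

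Finally I would prove the stated bound and close the global existence at once through an a priori estimate. Applying the $L^p$ estimate directly to the mild equation (first for $p\ge 2$), using $M_T$, the forcing bound, and the BDG/factorization estimate with $\norm{B(K(s))}_{\mathbf{HS}}^2\le\alpha_\infty\norm{K(s)}^2$, one gets after Jensen/Hölder on the stochastic term
\[
\E\big[\norm{K(t)}^p\big]\le c_1\norm{K_0}^p+c_2\Big(\int_0^T\norm{C(s)}\,ds\Big)^p+c_3\int_0^t\E\big[\norm{K(s)}^p\big]\,ds,
\]
whence Gronwall's lemma gives $\sup_{t\in[0,T]}\E[\norm{K(t)}^p]\le C_p(1+\norm{K_0}^p)$. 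Applied to $K_n$ uniformly in $n$, the same bound forces $\PP(\tau_\infty\le T)=0$, i.e.\ $\tau_n\to\infty$, producing a global solution, and uniqueness transfers from the truncated equations. The range $0<p<2$ then follows from the case $p=2$ by Jensen's inequality, since $x\mapsto x^{p/2}$ is concave. The delicate points are the stochastic-convolution moment estimate for the analytic (non-contraction) semigroup and the verification that the a priori bound is uniform in the truncation level $n$.
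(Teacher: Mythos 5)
Your proposal is correct and shares the paper's skeleton (mild formulation, fixed point in $\mathcal{H}_p(0,T)$, Gronwall for the moment bound), but it diverges in two places that are worth recording. First, the paper never localizes: its Lemma~\ref{lem:lipschtzianity_B} establishes exactly the estimate you derive, with a Lipschitz constant proportional to $1+\norm{h}_{\mathcal{H}}^2$, and then invokes the global-Lipschitz contraction theorem (Theorem~1.152 of \cite{fabbri2017stochastic}) as if that constant were uniform. As you correctly observe, under Assumption~\ref{ass:state}(ii) as written the map $K\mapsto\alpha_j(K)\langle K,e_j\rangle$ is only locally Lipschitz, so the one-shot contraction is justified only when the $\alpha_j$ are state-independent (the case actually used from section~\ref{sec:5} onward); your truncation--stopping-time--a-priori-estimate scheme closes this gap and is the more complete argument. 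Second, you bound $\norm{B(K)}_{\mathbf{HS}}^2\le\alpha_\infty\norm{K}^2$ directly, pulling out $\sup_j\norm{\alpha_j}_\infty^2$ against $\sum_j\langle K,e_j\rangle^2$, whereas the paper's Lemma~\ref{lem:hilbert_schimdt_condition} uses the cruder bound $\langle K,e_j\rangle^2\le\norm{K}^2$ and therefore needs either the summability $\sum_j\norm{\alpha_j}_\infty^2<\infty$ or the smoothing factor $e^{2\lambda_jt}$ of the analytic semigroup (summable in the one-dimensional spatial setting since $\lambda_j\sim-j^2$); your sharper estimate makes that detour unnecessary. One point to tighten: to conclude $\PP(\tau_\infty\le T)=0$ you need $\E\bigl[\sup_{t\le T}\norm{K_n(t)}^p\bigr]$ bounded uniformly in $n$, not merely $\sup_{t\le T}\E\bigl[\norm{K_n(t)}^p\bigr]$ as in your displayed inequality; the factorization method you already invoke delivers the maximal-function version, so state that explicitly before applying Chebyshev to $\{\sup_{t\le T}\norm{K_n(t)}\ge n\}$.
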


\subsection{Our stochastic control problem}
\label{SSE:STOCHPROBLEM}

Similarly to the deterministic problem of~\cite{BFFGJOEG19,CFG},
we assume that the policy maker operates in order  to maximize the functional
\[J_S(c)=\E\left[\int_0^\infty e^{-\rho t}\mathcal{U}(c(t))dt\right]\]
over all $c\in \mathcal{A}_S(K_0)$, where
\[
\mathcal{A}_S(K_0)=\{c\in L^2(\Omega, L^1_{loc}(\R^+,\calh))|\, c(t) \ge 0,\, K^{K_0,c}(t)\geq 0\ \forall t\geq 0,\,\,\text{a.s.}\},
\]
and, as in the deterministic case,
\[\mathcal{U}(c(t))=
\int_0^{2\pi}\frac{c(t,x)^{1-\sigma}}{1-\sigma}f(x)dx,\]
with $\rho>0$, $\sigma\in(0,1)\cup (1,\infty)$, $f\in C(S^1,\R^+)$.
We call $\mathbf{(P)}$ the problem
\begin{equation}\label{P}\tag{{\bf P}}
    \text{maximize } J_S(c) \text{ over } c\in  \mathcal{A}_S(K_0),
\end{equation}
and we define the value function
\[V_S(K_0)=\sup_{c\in \mathcal{A}_S(K_0)}J_S(c).\]
As in the deterministic case (see  \cite{CFG}), we see that the problem with the positivity constraints (i.e., the admissible set $\mathcal{A}_S(K_0)$) is not explicitly solvable and  is hard to treat since the positive cone in the space $\calh$ has empty interior.
Hence, as a first step, we consider the problem $\mathbf{(P_0)}$ with the weaker constraint $\<K,e_0\>\ge 0$, i.e., with the set of admissible controls given by
\[
\mathcal{A}_{S,e_0}(K_0)=\{c\in L^2(\Omega, L^1_{loc}(\R^+,\calh))|\, c(t) \ge 0,\,\<K^{K_0,c}(t),e_0\>\geq 0\ \forall t\geq 0,\,\,\text{a.s.}\}.
\]
The corresponding value function is
\[V_{S,e_0}(K_0)=\sup_{c\in \mathcal{A}_{S,e_0}(K_0)}J_S(c).\]
As in the deterministic case,
$$
\mathcal{A}_{S}(K_0)\subseteq \mathcal{A}_{S,e_0}(K_0)
\quad \Longrightarrow \quad
V_{S}(K_0) \le
V_{S,e_0}(K_0).
$$
It is clear that if an optimal strategy for problem $\mathbf{(P_0)}$
(with initial datum $K_0$) lies in $\mathcal{A}_{S}(K_0)$,
then it must be optimal also for problem $\mathbf{(P)}$; however, in general this is a very difficult task.
In what follows we will solve completely the problem $\mathbf{(P_0)}$, providing also some results on the asymptotic behavior of optimal trajectories. The results presented in the paper will provide a basis for the study of the original problem, whose full treatment will be the object of a subsequent work.

We will need the following assumptions: the first is the usual one which guarantees the finiteness of the value function
(see, e.g., \cite{BFFGJOEG19,CFG} or, in different contexts,
\cite{FGSET,DiGiacintoFedericoGozzi11});
the second assumption is coherent with the economic problem and allows one to avoid the addition of heavy integrability conditions like those of \cite[Assumption 3.3]{CFG}. 

\begin{assumption}
\label{ass:cost}
\item[]
\begin{itemize}
\item[{\rm (i)}] \label{condition_5}
We have
\[\rho>\lambda_0(1-\sigma)-\frac{1}{2}\sigma(1-\sigma)\alpha^2_0.\]
\item[{\rm (ii)}] \label{condition_6}
The functions $A$, $N$, and $f$ belong to $L^\infty (S^1;\R_+)$, and
for some $\eps >0$ we have $N(x)\ge \eps$ for all $x \in S^1$. (Note that the first eigenvector $e_0$ is continuous and strictly positive.)
\end{itemize}
\end{assumption}

\begin{notation}
\label{not:H+}
The following spaces will be used in subsequent steps:
\[
\calh^{+}=\{f\in \mathcal{H}:f\geq 0\}, \qquad
\calh^{++}=\{f\in \mathcal{H}:f> 0\},
\]
\[\calh^+_{e_0}=\{f\in \mathcal{H}:\langle f,e_0\rangle\geq 0\}, \qquad
\calh^{++}_{e_0}=\{f\in \mathcal{H}:\langle f,e_0\rangle> 0\}
={\rm Int} \calh^+_{e_0}
.\]
With this notation we have
\[{\mathcal{A}}_{S,e_0}(K_0)=\{c\in L^p(\Omega, L^1_{loc}(\R^+,\calh^+))\,\,\forall p\geq 0|\, K^{K_0,c}(t)\in \calh^+_{e_0}\,\,\, \forall t\geq 0,\,\,a.s.\}.
%%%\hfill \qedo
\]
\end{notation}

\begin{remark}
Notice that Assumption \ref{ass:cost}(ii) implies the following integrability conditions (see Assumption 3.3 in \cite{CFG}):
\[
\int_{0}^{2\pi}\left(f(x)\right)^{1/\sigma}\left(N(x)e_0(x)
\right)^{\frac{\sigma-1}{\sigma}}dx<+\infty,\quad \int_{0}^{2\pi}\left(\frac{f(x)}{N(x)e_0(x)}
\right)^{2/\sigma}dx<+\infty. %%%\hfill\qedo
\] %%%\hfill\qedo
\end{remark}

\begin{remark}
In the current paper, the term $f$ stands for a spatial weight of the utility from consumption considered by the social planner. Since the consumption is made by the population, it is reasonable
to take $f(x)=f_0(N(x))$ for some increasing function $f_0:\R_+\to \R_+$ such that $f_0(0)=0$. A typical case is $f_0(N)=N^\beta$ for some $\beta \ge 0$ (see, e.g., \cite[section 5]{CFG}).
In the case of \cite{BFFGJOEG19} we have $f=N$ (i.e., $\beta=1$), and the planner sums up the utilities of all the individuals in location $x$ and time $t$. On the other hand, when $f_0$ is constant (i.e., $\beta=0$), the planner does not consider the individuals, but only the space average of the consumption. %%%\hfill\qedo
\end{remark}

\section{Explicit solution for the HJB equation}
\label{section:1}
The HJB equation associated to the stochastic optimal control problem ${\bf(P_0)}$ is the following:
\begin{align}\label{eq:HJB}
\rho v(K)=\langle K,\mathcal{L}D v(K)\rangle+\sup_{c\in H^+}\{\mathcal{U}(c)-\langle cN,D v(K)\rangle\}+\frac{1}{2}\text{Tr}\left[B(K)B(K)^*D^2v(K)\right]&,\\
\text{with}\,\,\,K\in \calh^+_{e_0}&.\nonumber 
\end{align}
To the best of our knowledge there are no results on viscosity solutions for 
(\ref{eq:HJB}). For some works on the subject see \cite[Chapter 3]{fabbri2017stochastic} and \cite{Cosso2018},
\cite{Rosestolato2016}, \cite{Ren2020}. 

Note that here $v$ is the unknown and
\begin{itemize}
  \item
the term
$\langle \mathcal{L}K,D v(K)\rangle$, which is well defined only for $K\in \cald(\mathcal{L})$,
is replaced here by $\langle K,\mathcal{L}D v(K)\rangle$ which, if $Dv\in\cald(\call)$, makes sense for all $K\in \calh$;
  \item
the map
\begin{equation}\label{eq:HCV}
H_{CV}:\calh\times \calh_+\to \R, \qquad H_{CV}(\alpha;c)=
\mathcal{U}(c)-\langle cN,D \alpha \rangle
\end{equation}
 is called the \textit{current value Hamiltonian};
  \item
the map
\begin{equation}\label{eq:HMAX}
H_{MAX}:\calh \to \R\cup\{+\infty\}, \qquad
H_{MAX}(\alpha)=\sup_{c\in H^+}H_{CV}(\alpha;c)
\end{equation}
is called the
\textit{maximum value Hamiltonian} and is equal to
$\mathcal{U}^*(\alpha)$, the Legendre transform of $\calu$.
\end{itemize}
We start by guessing an explicit solution of \eqref{eq:HJB}.
%recalling a fundamental result related to our optimal control problem \eqref{P_aux}, see Theorem 2.31 in \cite{fabbri2017stochastic} (or Theorem 3.70 in \cite{fabbri2017stochastic} for an improved version).

\begin{definition}
\label{df:sol}
We say that a function $v$ is a classical solution of \eqref{eq:HJB} over $\calh^{++}_{e_0}$ if, in every point of
$\calh^{++}_{e_0}$,
\begin{itemize}
  \item
$v$ admits first and second continuous Fr\'echet derivatives;
  \item
$\mathcal{L}D v$ and $\text{Tr}\left[B(\cdot)B(\cdot)^*D^2v\right]$
are continuous;
  \item
\eqref{eq:HJB} is satisfied.
\end{itemize}
\end{definition}

\begin{theorem}\label{th:HJB_particular solution}
%Let us assume that conditions \eqref{condition_1},  \eqref{condition_4}, \eqref{condition_2}, \eqref{condition_3} and \eqref{condition_5} hold.
Let Assumptions {\rm \ref{ass:state}(i)} and {\rm \ref{ass:cost}}  hold.
The function
\begin{equation}\label{eq:HJB_particular solution}
w:\calh^{+}_{e_0} \to \R\cup \{-\infty\},
\qquad
w(K)=\gamma\frac{\langle K,e_0 \rangle^{1-\sigma}}{1-\sigma}
\end{equation}
with\footnote{Note that formula \eqref{eq:gamma} is well defined thanks to Assumption \ref{ass:cost}.}
\begin{equation}\label{eq:gamma}
\gamma
=\left(\frac{\sigma}{\rho-\lambda_0(1-\sigma)
+\frac{1}{2}\alpha^2_0\sigma(1-\sigma)} \right)^\sigma\cdot\left(\int_0^{2\pi}
(N(x)e_0(x))^{-\frac{1-\sigma}{\sigma}}
f(x)^{\frac{1}{\sigma}}dx\right)^\sigma
\end{equation}
is a classical solution of \eqref{eq:HJB} over $\calh^{++}_{e_0}$.
\end{theorem}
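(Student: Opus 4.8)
The plan is to verify the ansatz directly by substitution, exploiting the fact that $w$ depends on $K$ only through the scalar $p := \langle K, e_0\rangle$, so that the infinite-dimensional computation collapses to differentiating a one-dimensional power function. Writing $w(K) = \gamma\, p^{1-\sigma}/(1-\sigma)$ and using that $p$ is linear in $K$ with $Dp = e_0$, I would first compute
\[
Dw(K) = \gamma\, p^{-\sigma} e_0, \qquad D^2 w(K) = -\gamma\sigma\, p^{-\sigma-1}\, e_0 \otimes e_0.
\]
Both derivatives are continuous on $\calh^{++}_{e_0}$ (where $p>0$); since $e_0$ is an eigenvector of $\mathcal{L}$ we have $Dw(K)\in\mathcal{D}(\mathcal{L})$, so $\mathcal{L}Dw$ is well defined and continuous, and the trace term will be shown to be continuous below. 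This checks the regularity requirements of Definition \ref{df:sol}, leaving only the verification of the identity \eqref{eq:HJB}.

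Next I would evaluate the three terms separately. For the drift term, $\mathcal{L}e_0 = \lambda_0 e_0$ gives $\langle K, \mathcal{L}Dw(K)\rangle = \gamma\lambda_0 p^{1-\sigma}$. For the Hamiltonian, substituting $Dw$ reduces $H_{MAX}$ to the pointwise maximization, for a.e.\ $x$, of $c \mapsto \frac{c^{1-\sigma}}{1-\sigma}f(x) - \gamma p^{-\sigma} c\, N(x)e_0(x)$ over $c\ge 0$; this is the Legendre transform of the power utility, whose maximizer is explicit and whose value integrates to $\frac{\sigma}{1-\sigma}\gamma^{-(1-\sigma)/\sigma} p^{1-\sigma} I$, where $I := \int_0^{2\pi}(N e_0)^{-(1-\sigma)/\sigma}f^{1/\sigma}\,dx$ is finite by Assumption \ref{ass:cost}. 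This computation is the main analytic step and the place where the homogeneity of the problem produces the exponent $p^{1-\sigma}$; some care is needed to confirm the maximizer is admissible and the integral converges.

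For the second-order term I would use that $B(K)$ is diagonal and self-adjoint in the basis $\{e_j\}$, so $B(K)B(K)^* e_j = \alpha_j(K)^2 \langle K, e_j\rangle^2 e_j$, while $D^2 w(K)$ is the rank-one operator supported on $e_0$. Hence only the $j=0$ mode contributes to the trace, giving
\[
\tfrac{1}{2}\mathrm{Tr}\big[B(K)B(K)^* D^2 w(K)\big] = -\tfrac{1}{2}\gamma\sigma\,\alpha_0^2\, p^{1-\sigma}.
\]
Here Assumption \ref{ass:state}(iii), that $\alpha_0$ is independent of $K$, is essential: it is exactly what makes this term proportional to $p^{1-\sigma}$ and preserves the self-similar structure (and incidentally shows the trace reduces to a single finite term, so its continuity is immediate).

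Finally I would assemble the three contributions, equate them to $\rho w(K) = \frac{\rho\gamma}{1-\sigma}p^{1-\sigma}$, and divide through by $p^{1-\sigma}>0$. This collapses \eqref{eq:HJB} to the scalar relation
\[
\gamma\Big[\rho - \lambda_0(1-\sigma) + \tfrac{1}{2}\sigma(1-\sigma)\alpha_0^2\Big] = \sigma\,\gamma^{-(1-\sigma)/\sigma} I,
\]
equivalently $\gamma^{1/\sigma}\big[\rho - \lambda_0(1-\sigma) + \tfrac{1}{2}\sigma(1-\sigma)\alpha_0^2\big] = \sigma I$. Solving for $\gamma$ yields precisely formula \eqref{eq:gamma}, the bracketed denominator being strictly positive by Assumption \ref{ass:cost}(i). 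Thus the only genuine difficulty is the Hamiltonian/Legendre computation together with the bookkeeping of exponents; the remainder is the routine check that the chosen constant $\gamma$ balances the resulting algebraic identity.
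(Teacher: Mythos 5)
Your proposal is correct and follows essentially the same route as the paper: compute the Fr\'echet derivatives of the one-dimensional ansatz in $p=\langle K,e_0\rangle$, evaluate the Legendre transform of $\mathcal{U}$ at $Dw$, observe that only the $j=0$ mode survives in the trace, and divide by $p^{1-\sigma}$ to reduce \eqref{eq:HJB} to the scalar identity determining $\gamma$. Your remark that the state-independence of $\alpha_0$ (Assumption \ref{ass:state}(iii)) is what keeps the trace term proportional to $p^{1-\sigma}$ is a valid and worthwhile observation, since the theorem as stated formally invokes only part (i) while the computation implicitly uses this constancy.
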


\begin{proof}
Let $\alpha \in C(S^1,(0,+\infty))$. Then
%\begin{equation}\label{eq:U^*}
%    \mathcal{U}^*(\alpha)=\sup_{c\in H^+}\{\mathcal{U}(c)-\langle cN,\alpha\rangle\},
%\end{equation}
it is easy to check that the supremum over $c\in \calh^+$
in \eqref{eq:HMAX} is realized by $\alpha^{-1/\sigma}$; hence
\begin{equation}\label{eq:sup}
H_{MAX}(\alpha)=\mathcal{U}^*(\alpha)=\int_0^{2\pi}\frac{\sigma}
{1-\sigma}(N(x)\alpha)^{-\frac{1-\sigma}{\sigma}}
f(x)^{\frac{1}{\sigma}}dx.\end{equation}
Now, to prove that $w(K)=\gamma\frac{\langle K,e_0 \rangle^{1-\sigma}}{1-\sigma}$ is a classical solution of \eqref{eq:HJB}, we compute its derivatives, show that they satisfy the regularity required in Definition \ref{df:sol}, and  plug them into the equation, searching for the value of $\gamma$ which satisfies the identity.
The first and  second Fr\'echet derivatives are given as
\begin{equation}\label{eq:DV}
Dw(K)=\gamma\langle K,e_0\rangle^{-\sigma}e_0,\quad D^2w(K)=-\gamma\sigma\langle K,e_0\rangle^{-\sigma-1}\langle \cdot,e_0\rangle e_0.
\end{equation}
It is immediate to see that
$\call Dw(K)=\gamma\langle K,e_0\rangle^{-\sigma}\call e_0$
is well defined and continuous in $H^{++}_{e_0}$.
Moreover, since $B(K)B(K)^*D^2w(K)$ is a self-adjoint operator, we have
\begin{multline}\label{eq:trace}
        \text{Tr}\left[B(K)B(K)^*D^2w(K)\right]
        =\sum_j\langle B(K)B(K)^*D^2w(K)[e_j],e_j \rangle\\
      =\sum_j-\sigma\gamma\langle K,e_0\rangle^{-\sigma-1}\langle e_j,e_0\rangle\alpha_j(K)^2\langle K,e_0 \rangle^2\langle e_0,e_j \rangle
      =-\sigma\gamma\langle K,e_0\rangle^{-\sigma-1}\alpha_0^2\langle K,e_0 \rangle^2, 
\end{multline}
which is also well defined and continuous in $\calh^{++}_{e_0}$.
Plugging \eqref{eq:sup}, \eqref{eq:DV}, and \eqref{eq:trace} into  \eqref{eq:HJB}, we get
\begin{multline*}
\rho\gamma\frac{\langle K, e_0 \rangle^{1-\sigma}}{1-\sigma}=\langle K, \gamma\langle K,e_0\rangle^{-\sigma} \mathcal{L}e_0\rangle
+\int_0^{2\pi}\frac{\sigma}{1-\sigma}(\gamma N(x)\langle K,e_0\rangle^{-\sigma}e_0)^{-\frac{1-\sigma}{\sigma}}
f(x)^{\frac{1}{\sigma}}dx
\\
-\frac{1}{2}\sigma\gamma\langle K,e_0\rangle^{-\sigma-1}\alpha_0^2\langle K,e_0 \rangle^2.
\end{multline*}
We now use that $\mathcal{L}e_0=\lambda_0e_0$ and divide by
$\langle K,e_0 \rangle^{1-\sigma}>0$, finding
%After some manipulations, we find out the right value of $\gamma$ as follows.
%\begin{multline*}
%\rho\gamma\frac{\langle K,e_0 \rangle^{1-\sigma}}{1-\sigma}=\lambda_0\gamma\langle K,e_0\rangle^{1-\sigma}+\frac{\sigma}{1-\sigma}\langle K,e_0\rangle^{1-\sigma}\gamma^{-\frac{1-\sigma}{\sigma}}\int_0^{2\pi}(N(x)e_0(x))^{-\frac{1-\sigma}{\sigma}}f(x)^{\frac{1}{\sigma}}dx+\\
%-\frac{1}{2}\sigma\gamma\langle K,e_0\rangle^{-\sigma-1}\alpha_0^2\langle K,e_0 \rangle^2
%\end{multline*}
%
\begin{align*}
\frac{\rho}{1-\sigma}\gamma&=\lambda_0\gamma+\frac{\sigma}{1-\sigma}\gamma^{-\frac{1-\sigma}{\sigma}}\int_0^{2\pi}(N(x)e_0(x))^{-\frac{1-\sigma}{\sigma}}f(x)^{\frac{1}{\sigma}}dx
-\frac{1}{2}\sigma\gamma \alpha_0^2.
\end{align*}
Hence
%We then find the value of $\gamma$ with straightforward computations as follows
\[
\frac{\sigma}{1-\sigma}\gamma^{-\frac{1}{\sigma}}\int_0^{2\pi}(N(x)e_0(x))^{-\frac{1-\sigma}{\sigma}}f(x)^{\frac{1}{\sigma}}dx
=\frac{\rho}{1-\sigma}-\lambda_0+\frac{1}{2}\sigma \alpha_0^2,\
\]
\[
\gamma^{-\frac{1}{\sigma}}
=\left(\frac{\rho}{\sigma}-\lambda_0\frac{1-\sigma}{\sigma}+\frac{1-\sigma}{2}\alpha_0^2\right)\left(\int_0^{2\pi}(N(x)e_0(x))^{-\frac{1-\sigma}{\sigma}}f(x)^{\frac{1}{\sigma}}dx\right)^{-1},
\]
from which \eqref{eq:gamma} follows.
%\[
%\gamma
%=\left(\frac{\sigma}{\rho-\lambda_0(1-\sigma)+\frac{1}{2}\sigma(1-\sigma)\alpha^2_0} \right)^{\sigma}\left(\int_0^{2\pi}(N(x)e_0(x))^{-\frac{1-\sigma}{\sigma}}f(x)^{\frac{1}{\sigma}}dx\right)^{\sigma}
%\]
\end{proof}

\begin{remark}
\label{rm:boundarycond}
Note that the function $w$ is also defined on the boundary of the open set $\calh^{++}_{e_0}$. On this boundary $w$ satisfies the boundary conditions
$$
w(K)=0 \quad \hbox{if $\sigma \in (0,1)$};
\qquad
\lim_{K\to \partial \calh^{++}_{e_0}} w(K)=-\infty \quad
\hbox{if $\sigma \in (1,+\infty)$}.
$$
We will use this fact in what follows. %%%\hfill\qedo
\end{remark}

\begin{remark}
It is worth doing a comparison with the deterministic optimal problem. We recall that in the deterministic setting the solution of the HJB, $w_{det}(K)$, differs from $w(K)$ just for the value of $\gamma$, which is \[\gamma_{det}
=\left(\frac{\sigma}{\rho-\lambda_0(1-\sigma)
} \right)^\sigma\cdot\left(\int_0^{2\pi}
(N(x)e_0(x))^{-\frac{1-\sigma}{\sigma}}
f(x)^{\frac{1}{\sigma}}dx\right)^\sigma.\] By comparing $\gamma$ and $\gamma_{det}$, we get that $w(K)\leq w_{det}(K)$ for each $K\in \calh^{++}_{e_0}$. %%%\hfill\qedo
\end{remark}

\section{Optimal feedback control strategies}\label{sec:4}
The aim of this section is to provide the optimal controls (in both open-loop and closed-loop form) using a verification type argument.
The proof of this result is different when $\sigma\in(0,1)$ or $\sigma \in (1,\infty)$; hence we divide it into two different subsections. Here is the statement of this key result.

\begin{theorem}\label{teo:verification_teo}
Let Assumptions {\rm \ref{ass:state}} and {\rm \ref{ass:cost}} hold true.
For $\sigma \in(0,1)\cup (1,\infty)$ the value function of $\mathbf{({P_0})}$ is
\[V_{S,e_0}(K_0)=w(K_0)=\gamma\frac{\langle K_0,e_0 \rangle^{1-\sigma}}{1-\sigma},\]
where $\gamma$ satisfies identity \eqref{eq:gamma}.
For every initial datum $K_0\in \calh_{e_0}^{+}$
there exists a unique optimal control strategy $c^*_{K_0}$ given by (in open-loop form)
\begin{equation}\label{eq:optimal_control}
c^*(t)=\langle K_0,e_0\rangle e^{gt}e^{\alpha_0\beta_0(t)}\left(\frac{f}{\gamma N e_0}\right)^{1/\sigma},
\end{equation}
where $g:=\frac{\lambda_0-\rho}{\sigma}
-\frac12 \alpha_0^2(2-\sigma)$.
In closed-loop form we have
\begin{equation}\label{eq:optimal_control_feedback}
c^*_{K_0}(t)= G(K^{K_0}(t)),
\quad \hbox{where} \quad
G(K):=\langle K,e_0\rangle
\left(\frac{f}{\gamma N e_0}\right)^{1/\sigma}.
\end{equation}
Moreover, the corresponding optimal capital with initial condition $K_0$ is $K^{K_0,*}$, which is the unique solution to the linear stochastic PDE,
\begin{equation}\label{eq:optimal_pde}
dK(t)=\mathcal{L}K(t)-
\<K(t),e_0\>_H
\left(\frac{f}{\gamma N e_0}\right)^{1/\sigma}
N+B(K(t))dW(t),
\end{equation}
with initial condition $K(0)=K_0$.
\end{theorem}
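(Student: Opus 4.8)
The plan is a verification argument anchored on the classical solution $w$ of the HJB equation \eqref{eq:HJB} produced in Theorem \ref{th:HJB_particular solution}. The decisive structural feature is that $w$ depends on $K$ only through the scalar $Y(t):=\<K^{K_0,c}(t),e_0\>$: writing $w(K)=\phi(Y)$ with $\phi(y)=\gamma y^{1-\sigma}/(1-\sigma)$, the whole analysis collapses to one dimension. Pairing the state equation \eqref{eq:K_abstractstoch} with $e_0$, using $\call e_0=\lambda_0 e_0$ and Assumption \ref{ass:state}(iii) (so that only the $\beta_0$-mode of the noise survives, with constant coefficient $\alpha_0$), one obtains the scalar Itô equation
\begin{equation*}
dY(t)=\big[\lambda_0 Y(t)-\<c(t)N,e_0\>\big]\,dt+\alpha_0 Y(t)\,d\beta_0(t),
\end{equation*}
which avoids any direct use of the unbounded operator $\call$ in Itô's formula.

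First I would derive the fundamental (super)martingale inequality. Applying scalar Itô to $e^{-\rho t}\phi(Y(t))$ and inserting the HJB identity, the drift reduces, by the very definition of $H_{MAX}$, to $-e^{-\rho t}\mathcal{U}(c(t))$ plus a nonpositive defect that vanishes exactly when $c$ equals the pointwise Hamiltonian maximizer $G(K)$ of \eqref{eq:optimal_control_feedback}. Since $\phi'(y)=\gamma y^{-\sigma}$ is singular at $y=0$, I would localize along $\tau_n:=\inf\{t:Y(t)\le 1/n\}$ so that the martingale part $\int_0^{\cdot}e^{-\rho s}\gamma\alpha_0 Y(s)^{1-\sigma}\,d\beta_0(s)$ has zero mean; this yields, for every admissible $c$,
\begin{equation*}
w(K_0)\ge \E\!\int_0^{T\wedge\tau_n}\! e^{-\rho s}\mathcal{U}(c(s))\,ds+\E\big[e^{-\rho(T\wedge\tau_n)}\phi(Y(T\wedge\tau_n))\big].
\end{equation*}

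The two regimes of $\sigma$ diverge when passing to the limit, and this is the crux. For $\sigma\in(0,1)$ both $\phi$ and $\mathcal{U}$ are nonnegative, so the terminal term is simply dropped and monotone convergence removes the localization and lets $T\to\infty$, giving $w(K_0)\ge J_S(c)$, hence $w\ge V_{S,e_0}$. For $\sigma\in(1,\infty)$ one has $\phi\le 0$ and $\mathcal{U}\le 0$: the negative terminal term cannot be discarded, and positive-moment bounds such as those of Theorem \ref{teo:existence} do not control the negative moment $\E[Y(T)^{1-\sigma}]$ that now appears. I expect this upper bound for $\sigma>1$ to be the main obstacle; I would attack it by reducing to controls with $J_S(c)>-\infty$ (the inequality being trivial otherwise) and approximating them by controls that keep $Y$ bounded away from the boundary, combining a Fatou/uniform-integrability argument with the exponential rate furnished by Assumption \ref{ass:cost}(i) to show the terminal contribution does not degrade the bound.

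For optimality (the reverse inequality, valid in both regimes) I would insert the feedback $c^*=G(K^{K_0,*})$: it saturates the Hamiltonian, so the defect above is zero and the fundamental relation holds with equality. The closed-loop scalar equation then becomes the geometric Brownian motion $dY^*=(\lambda_0-\mu)Y^*\,dt+\alpha_0 Y^*\,d\beta_0$ with $\mu:=\gamma^{-1/\sigma}\!\int_0^{2\pi}(N e_0)^{(\sigma-1)/\sigma}f^{1/\sigma}\,dx$, whose explicit solution $Y^*(t)=\<K_0,e_0\>\,e^{gt+\alpha_0\beta_0(t)}$ (with $g$ as in the statement) yields the open-loop formula \eqref{eq:optimal_control}. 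Along this strictly positive path no localization is needed, and the transversality condition $\E[e^{-\rho T}\phi(Y^*(T))]\to 0$ follows from $\E[e^{a\beta_0(T)}]=e^{a^2T/2}$: the rate equals $\tfrac{\lambda_0(1-\sigma)-\rho}{\sigma}-\tfrac{1-\sigma}{2}\alpha_0^2$, which is strictly negative precisely by Assumption \ref{ass:cost}(i). This gives $w(K_0)=J_S(c^*)\le V_{S,e_0}(K_0)$, closing the identification $V_{S,e_0}=w$ and the optimality of $c^*$. Uniqueness of the optimizer follows from strict concavity of $\mathcal{U}$ (a unique pointwise maximizer), and well-posedness of the optimal state equation \eqref{eq:optimal_pde} from its linearity, as the feedback $G$ is a bounded linear operator, placing \eqref{eq:optimal_pde} within the scope of Theorem \ref{teo:existence}.
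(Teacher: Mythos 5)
For $\sigma\in(0,1)$ your argument is essentially the paper's: the same reduction to the scalar process $Y(t)=\langle K^{K_0,c}(t),e_0\rangle$, the same localization at $\tau_n=\inf\{t: Y(t)\le 1/n\}$, the same fundamental identity with a nonpositive Hamiltonian defect, the transversality limit from Assumption \ref{ass:cost}(i) (the paper's Lemma \ref{lem:limit_equality} and Proposition \ref{prop:verification_sigma_small}), and the same closed-loop/geometric-Brownian-motion computation yielding \eqref{eq:optimal_control} and admissibility. One small omission there: to pass from $\E\int_0^{t\wedge\tau}$ to $\E\int_0^{t}$ you need the paper's observation \eqref{eq:zero_boundary} that once $Y$ hits zero the only admissible continuation is $c\equiv 0$ (so the utility integrand vanishes after $\tau$); this is easy but should be said.

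The genuine gap is in the upper bound $V_{S,e_0}\le w$ for $\sigma\in(1,\infty)$. You correctly identify that the negative terminal term $\E[e^{-\rho(T\wedge\tau_n)}\phi(Y(T\wedge\tau_n))]$ cannot be discarded and that positive-moment estimates give no control over $\E[Y^{1-\sigma}]$, but your proposed remedy --- restricting to $J_S(c)>-\infty$, approximating by controls keeping $Y$ away from the boundary, and invoking Fatou/uniform integrability --- is only a plan, and it is not clear it can be executed: an admissible control may drive $Y$ arbitrarily close to $0$ on an event of small probability, making $Y(T)^{1-\sigma}$ blow up there, and finiteness of $J_S(c)$ does not by itself yield the uniform integrability you would need, nor is it evident that the approximating controls can be chosen so that both $J_S$ converges and the terminal terms are controlled. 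The paper avoids this entirely by a different mechanism: first it shows $V\ge w>-\infty$ on $\calh^{++}_{e_0}$ by evaluating $J_S$ at the explicit feedback candidate (Lemma \ref{lem:V_not_infinite}); then it proves that $V_{S,e_0}(K)$ depends only on $\langle K,e_0\rangle$ (via uniqueness for the scalar equation \eqref{eq:X}, which makes the admissible sets coincide) and is $(1-\sigma)$-homogeneous, forcing $V_{S,e_0}(K)=\eta\langle K,e_0\rangle^{1-\sigma}/(1-\sigma)$ with $\eta\ge 0$; finally, plugging this smooth form into the HJB equation leaves only $\eta\in\{0,\gamma\}$, and a separate argument using the dynamic programming principle (Proposition \ref{prop:DPP} and Lemma \ref{lem:value function_no0}) excludes $\eta=0$. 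If you want to keep a pure verification route for $\sigma>1$ you must actually supply the missing control on the terminal term; otherwise the homogeneity-plus-DPP argument is the way the statement is closed.
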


%%%\subsection{blabla1}%%%SKN restored original heading.
\subsection{\texorpdfstring{\boldmath}{}The case \texorpdfstring{$\sigma\in(0,1)$}{sigmasmall}}

\begin{lemma}\label{lem:limit_equality}
Let Assumptions {\rm \ref{ass:state} and \ref{ass:cost}} hold true, and let $\sigma \in(0,1)$. Let $w$ be defined as in \eqref{eq:HJB_particular solution}, and let $\tau$ be the first exit time of $K^{K_0,c}(t)$ from $\mathcal{H}_{e_0}^{++}$, i.e.,
\begin{equation}\label{eq:tau}
\tau:=\inf\{t>0\,\,:\,\, \langle K^{K_0,c}(t),e_0 \rangle =0\}.
\end{equation}
Then the following limit holds:
\[
\lim_{t\to\infty}\E \left[ e^{-\rho (t\wedge \tau)}w(K^{K_0,c}(t\wedge \tau))\right]=0
\]
for each $K_0 \in \calh^{+}_{e_0}$, $c\in {\mathcal{A}_{S,e_0}}(K_0)$.
\end{lemma}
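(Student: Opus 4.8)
The plan is to reduce the infinite-dimensional quantity to a one-dimensional linear SDE for the scalar $Y(t):=\langle K^{K_0,c}(t),e_0\rangle$, where everything becomes explicit. Projecting the mild solution of \eqref{eq:K_abstractstoch} onto the eigenvector $e_0\in\mathcal{D}(\mathcal{L})$, using $\mathcal{L}e_0=\lambda_0 e_0$ together with Assumption \ref{ass:state}(iii) (so that the noise in the $e_0$-direction carries the \emph{constant} coefficient $\alpha_0$, making the equation for $Y$ self-contained), I would obtain that $Y$ solves
\[
dY(t)=\bigl[\lambda_0 Y(t)-R(t)\bigr]\,dt+\alpha_0 Y(t)\,d\beta_0(t),\qquad R(t):=\langle c(t)N,e_0\rangle\ge 0,
\]
with $Y(0)=\langle K_0,e_0\rangle=:Y_0\ge 0$, where $\beta_0$ is the scalar Brownian motion driving the $e_0$-direction of $W$ (as in \eqref{eq:optimal_control}). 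First I would split according to $\{t<\tau\}$ and $\{t\ge\tau\}$: on $\{t\ge\tau\}$ one has $t\wedge\tau=\tau$ and, by continuity, $Y(\tau)=0$, so $w(K(\tau))=\tfrac{\gamma}{1-\sigma}Y(\tau)^{1-\sigma}=0$ because $1-\sigma>0$. Hence
\[
\E\!\left[e^{-\rho(t\wedge\tau)}w(K^{K_0,c}(t\wedge\tau))\right]=\frac{\gamma}{1-\sigma}\,\E\!\left[e^{-\rho t}Y(t)^{1-\sigma}\mathbf{1}_{\{t<\tau\}}\right],
\]
and it suffices to show the right-hand side tends to $0$.

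The key observation is that consumption enters only through $-R(t)\le 0$, so it can only decrease $Y$. Solving the linear SDE by variation of constants, with fundamental geometric-Brownian solution $\Phi(t)=\exp\!\bigl((\lambda_0-\tfrac12\alpha_0^2)t+\alpha_0\beta_0(t)\bigr)$, gives
\[
Y(t)=\Phi(t)\Bigl(Y_0-\int_0^t \Phi(s)^{-1}R(s)\,ds\Bigr)\le Y_0\,\Phi(t)=:\tilde Y(t),
\]
since $R,\Phi\ge 0$. Because $y\mapsto y^{1-\sigma}$ is increasing for $\sigma\in(0,1)$, on $\{t<\tau\}$ (where $Y(t)>0$) this yields $Y(t)^{1-\sigma}\le\tilde Y(t)^{1-\sigma}$, and therefore
\[
\E\!\left[e^{-\rho t}Y(t)^{1-\sigma}\mathbf{1}_{\{t<\tau\}}\right]\le \E\!\left[e^{-\rho t}\tilde Y(t)^{1-\sigma}\right].
\]
Using the explicit variation-of-constants formula rather than a comparison theorem is what keeps this step clean and sidesteps any martingale/integrability technicalities.

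Finally I would compute the geometric-Brownian moment: since $\tilde Y(t)^{1-\sigma}=Y_0^{1-\sigma}\exp\!\bigl((1-\sigma)(\lambda_0-\tfrac12\alpha_0^2)t+(1-\sigma)\alpha_0\beta_0(t)\bigr)$ and $\E[\exp((1-\sigma)\alpha_0\beta_0(t))]=\exp(\tfrac12(1-\sigma)^2\alpha_0^2 t)$, a short computation gives
\[
\E\!\left[e^{-\rho t}\tilde Y(t)^{1-\sigma}\right]=Y_0^{1-\sigma}\,e^{-\mu t},\qquad \mu:=\rho-\lambda_0(1-\sigma)+\tfrac12\sigma(1-\sigma)\alpha_0^2.
\]
By Assumption \ref{ass:cost}(i) we have $\mu>0$, so this bound vanishes as $t\to\infty$; since the original expectation is nonnegative ($w\ge 0$ for $\sigma\in(0,1)$), the squeeze yields the claimed limit (the case $Y_0=0$ being trivial, as then $Y\equiv 0$ and $\tau=0$). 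I expect the only genuinely delicate point to be the rigorous derivation of the scalar SDE for $Y$ from the mild solution of the SPDE — in particular justifying that the $e_0$-projection of the stochastic convolution is the Itô integral $\int_0^t\alpha_0 Y\,d\beta_0$ — for which I would invoke the standard projection/Itô formula for mild solutions together with Assumption \ref{ass:state}(iii).
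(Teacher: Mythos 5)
Your proof is correct, and it reaches exactly the paper's key estimate \eqref{eq:limit_0} — the bound $\E\bigl[e^{-\rho(t\wedge\tau)}w(K^{K_0,c}(t\wedge\tau))\bigr]\le w(K_0)\,e^{-\mu t}$ with $\mu=\rho-\lambda_0(1-\sigma)+\tfrac12\sigma(1-\sigma)\alpha_0^2>0$ by Assumption \ref{ass:cost}(i) — but by a genuinely different technical route. The paper also reduces to the scalar process $X_0(t)=\langle K^{K_0,c}(t),e_0\rangle$ and exploits the sign of the consumption term, but it then applies It\^o's formula to $e^{-\rho t}F_\sigma(X_0(t))$ with $F_\sigma(x)=\gamma x^{1-\sigma}/(1-\sigma)$, localizes with the stopping times $\tau_n=\inf\{t:X_0(t)\le 1/n\}$ so that the stochastic integral is a true martingale, passes to the limit $n\to\infty$ by dominated convergence, and closes the estimate with Gronwall's lemma; it also needs the separate observation \eqref{eq:zero_boundary} that the process is frozen at zero after $\tau$. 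You instead dominate $X_0$ pathwise by the explicit geometric Brownian motion $Y_0\Phi(t)$ via variation of constants (using $R\ge 0$), use monotonicity of $x\mapsto x^{1-\sigma}$ for $\sigma\in(0,1)$, and compute the exponential moment directly; the event $\{t\ge\tau\}$ contributes nothing simply because $Y(\tau)=0$ and $0^{1-\sigma}=0$. Your argument buys a cleaner proof that sidesteps the localization, the martingale verification, and Gronwall entirely, at the price of relying on the explicit solvability of the linear scalar SDE (which the paper in any case invokes, citing Kloeden--Platen) and of being tied to this specific power structure; the paper's It\^o/Gronwall scheme is the one that generalizes to situations where no closed-form comparison process is available. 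The one point you correctly flag as delicate — rigorously deriving the scalar SDE for $\langle K^{K_0,c}(t),e_0\rangle$ from the mild solution, which requires Assumption \ref{ass:state}(iii) so that $\alpha_0$ is constant — is treated at the same level of detail (i.e., essentially asserted) in the paper itself, so your proposal is on equal footing there.
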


\begin{proof}
Let us consider $K_0 \in \partial \mathcal{H}^+_{e_0}$, i.e., $\langle K_0,e_0\rangle=0$. If $K_0$ lies on the boundary, then the only admissible control is the null one. Indeed, denoting $X_0(t):=\langle K^{K_0,c}(t),e_0\rangle$, we see that the process $X_0$ is the solution of the following SDE:
\begin{equation}\label{eq:limit_equality}
\begin{cases}
dX_0(t)=\lambda_0X_0(t)dt-\langle c(t)N,e_0\rangle dt+\alpha_0X_0(t)d\beta_0(t),\quad t>0,\\
X_0(0)=0.
\end{cases}
\end{equation}
Then, by section 4 in \cite{kloeden2013numerical}, we can derive an explicit solution for this equation,
\[X_0(t)=-\int_0^t\Psi(t)\Psi(s)^{-1}\langle c(s)N,e_0\rangle ds\]
with $\Psi(t)=\exp\big[\big(\lambda_0-\frac{\alpha_0^2}{2}\big)t+\alpha_0\beta_0(t)\big]$. Since $\Psi(t)\Psi(s)^{-1}, N,e_0>0$, the only control $c$ that makes $X_0$ nonnegative is the null control, 
\begin{equation}\label{eq:zero_boundary_boundary}
c(t)\equiv 0,\quad X_0(t)\equiv 0,
\end{equation}
 $t$-a.e., $\PP$-a.s. In conclusion, if $K_0\in\partial\mathcal{H}_{e_0}^{++}$, then the statement of the lemma is trivially verified.  
Let $K_0 \in \calh^{++}_{e_0}$, $c\in {\mathcal{A}_{S,e_0}}(K_0)$.
Notice that the only admissible control, after which $\langle K^{K_0,c}(t),e_0\rangle$ touches the boundary, is the null control. Indeed, if $\tau(\omega)<\infty$, then for $t>\tau(\omega)$,
\begin{align*}
X_0(t)&=-\tilde{\Psi}(t,\tau(\omega),\omega)\int_0^\tau\Psi(\tau(\omega),\omega)\Psi(s,\omega)^{-1}\langle c(s,\omega)N,e_0\rangle ds\\&-\int_{\tau(\omega)}^t\Psi(t,\omega)\Psi(s,\omega)^{-1}\langle c(s,\omega)N,e_0\rangle ds\\
&=-\tilde{\Psi}(t,\tau(\omega),\omega)X_0(\tau)-\int_{\tau(\omega)}^t\Psi(t,\omega)\Psi(s,\omega)^{-1}\langle c(s,\omega)N,e_0\rangle ds\\
&=-\int_{\tau(\omega)}^t\Psi(t,\omega)\Psi(s,\omega)^{-1}\langle c(s,\omega)N,e_0\rangle ds
\end{align*}
%\begin{align*}
%X_0(t)&=-\tilde{\Psi}(t,\tau(\omega),\omega)\int_0^\tau\Psi(\tau(\omega),\omega)\Psi(s,\omega)^{-1}\langle c(s,\omega)N,e_0\rangle ds+\\
%&\hspace{4cm}-\int_{\tau(\omega)}^t\Psi(t,\omega)\Psi(s,\omega)^{-1}\langle c(s,\omega)N,e_0\rangle ds\\
%&=-\tilde{\Psi}(t,\tau(\omega),\omega)X_0(\tau)-\int_{\tau(\omega)}^t\Psi(t,\omega)\Psi(s,\omega)^{-1}\langle c(s,\omega)N,e_0\rangle ds\\
%&=-\int_{\tau(\omega)}^t\Psi(t,\omega)\Psi(s,\omega)^{-1}\langle c(s,\omega)N,e_0\rangle ds
%\end{align*}
with $\tilde{\Psi}(t,\tau(\omega),\omega):=e^{[(\lambda_0-\frac{\alpha_0^2}{2})(t-\tau)+\alpha_0(\beta_0(t)-\beta(\tau))]}$. Then by repeating the above observation on the integral, we conclude that 
\begin{equation}\label{eq:zero_boundary}
c(t)\mathds{1}_{\{t>\tau\}}\equiv 0,\quad X_0(t)\mathds{1}_{\{t\geq\tau\}}\equiv 0,
\end{equation} $dt\times \PP$-a.e. in $\mathbb{R}^+\times \Omega$. 
We introduce the family of stopping times $\tau_n$, such that $\tau_n\nearrow \tau$,
\begin{equation}\label{eq:tau_n}
\tau_n=\inf\bigg\{t\geq0\,\,\text{s.t.}\,\, X_0(t)\leq \frac{1}{n}\bigg\},
\end{equation}
and the function $F_\sigma:\R^+\to\R$, $F_\sigma(x)=\gamma\frac{x^{1-\sigma}}{1-\sigma}$.
By applying the Ito formula to $e^{-\rho t}F_\sigma\left(X_0(t)\right)$ up to the stopping time $\tau_n$, we have
\begin{align*}
d\left(e^{-\rho t}F_\sigma (X_0(t))\right)
&=-\rho e^{-\rho t}F_\sigma\left(X_0(t)\right) dt+e^{-\rho t}F'(X_0(t))dX_0(t)\\
&\hspace{3cm}+\frac12\alpha_0^2X_0^2(t)
e^{-\rho t}F''_\sigma(X_0(t))dt\\
&=\gamma\lambda_0e^{-\rho t}X_0(t)^{1-\sigma}dt-\gamma\langle c(t)N,e_0\rangle e^{-\rho t}X_0(t)^{-\sigma}dt\\
&\hspace{2cm}+\gamma\alpha_0e^{-\rho t}X_0^{1-\sigma}d\beta_0(t)-\frac12\sigma\gamma\alpha_0^2e^{-\rho t}X^{1-\sigma}_0(t)dt.
\end{align*}
Since $c\in{\mathcal{A}_{S,e_0}}(K_0)$, the term involving the control is positive; hence, we write the above in its integral formulation and take the average on both sides. Since the stochastic integral, stopped at the time $\tau_n$, is a martingale, then its average is zero:
\begin{multline*}
\E\left[e^{-\rho (t\wedge \tau_n)}F_\sigma (X_0(t\wedge \tau_n))\right]
\leq\E\left[F_\sigma (X_0(0))\right]
\\+\E\left[\int_0^{t\wedge \tau_n}\left(-\rho+\left(\lambda_0-\frac12\sigma\alpha_0^2\right)(1-\sigma)\right)
e^{-\rho s}F_\sigma(X_0(s))ds\right].
\end{multline*}
Now we let $N\to \infty$. By the dominated convergence theorem, the two averages on the left-hand side  and the right-hand side converge. Then,
\begin{multline*}
\E\left[e^{-\rho (t\wedge \tau)}F_\sigma (X_0(t\wedge \tau))\right]
\leq\E\left[F_\sigma (X_0(0))\right]
\\
+\E\left[\int_0^{t\wedge \tau}\left(-\rho+\left(\lambda_0-\frac12\sigma\alpha_0^2\right)(1-\sigma)\right)
e^{-\rho s}F_\sigma(X_0(s))ds\right].
\end{multline*}
For \eqref{eq:zero_boundary}, we can rewrite the second term on the right-hand side as an integral on the entire interval $[0,t]$. Then,
\begin{multline*}
\E\left[e^{-\rho (t\wedge \tau)}F_\sigma (X_0(t\wedge \tau_n))\right]
\leq\E\left[F_\sigma (X_0(0))\right]
\\
+\int_0^{t}\left(-\rho+\left(\lambda_0-\frac12\sigma\alpha_0^2\right)(1-\sigma)\right)
\E\left[e^{-\rho (s\wedge \tau)}F_\sigma(X_0(s\wedge \tau))\right] ds.
\end{multline*}
By Gronwall's lemma (see Proposition D.29 in \cite{fabbri2017stochastic}), we get that
\begin{equation}\label{eq:limit_0}
0\leq \E\left[e^{-\rho (t\wedge \tau)}F_\sigma (X_0(t))\right]\leq F_\sigma (X_0(0))e^{-\left[\rho-\lambda_0(1-\sigma)+\frac12\alpha_0^2\sigma(1-\sigma)
\right] t}.
\end{equation}
The result then follows from Assumption \ref{ass:cost}(i).
\end{proof}

\begin{proposition}\label{prop:verification_sigma_small}
Let Assumptions {\rm \ref{ass:state}--\ref{ass:cost}} hold true, and let $\sigma \in(0,1)$.
Let $w$ be defined as in \eqref{eq:HJB_particular solution}. Let $\tau$ be the first exit time from $\mathcal{H}_{e_0}^{++}$. 
The following fundamental identity holds: for each $K_0\in\mathcal{H}^{++}_{e_0}$, $c\in {\mathcal{A}_{S,e_0}}(K_0)$,
\begin{equation}\label{eq:fundid}
    w(K_0)=J_S(c)
    +\E \left[\int_0^\tau e^{-\rho s}\left[
    H_{MAX}(\nabla w(K))-
     H_{CV}\left(\nabla w(K(s));c(s)\right)\right] ds\right].
\end{equation}
\end{proposition}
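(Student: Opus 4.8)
The plan is to derive \eqref{eq:fundid} by applying Itô's formula to the discounted value process $e^{-\rho t}w(K^{K_0,c}(t))$, localized so as to remain inside the open set $\calh^{++}_{e_0}$ where $w$ is a classical solution of the HJB equation \eqref{eq:HJB}. Since $w(K)=F_\sigma(\langle K,e_0\rangle)$ depends on $K$ only through the scalar $X_0(t):=\langle K^{K_0,c}(t),e_0\rangle$, and $X_0$ solves the one-dimensional SDE \eqref{eq:limit_equality}, the entire computation reduces to the scalar Itô expansion already carried out in the proof of Lemma \ref{lem:limit_equality}. First I would fix $K_0\in\calh^{++}_{e_0}$ and $c\in\mathcal{A}_{S,e_0}(K_0)$, introduce the stopping times $\tau_n$ of \eqref{eq:tau_n} (so that $X_0\ge 1/n$ on $[0,\tau_n]$ and $w$ stays $C^2$ there), and write the differential of $e^{-\rho t}F_\sigma(X_0(t))$ as in Lemma \ref{lem:limit_equality}, this time \emph{keeping} the control term instead of discarding it.

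The key algebraic step is to recognize the drift through the HJB identity. Using $\call e_0=\lambda_0 e_0$ and the trace computation \eqref{eq:trace}, the deterministic part of the differential (all terms except the control term and the $d\beta_0$ martingale) equals $-e^{-\rho t}H_{MAX}(Dw(K(t)))\,dt$; this is exactly the statement that $w$ solves \eqref{eq:HJB}, read backwards. The remaining control term is $-e^{-\rho t}\langle c(t)N,Dw(K(t))\rangle\,dt$, and since $H_{CV}(Dw;c)=\mathcal{U}(c)-\langle cN,Dw\rangle$ this equals $e^{-\rho t}\bigl(H_{CV}(Dw;c)-\mathcal{U}(c)\bigr)\,dt$. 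Integrating on $[0,t\wedge\tau_n]$ and taking expectations — the stopped stochastic integral is a genuine martingale with zero mean, because $X_0^{1-\sigma}$ is bounded on $[0,\tau_n]$ by the lower bound $1/n$ together with the moment estimate of Theorem \ref{teo:existence} — yields the pre-limit identity
\[
w(K_0)=\E\!\left[e^{-\rho(t\wedge\tau_n)}w(K(t\wedge\tau_n))\right]+\E\!\left[\int_0^{t\wedge\tau_n}\!e^{-\rho s}\mathcal{U}(c(s))\,ds\right]+\E\!\left[\int_0^{t\wedge\tau_n}\!e^{-\rho s}\bigl(H_{MAX}(Dw)-H_{CV}(Dw;c)\bigr)\,ds\right].
\]

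It then remains to pass to the limit, first $n\to\infty$ and then $t\to\infty$. The boundary term $\E[e^{-\rho(t\wedge\tau)}w(K(t\wedge\tau))]$ tends to $0$ by Lemma \ref{lem:limit_equality}. Both integral terms are monotone: for $\sigma\in(0,1)$ one has $\mathcal{U}\ge 0$, and $H_{MAX}(Dw)-H_{CV}(Dw;c)\ge 0$ since $H_{MAX}$ is a supremum over admissible $c$; hence monotone convergence lets me replace $t\wedge\tau_n$ by $\tau$ in both integrals. Finally, because \eqref{eq:zero_boundary} forces $c\equiv 0$ (so $\mathcal{U}(c)=0$) on $\{s>\tau\}$, the utility integral up to $\tau$ coincides with $J_S(c)=\E\int_0^\infty e^{-\rho s}\mathcal{U}(c)\,ds$, and the third term becomes the integral up to $\tau$ in \eqref{eq:fundid}. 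The main obstacle is the rigorous justification of these limit passages: one must verify that the localization keeps the process in the interior so that the classical Itô formula applies, that the stopped stochastic integral truly has mean zero rather than being merely a local martingale, and that the sign structure available for $\sigma\in(0,1)$ is precisely what permits monotone convergence on both integral terms simultaneously — this is where the restriction $\sigma<1$ enters decisively, and why the case $\sigma>1$ must be handled by a separate argument.
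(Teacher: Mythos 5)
Your proposal is correct and follows essentially the same route as the paper's proof: Itô's formula applied to $e^{-\rho t}w(K^{K_0,c}(t))$ localized by the stopping times $\tau_n$ of \eqref{eq:tau_n}, the HJB equation used to identify the drift, the stopped stochastic integral killed in expectation, and then $n\to\infty$ and $t\to\infty$ handled via monotone convergence (using the sign of $H_{MAX}-H_{CV}$ and of $\mathcal{U}$ for $\sigma\in(0,1)$), Lemma \ref{lem:limit_equality} for the boundary term, and \eqref{eq:zero_boundary} to identify the utility integral with $J_S(c)$. The only cosmetic difference is that you phrase the Itô expansion as a scalar computation on $X_0(t)=\langle K(t),e_0\rangle$ rather than as the infinite-dimensional formula plus the trace identity \eqref{eq:trace}, which amounts to the same thing since $w$ depends on $K$ only through $\langle K,e_0\rangle$.
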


\begin{proof}
We first apply the Ito formula to $\Phi(t):=e^{-\rho t}w(K^{K_0,c}(t))$  up to the stopping time $\tau_n$, defined in \eqref{eq:tau_n}. We get
\begin{multline*}
    d \Phi(t)=-\rho e^{-\rho t}w(K^{K_0,c}(t))dt+e^{-\rho t}\langle \nabla w(K^{K_0,c}(t)),\mathcal{L}K(T)\rangle dt\\
    -e^{-\rho t}\langle \nabla w(K^{K_0,c}(t)),c(t)N\rangle dt
    +e^{-\rho t}\langle \nabla w(K^{K_0,c}(t)),B(K^{K_0,c}(t))dW_t\rangle \\
    +e^{-\rho t}\frac12\text{Tr}[D^2w(K(t))B(K^{K_0,c}(t))B(K^{K_0,c}(t))^*].
\end{multline*}
Given that $w$ is a classical solution of \eqref{eq:HJB},
\begin{multline*}
    d \Phi(t)=-e^{-\rho t}\Big(\langle \nabla w(K^{K_0,c}(t)),c(t)N\rangle dt
    +\langle \nabla w(K^{K_0,c}(t)),B(K^{K_0,c}(t))dW_t\rangle \\-\sup_{c\in H^+}\{\mathcal{U}(c(t))-\langle c(t)N,\nabla w(K^{K_0,c}(t))\rangle\}\Big).
\end{multline*}
%\begin{multline*}
%    d \Phi(t)=-e^{-\rho t}\left(\langle \nabla w(K^{K_0,c}(t)),c(t)N\rangle dt
%    +\langle \nabla w(K^{K_0,c}(t)),B(K^{K_0,c}(t))dW_t\rangle +\right.\\-\left.\sup_{c\in H^+}\{\mathcal{U}(c(t))-\langle c(t)N,\nabla w(K^{K_0,c}(t))\rangle\}\right).
%\end{multline*}
We write this in the integral formulation adding and subtracting $\int_0^{t\wedge \tau_n} e^{-\rho s}\mathcal{U}(c(s))ds$:
\begin{align*}
    &e^{-\rho (t\wedge \tau_n)}w(K^{K_0,c}(t\wedge \tau_n))-w(K_0)\\
    &=-\int_0^{t\wedge \tau_n}e^{-\rho s}\mathcal{U}(c(s))ds
  +\int_0^{t\wedge \tau_n} e^{-\rho s}\left(\mathcal{U}(c(s))-\langle \nabla w(K(s)),c(s)N\rangle\right) ds
 \\
    & -\int_0^{t\wedge \tau_n}e^{-\rho s}\sup_{c\in H^+}\{\mathcal{U}(c(s))-\langle c(s)N,\nabla w(K^{K_0,c}(s))\rangle\}ds\\
    & +\int_0^{t\wedge \tau_n}e^{-\rho s}\langle \nabla w(K^{K_0,c}(s)),B(K^{K_0,c}(s))dW_s\rangle.
\end{align*}
%\begin{align*}
%    e^{-\rho (t\wedge \tau_n)}w(K^{K_0,c}(t\wedge \tau_n))-w(K_0)&=-\int_0^{t\wedge \tau_n}e^{-\rho s}\mathcal{U}(c(s))ds+\\
%    &+\int_0^{t\wedge \tau_n} e^{-\rho s}\left(\mathcal{U}(c(s))-\langle \nabla w(K(s)),c(s)N\rangle\right) ds
% \\
%     &-\int_0^{t\wedge \tau_n}e^{-\rho s}\sup_{c\in H^+}\{\mathcal{U}(c(s))-\langle c(s)N,\nabla w(K^{K_0,c}(s))\rangle\}ds+\\
%     &+\int_0^{t\wedge \tau_n}e^{-\rho s}\langle \nabla w(K^{K_0,c}(s)),B(K^{K_0,c}(s))dW_s\rangle.
%\end{align*}
We now take the average. Since the integral with respect to the Brownian motion up to the time $\tau_n$ is a martingale, and recalling the definition of
$\mathcal{U}^*$, we get that
%(\alpha)=\sup_{c\in H^+}\{\mathcal{U}(c)-\langle cN,\alpha\rangle\}$, getting
\begin{multline*}
    \E \left[ e^{-\rho (t\wedge \tau_n)}w(K^{K_0,c}(t\wedge \tau_n))\right]-w(K_0)=
     -\E\left[\int_0^{t\wedge \tau_n}e^{-\rho s}\mathcal{U}(c(s))ds\right]
\\
    +\E \left[\int_0^{t\wedge \tau_n} e^{-\rho s}\left(\mathcal{U}(c(s))ds-\langle \nabla w(K^{K_0,c}(s)),c(s)N\rangle\right)
     -\mathcal{U}^*(\nabla w(K^{K_0,c}(s)))ds\right].
\end{multline*}
We let $n\to \infty$. By dominated convergence, the average on the left-hand side converges. The integrand on the right-hand side is negative; then by the monotone convergence theorem, we obtain that also the average on the right-hand side converges. Thus the following equality holds:
\begin{multline*}
    \E \left[ e^{-\rho (t\wedge \tau)}w(K^{K_0,c}(t\wedge \tau))\right]-w(K_0)=
     -\E\left[\int_0^{t\wedge \tau}e^{-\rho s}\mathcal{U}(c(s))ds\right]
\\
    +\E \left[\int_0^{t\wedge \tau} e^{-\rho s}\left(\mathcal{U}(c(s))ds-\langle \nabla w(K^{K_0,c}(s)),c(s)N\rangle\right)
     -\mathcal{U}^*(\nabla w(K^{K_0,c}(s)))ds\right].
\end{multline*}
%\begin{multline*}
%    w(K_0)- e^{-\rho t}\E \left[w(K^{K_0,c}(t))\right]=-\E \left[\int_0^t e^{-\rho s}\left(\mathcal{U}(c(s))ds-\langle \nabla w(K^{K_0,c}(s)),c(s)N\rangle\right) ds
%     -\int_0^te^{-\rho s}\mathcal{U}(c(s))ds\right.+\\
%     \left.-\int_0^te^{-\rho s}\sup_{c\in H^+}\{\mathcal{U}(c(s))-\langle c(s)N,\nabla w(K^{K_0,c}(s))\rangle\}ds\right].
%\end{multline*}
From \eqref{eq:zero_boundary}, we can rewrite the first term on the right-hand side, neglecting the dependence on the stopping time $\tau$, as
\[\E\left[\int_0^{t\wedge \tau}e^{-\rho s}\mathcal{U}(c(s))ds\right]=\E\left[\int_0^{t}e^{-\rho s}\mathcal{U}(c(s))ds\right].\]
We now pass to the limit for $t\to \infty$. To the left-hand side we can apply Lemma \ref{lem:limit_equality}. Since both integrands on the right-hand side have constant sign, we can apply
monotone convergence. Hence we get
\begin{multline}\label{eq:last_step}
    w(K_0)
    =\E\left[\int_0^\infty e^{-\rho s}\mathcal{U}(c(s))ds\right]
\\
+\E \left[\int_0^\tau e^{-\rho s}\left(
    \mathcal{U}^*(\nabla w(K^{K_0,c}(s)))-\left(\mathcal{U}(c(s))-\langle \nabla w(K^{K_0,c}(s)),c(s)N\rangle\right)\right) ds\right],
    \notag
\end{multline}
%Let us see how to deal with the lhs.
%Combining Lemma \ref{lem:limit_equality} with \eqref{eq:last_step}, we get the fundamental identity
%\[
%    w(K_0)=J(c)
%    +\E \left[\int_0^\infty e^{-\rho s}\left(\mathcal{U}^*(\nabla w(K^{K_0,c}(s)))-\left(\mathcal{U}(c(s))-\langle \nabla w(K^{K_0,c}(s)),c(s)N\rangle\right)\right) ds\right]
%\]
which gives the claim by the definition of $J_S$
and \eqref{eq:HCV}--\eqref{eq:HMAX}.
\end{proof}

\begin{proof}[Proof of Theorem {\rm \ref{teo:verification_teo}}\textup{:} The case $\sigma \in(0,1)$]

If $K_0\in\partial\mathcal{H}^{++}_{e_0}$, then from \eqref{eq:zero_boundary_boundary} and
$J_S(c),w(K_0)\equiv 0$ the statement is trivially verified.

From the fundamental identity of Proposition \ref{prop:verification_sigma_small}, since the second term of the right-hand side is positive, we have
\[w(K_0)\geq J_S(c) \qquad \hbox{for each $c\in {\mathcal{A}}_{S,e_0}(K_0)$.}
\]
Now
%we write $c^*_{K_0}$ such that $w(K_0)=J_S(c^*_{K_0})$.
notice that if we find a control strategy $c_{K_0}\in {\mathcal{A}}_{S,e_0}(K_0)$ such that, calling
$K^{K_0,c_{K_0}}$ the associated state trajectory, we have
\begin{equation}
\label{eq:cstarK0}
c_{K_0}(t)=\left(\frac{N\nabla w(K^{K_0}(t))}{f}\right)^{-\frac{1}{\sigma}}=\langle K^{K_0}(t),e_0\rangle\left(\gamma \frac{N e_0}{f}\right)^{-1/\sigma},
\end{equation}
then by the fundamental identity \eqref{eq:fundid}, it must hold that $w(K_0)=J_S(c_{K_0})$.
If we plug \eqref{eq:cstarK0} into the state equation \eqref{eq:K_abstractstoch}, we obtain the closed-loop equation
 \begin{equation}\label{eq:K_optimal_abstract}
dK(t)=\mathcal{L}K(t)-\langle K(t),e_0\rangle\left(\gamma \frac{N e_0}{f}\right)^{-1/\sigma}N+B(K(t))dW(t).
\end{equation}
From Assumption \ref{ass:cost}(ii), $(\gamma \frac{N e_0}{f})^{-1/\sigma}N\in \mathcal{H}$; then Assumptions \ref{ass:state} are verified, so Theorem \ref{teo:existence} holds and then there exists a unique mild solution for \eqref{eq:K_optimal_abstract}, which we call $K^{K_0,*}(t)$.
Hence the above argument implies that the couple
$(K^{K_0,*}, c^*_{K_0})$, where
\[c^*_{K_0}(t)=\left(\frac{N\nabla w(K^{K_0,*}(t))}{f}\right)^{-\frac{1}{\sigma}},
\]
is optimal once we prove that it is admissible.
We prove \eqref{eq:optimal_control}, which also implies admissibility.
Taking the inner product with $e_0$ in the equation for $K^{K_0,*}$, we have
\begin{multline}\label{eq:dynamic_optimalpath_e0}
\langle K^{K_0,*}(t),e_0\rangle=\langle K_0,e_0\rangle+\int_0^t\lambda_0\langle K^{K_0,*}(s),e_0\rangle ds
\\
-\int_0^t \langle K^{K_0,*}(s),e_0\rangle\langle \gamma^{-\frac{1}{\sigma}}f^{\frac{1}{\sigma}},
(Ne_0)^{-\frac{1-\sigma}{\sigma}}\rangle ds
+\int_0^t\alpha_0\langle K^{K_0,*}(s),e_0\rangle d\beta_0(s).
\end{multline}
Then, we can say that $\langle K^{K_0,*}(t),e_0\rangle$ is a geometric Brownian motion, and so it can be written as
\begin{equation}\label{eq:dynamic_optimalpath_e0_exp}
\langle K^{K_0,*}(t),e_0\rangle=\langle K(0),e_0\rangle e^{\tilde{g}t+\alpha_0\beta_0(t)-\frac{\alpha_0^2}{2}t},
\end{equation}
where $\tilde{g}=(\lambda_0-\langle \gamma^{-\frac{1}{\sigma}}f^{\frac{1}{\sigma}},(Ne_0)^{-\frac{1-\sigma}{\sigma}}\rangle )$.
Hence, if $K_0\in \calh^+_{e_0}$, then $K^*(t)\in \calh^+_{e_0}$, so the control is admissible, $c^*\in {\mathcal{A}}_{S,e_0}(K_0)$.
%Plugging $c^*$ in the left hand side of fundamental identity, we make null the second term and then
%\[w(K)=\tilde{V}(K)=J(c^*).\]
Notice that from \eqref{eq:gamma}, we get
\begin{equation}\label{eq:control_optimal_term}
\langle \gamma^{-\frac{1}{\sigma}}f^{\frac{1}{\sigma}},(Ne_0)^{-\frac{1-\sigma}{\sigma}}\rangle -\lambda_0=\frac{\rho-\lambda_0+\frac{1}{2}\sigma (1-\sigma)\alpha_0^2}{\sigma}.
\end{equation}
Then, the exponential (deterministic) rate of the optimal control strategy is
$g=\tilde{g}-\frac{\alpha_0^2}{2}=-\frac{\rho-\lambda_0+\frac{1}{2}\sigma (1-\sigma)\alpha_0^2+\frac{1}{2}\alpha_0^2\sigma}{\sigma}.$
In conclusion, combining \eqref{eq:dynamic_optimalpath_e0_exp} and \eqref{eq:optimal_control_feedback}, we obtain \eqref{eq:optimal_control}.

Concerning uniqueness we observe that if $\bar c_{K_0}$ is another optimal strategy at $K_0$, then the integrand in \eqref{eq:fundid} must be zero $s$-a.e., $\PP$-a.s.;
this in turn implies that $\bar c_{K_0}$ and the associated state trajectory $K^{K_0,\bar c_{K_0}}$ must satisfy
\eqref{eq:cstarK0}--\eqref{eq:K_optimal_abstract}, i.e., they are equal to $(K^{K_0,*}, c^*_{K_0})$ by the uniqueness of solution of \eqref{eq:K_optimal_abstract}.
%In this way, $c^*$ is given as a feedback function of the current optimal state trajectory $K^*$. Actually, we can rewrite it, following the argument below.
\end{proof}

%%%\subsection{blabla2}%%%SKN restored original heading.
\subsection{\texorpdfstring{\boldmath}{}The case \texorpdfstring{$\sigma\in(1,\infty)$}{sigmabig}}

We start by recalling a fundamental result related to our  problem ($P_0$); see Theorem 2.31 in \cite{fabbri2017stochastic} (or Theorem 3.70 in \cite{fabbri2017stochastic} for an improved version).

\begin{proposition}[dynamic programming principle]\label{prop:DPP}%
Let Assumptions {\rm \ref{ass:state}--\ref{ass:cost}} hold.
The value function $V$ satisfies the following: for each $0<t<\infty$,
\begin{equation}\label{eq:DPP}
V_{S,e_0}(K_0)=\sup_{c\in {\mathcal{A}}_{S,e_0}(K_0)}\left[\E\left[ \int_0^te^{-\rho s}\mathcal{U}(c(s))ds\right]+e^{-\rho t}V_{S,e_0}(K^{K_0,c}(t))\right].
\end{equation}
\end{proposition}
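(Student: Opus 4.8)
The plan is to establish the two inequalities whose combination gives \eqref{eq:DPP}, exploiting two structural features: the controlled state equation \eqref{eq:K_abstractstoch} is autonomous (time-homogeneous), and the running payoff carries the multiplicative discount $e^{-\rho s}$. Together these mean that shifting time by $t$ converts the tail of $J_S$ into a fresh copy of the \emph{same} problem started at $K^{K_0,c}(t)$. This is precisely the setting in which the general dynamic programming principle applies, so in the paper it suffices to verify the standing hypotheses of \cite[Theorem 2.31]{fabbri2017stochastic} (equivalently \cite[Theorem 3.70]{fabbri2017stochastic}); below I sketch the self-contained argument. Throughout I read the second term of \eqref{eq:DPP} under the expectation, and I write $R(K_0)$ for the right-hand side.

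For the inequality $V_{S,e_0}(K_0)\le R(K_0)$, I would fix $c\in\mathcal{A}_{S,e_0}(K_0)$ and split $J_S(c)$ into the integral over $[0,t]$ plus the integral over $[t,\infty)$. In the tail term I substitute $s=t+u$, factor out $e^{-\rho t}$, and condition on $\calf_t$. Since $\widetilde W(u):=W(t+u)-W(t)$ is a cylindrical Wiener process independent of $\calf_t$ and the dynamics are time-homogeneous, the shifted control $u\mapsto c(t+u)$ is, conditionally on $\calf_t$, admissible for the problem started at $K^{K_0,c}(t)$; hence the inner conditional expectation is at most $V_{S,e_0}(K^{K_0,c}(t))$. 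This gives
\[
J_S(c)\le \E\left[\int_0^t e^{-\rho s}\mathcal{U}(c(s))\,ds+e^{-\rho t}V_{S,e_0}(K^{K_0,c}(t))\right]\le R(K_0),
\]
and taking the supremum over $c$ yields the claim.

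For the reverse inequality $V_{S,e_0}(K_0)\ge R(K_0)$, I would fix $c\in\mathcal{A}_{S,e_0}(K_0)$ and $\eps>0$. For each possible value $\kappa\in\calh^+_{e_0}$ of $K^{K_0,c}(t)$, select an $\eps$-optimal control for the problem started at $\kappa$, and concatenate: follow $c$ on $[0,t]$ and the selected continuation (driven by $\widetilde W$) on $[t,\infty)$. The glued control $\hat c$ is admissible because the constraint $\langle K,e_0\rangle\ge 0$ holds on $[0,t]$ (as $c$ is admissible) and on $[t,\infty)$ (the continuation starts from $K^{K_0,c}(t)\in\calh^+_{e_0}$ and keeps the state in the closed set $\calh^+_{e_0}$). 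By the same time-shift identity,
\[
V_{S,e_0}(K_0)\ge J_S(\hat c)\ge \E\left[\int_0^t e^{-\rho s}\mathcal{U}(c(s))\,ds+e^{-\rho t}\bigl(V_{S,e_0}(K^{K_0,c}(t))-\eps\bigr)\right];
\]
letting $\eps\downarrow 0$ and taking the supremum over $c$ gives $V_{S,e_0}(K_0)\ge R(K_0)$.

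I expect the main obstacle to be the rigorous execution of this continuation step. One must produce a jointly measurable $\eps$-optimal selection $\kappa\mapsto\bar c^{\kappa}$ and verify that the concatenated control is progressively measurable with respect to the augmented Brownian filtration and lies in $\mathcal{A}_{S,e_0}(K_0)$. In this infinite-dimensional, state-constrained setting this measurable-selection and concatenation argument is the delicate point, and it is exactly what the cited general theorem packages once its hypotheses are checked, namely the autonomy of \eqref{eq:K_abstractstoch}, the measurability and regularity of the data $\mathcal{U}$, $N$, and $B$ guaranteed by Assumptions \ref{ass:state}--\ref{ass:cost}, and the closedness of the constraint set $\calh^+_{e_0}$. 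Hence my recommended route is to verify these hypotheses rather than to redo the selection argument by hand.
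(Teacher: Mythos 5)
Your overall strategy coincides with the paper's: the authors do not carry out the two-inequality/measurable-selection argument themselves either, but invoke Theorem 2.31 (and its improved version, Theorem 3.70) of \cite{fabbri2017stochastic} and explain how to adapt it. The one place where your plan is slightly off target is the claim that it suffices to \emph{verify} the hypotheses of that theorem: two of them fail here, and the paper's short proof consists precisely of explaining how to get around them. First, the cited theorem is stated on a finite horizon; the paper disposes of this by a standard time-shift argument. Second --- and this is the substantive item your checklist omits --- the cited theorem assumes a running cost that is state-dependent and \emph{uniformly bounded in the control}, which $\mathcal{U}(c)=\int_0^{2\pi}\frac{c^{1-\sigma}}{1-\sigma}f\,dx$ certainly is not; that boundedness is used in the cited proof to pass to the limit by dominated convergence, and the paper replaces it by monotone convergence, which is available because $\mathcal{U}(c)$ has a fixed sign (nonnegative for $\sigma\in(0,1)$, nonpositive for $\sigma\in(1,\infty)$). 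Your two-inequality sketch is a correct outline of what the underlying proof does, and your identification of the jointly measurable $\eps$-optimal selection and the admissibility of the concatenated control as the delicate step is accurate; but as written, the plan of ``checking the hypotheses'' would stall at the boundedness assumption, and the actual content of the paper's proof is exactly the observation that sign-definiteness of the utility lets one substitute monotone for dominated convergence there.
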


\begin{proof}
The proof of this result is an adaptation of the proof of Theorem 3.70 in \cite{fabbri2017stochastic}. The only differences are that in the aforementioned theorem, 
\begin{itemize}
\item the horizon is finite;
\item the current cost is state-dependent and uniformly bounded in the control.
\end{itemize}
The first difference can be overcome by standard shift arguments as is done, e.g., in section 2.4 of \cite{fabbri2017stochastic}. With regard to the second difference, in the proof of Theorem 3.70 in
\cite{fabbri2017stochastic}, such boundedness is used to apply dominated convergence inside the integral. In our case, due to the specific form of the functional, we can apply monotone convergence.
\end{proof}

In this case, we need to change strategy to prove the verification theorem. For such values of $\sigma$, the value function $V$ is negative, and the same holds for the solution $w$ of the HJB equation found in Theorem \ref{th:HJB_particular solution}. Moreover, $V$ may be equal to $-\infty$.
This fact does not allow us to repeat the proof of Lemma \ref{lem:limit_equality} in this case. In particular, since $\sigma \in (1,\infty)$, the first inequality in \eqref{eq:limit_0} does not hold.
Our approach here is related to that  of \cite[section 4.6]{BGP}.
First we  prove a lower bound for $V$, i.e., that $V\ge w$, which, in particular, says that $V$ is never equal to $-\infty$ on $\calh^{++}_{e_0}$.
Then, using homogeneity and Proposition \ref{prop:DPP}, we prove that equality indeed holds.

%\red{QUI OCCORRE CONSIDERARE ANCHE IL CASO $<k_0,E_0>=0$ E RICORDARE  CHE LI' SOLO IL CONTROLLO NULLO È AMMISSIBILE COME NEL CASO PRECEDENTE}

%POI NEL RESTO DELLA SEZIONE VA AGGIUSTATA LA NOTAZIONE E I RICHIAMI ALLE IPOTESI.

\begin{lemma}\label{lem:V_not_infinite}
Let Assumptions {\rm \ref{ass:state}--\ref{ass:cost}} hold. If $\sigma \in(1,\infty)$, then for each $K_0\in \mathcal{H}^{++}_{e_0}$ there exists a couple $(c^*_{K_0},K^{K_0,c^*_{K_0}})$, given by an admissible control $c^*_{K_0}$, and by state variable $K^{K_0,c^*_{K_0}}$, such that
\[c^*_{K_0}(t)=\left(\frac{N\nabla w(K^{K_0,c^*_{K_0}}(t))}{f}\right)^{-\frac{1}{\sigma}}\]
and such that the functional $J_S(c^*_{K_0})$ is finite, namely
\[0\geq V_{S,e_0}(K_0)\geq J_S(c^*)>-\infty,\]
and in particular $J_S(c^*_{K_0})=w(K_0)$, where $w$ is defined as in \eqref{eq:HJB_particular solution}.
\end{lemma}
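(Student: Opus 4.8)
The plan is to prove this lemma constructively: I exhibit the candidate feedback control, verify it is admissible on $\calh^{++}_{e_0}$, and then compute the associated payoff \emph{in closed form}, exploiting the fact that the projection of the optimal state onto $e_0$ is a geometric Brownian motion whose negative moments are explicit. This simultaneously shows finiteness and identifies the value with $w(K_0)$, which then yields the asserted chain of inequalities.

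First I would define $c^*_{K_0}$ through the feedback $c^*_{K_0}(t)=\big(N\nabla w(K(t))/f\big)^{-1/\sigma}$. Using $\nabla w(K)=\gamma\langle K,e_0\rangle^{-\sigma}e_0$ from \eqref{eq:DV}, this equals $\langle K(t),e_0\rangle\,(\gamma Ne_0/f)^{-1/\sigma}$, so the closed-loop equation is exactly \eqref{eq:K_optimal_abstract}. As in the case $\sigma\in(0,1)$, Assumption \ref{ass:cost}(ii) gives $(\gamma Ne_0/f)^{-1/\sigma}N\in\calh$; the drift is then affine in $K$ and the noise satisfies Assumption \ref{ass:state}, so Theorem \ref{teo:existence} applies and produces the unique mild solution $K^{K_0,*}$ of \eqref{eq:optimal_pde}. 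Projecting onto $e_0$ as in \eqref{eq:dynamic_optimalpath_e0}, the scalar $X_0(t):=\langle K^{K_0,*}(t),e_0\rangle$ solves a linear SDE and is the geometric Brownian motion \eqref{eq:dynamic_optimalpath_e0_exp}; since $K_0\in\calh^{++}_{e_0}$ forces $X_0(0)>0$, we get $X_0(t)>0$ a.s. for all $t$, so $K^{K_0,*}(t)\in\calh^{++}_{e_0}$, the control is well defined, nonnegative, and has finite moments, whence $c^*_{K_0}\in\mathcal{A}_{S,e_0}(K_0)$.

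The core step is the explicit evaluation of $J_S(c^*_{K_0})$. Inserting $c^*_{K_0}(t,x)=X_0(t)\,(\gamma N(x)e_0(x)/f(x))^{-1/\sigma}$ into $\mathcal{U}$ factorizes space and time: $\mathcal{U}(c^*_{K_0}(t))=\frac{X_0(t)^{1-\sigma}}{1-\sigma}\,I$, where $I:=\gamma^{-\frac{1-\sigma}{\sigma}}\int_0^{2\pi}(Ne_0)^{-\frac{1-\sigma}{\sigma}}f^{1/\sigma}dx$ is finite by Assumption \ref{ass:cost}(ii). By the log-normal moment formula applied to \eqref{eq:dynamic_optimalpath_e0_exp}, $\E[X_0(t)^{1-\sigma}]=X_0(0)^{1-\sigma}e^{mt}$ with $m=(1-\sigma)(\tilde g-\tfrac12\alpha_0^2)+\tfrac12(1-\sigma)^2\alpha_0^2$, and Tonelli (the integrand has constant sign because $\sigma>1$) gives $J_S(c^*_{K_0})=\frac{I}{1-\sigma}X_0(0)^{1-\sigma}\int_0^\infty e^{(m-\rho)t}dt$.

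I expect the main obstacle to be exactly the convergence of this time integral and the identification of its sum. The delicate point, specific to $\sigma>1$, is that $X_0^{1-\sigma}$ is singular at $0$ and unbounded along trajectories; finiteness is rescued because at each fixed $t$ the GBM has all negative moments finite, and Assumption \ref{ass:cost}(i) must dominate their exponential-in-time growth $e^{mt}$ against $e^{-\rho t}$. Concretely, using $\tilde g=\lambda_0-\langle\gamma^{-1/\sigma}f^{1/\sigma},(Ne_0)^{-\frac{1-\sigma}{\sigma}}\rangle$ together with \eqref{eq:control_optimal_term}, the quadratic $\alpha_0^2$ terms collapse and one finds $\rho-m=\frac{1}{\sigma}\big(\rho-\lambda_0(1-\sigma)+\tfrac12\alpha_0^2\sigma(1-\sigma)\big)$, which is strictly positive precisely by Assumption \ref{ass:cost}(i). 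The integral then equals $\sigma/\big(\rho-\lambda_0(1-\sigma)+\tfrac12\alpha_0^2\sigma(1-\sigma)\big)$, and substituting the value of $\gamma$ from \eqref{eq:gamma} reduces $J_S(c^*_{K_0})$ to $\gamma\frac{X_0(0)^{1-\sigma}}{1-\sigma}=w(K_0)$, which is finite. Finally I would conclude: since $c^*_{K_0}$ is admissible, $V_{S,e_0}(K_0)\ge J_S(c^*_{K_0})=w(K_0)>-\infty$; and since $\sigma>1$ makes $\mathcal{U}\le 0$, hence $J_S\le 0$ for every admissible control, also $V_{S,e_0}(K_0)\le 0$, giving $0\ge V_{S,e_0}(K_0)\ge J_S(c^*_{K_0})=w(K_0)>-\infty$.
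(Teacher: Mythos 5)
Your proposal is correct and follows essentially the same route as the paper: define the feedback control via $\nabla w$, obtain the closed-loop equation and its mild solution from Theorem \ref{teo:existence}, check admissibility through the geometric Brownian motion $\langle K^{K_0,*}(t),e_0\rangle$, and evaluate $J_S(c^*_{K_0})$ in closed form using the log-normal moments and Assumption \ref{ass:cost}(i) to get $w(K_0)$. Your version actually spells out the exponent bookkeeping ($\rho-m=\frac{1}{\sigma}(\rho-\lambda_0(1-\sigma)+\frac12\alpha_0^2\sigma(1-\sigma))>0$) more carefully than the paper's displayed computation, but the argument is the same.
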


\begin{proof}
We consider the control, expressed in a feedback form,
\[c^*_{K_0}(t)=\left(\frac{N\nabla w(K^{K_0,c^*_{K_0}}(t))}{f}\right)^{-\frac{1}{\sigma}}=\langle K^{K_0,c^*_{K_0}}(t),e_0\rangle\left(\gamma \frac{N e_0}{f}\right)^{-1/\sigma}.\]
By plugging this control into the state equation \eqref{eq:K_abstractstoch}, we obtain the closed-loop equation \eqref{eq:K_optimal_abstract}. From Theorem \ref{teo:existence} there exists a unique mild solution for the closed-loop equation $K^{K_0,c^*_{K_0}}(t)$.
In order to see that this control is admissible, we just need to repeat the computation proposed in \eqref{eq:dynamic_optimalpath_e0_exp}. The control $c^*_{K_0}$ can be rewritten in an open-loop form; see \eqref{eq:optimal_control}. Then by plugging \eqref{eq:optimal_control_feedback} into the functional $J_S$, from \eqref{eq:optimal_control} we get
\begin{align}\label{eq:J_c_optimal}
J_S(c^*_{K_0})&=\langle \gamma^{-\frac{1}{\sigma}}f^{\frac{1}{\sigma}},(Ne_0)^{-\frac{1-\sigma}{\sigma}}\rangle \gamma\frac{\langle K^{K_0,*}(0),e_0\rangle^{1-\sigma}}{1-\sigma}\int_0^\infty e^{-\rho t}\E\left[e^{gt+\alpha_0\beta_0(t)}\right] dt\nonumber\\
&=\langle \gamma^{-\frac{1}{\sigma}}f^{\frac{1}{\sigma}},(Ne_0)^{-\frac{1-\sigma}{\sigma}}\rangle  \gamma\frac{\langle K^{K_0,*}(0),e_0\rangle^{1-\sigma}}{1-\sigma}\int_0^\infty e^{-\frac{\left(\rho-\lambda_0(1-\sigma)+\frac12\alpha_0^2\sigma(1-\sigma)\right)}{\sigma} t}dt\nonumber\\
&= \gamma\frac{\langle K^{K_0,*}(0),e_0\rangle^{1-\sigma}}{1-\sigma}.\nonumber
\end{align}
 \end{proof}

\begin{proposition}\label{prop:explicit_value function}
Let Assumptions {\rm \ref{ass:state}--\ref{ass:cost}} hold. If $\sigma \in(1,\infty)$, then the value function $V$ satisfies the following identity for $K_0\in \mathcal{H}^{++}_{e_0}$\textup{:}
\[V_{S,e_0}(K_0)=\eta\frac{\langle K_0,e_0\rangle^{1-\sigma}}{1-\sigma},\]
where $\eta\geq 0$.
\end{proposition}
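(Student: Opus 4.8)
The plan is to exploit two structural features of the problem for $\sigma\in(1,\infty)$: first, that $V_{S,e_0}$ depends on $K_0$ only through the scalar $\langle K_0,e_0\rangle$, and second, that it is positively homogeneous of degree $1-\sigma$. Together these force the power form, and the sign of $\eta$ will follow from the sign of the utility combined with the lower bound of Lemma \ref{lem:V_not_infinite}.

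For the first feature, I would observe that the $e_0$-component $X_0(t):=\langle K^{K_0,c}(t),e_0\rangle$ solves the one-dimensional SDE \eqref{eq:limit_equality}, namely $dX_0=\lambda_0 X_0\,dt-\langle c(t)N,e_0\rangle\,dt+\alpha_0 X_0\,d\beta_0$. The crucial point is that this equation is \emph{autonomous} in $X_0$: the drift sees the control only through the scalar functional $\langle c(t)N,e_0\rangle$ and picks up $\lambda_0$ because $\mathcal{L}e_0=\lambda_0 e_0$, while the diffusion coefficient is $\alpha_0 X_0$ with $\alpha_0$ constant by Assumption \ref{ass:state}(iii). Hence the constraint $X_0(t)\geq 0$ defining $\mathcal{A}_{S,e_0}(K_0)$ depends on $K_0$ only through $X_0(0)=\langle K_0,e_0\rangle$, and the reward $J_S(c)$ does not depend on $K_0$ at all. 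Consequently, if $\langle K_0,e_0\rangle=\langle K_0',e_0\rangle$ then $\mathcal{A}_{S,e_0}(K_0)=\mathcal{A}_{S,e_0}(K_0')$ and $V_{S,e_0}(K_0)=V_{S,e_0}(K_0')$; that is, $V_{S,e_0}(K_0)=\phi(\langle K_0,e_0\rangle)$ for some function $\phi$.

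For homogeneity, I would use linearity of the dynamics: since $\mathcal{L}$ is linear, the control enters linearly, and $B(\lambda K)=\lambda B(K)$, uniqueness of mild solutions gives $K^{\lambda K_0,\lambda c}=\lambda K^{K_0,c}$ for $\lambda>0$. The map $c\mapsto\lambda c$ is then a bijection of $\mathcal{A}_{S,e_0}(K_0)$ onto $\mathcal{A}_{S,e_0}(\lambda K_0)$ (both $c\geq 0$ and $X_0\geq 0$ are preserved), and $\mathcal{U}(\lambda c)=\lambda^{1-\sigma}\mathcal{U}(c)$ yields $J_S(\lambda c)=\lambda^{1-\sigma}J_S(c)$, whence $V_{S,e_0}(\lambda K_0)=\lambda^{1-\sigma}V_{S,e_0}(K_0)$. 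Combining the two features, for any $K_0\in\mathcal{H}^{++}_{e_0}$ with $a:=\langle K_0,e_0\rangle>0$ I would write $V_{S,e_0}(K_0)=\phi(a)=V_{S,e_0}(a e_0)=a^{1-\sigma}V_{S,e_0}(e_0)$, so that $V_{S,e_0}(K_0)=V_{S,e_0}(e_0)\,\langle K_0,e_0\rangle^{1-\sigma}=\eta\frac{\langle K_0,e_0\rangle^{1-\sigma}}{1-\sigma}$ with $\eta:=(1-\sigma)V_{S,e_0}(e_0)$.

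It then remains to check $\eta\geq 0$. Since $\sigma>1$, one has $\mathcal{U}(c)\leq 0$ for every admissible $c$, hence $J_S\leq 0$ and $V_{S,e_0}(e_0)\leq 0$; as $1-\sigma<0$ this gives $\eta\geq 0$, and finiteness of $\eta$ (i.e.\ $V_{S,e_0}(e_0)>-\infty$) is guaranteed by the lower bound $V_{S,e_0}\geq w$ of Lemma \ref{lem:V_not_infinite}. I expect the main work to be the decoupling step: verifying rigorously that the admissible set depends on $K_0$ only through $\langle K_0,e_0\rangle$, which hinges precisely on the state-independence of $\alpha_0$ in Assumption \ref{ass:state}(iii); the scaling and sign arguments are then routine. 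Note that Proposition \ref{prop:DPP} is not needed to obtain this functional form — it is reserved for the subsequent step identifying $\eta=\gamma$.
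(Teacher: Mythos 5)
Your proposal is correct and follows essentially the same route as the paper: Step 1 reduces $V_{S,e_0}$ to a function of $\langle K_0,e_0\rangle$ via the autonomous scalar SDE for $X_0(t)=\langle K^{K_0,c}(t),e_0\rangle$ (using the constancy of $\alpha_0$ from Assumption \ref{ass:state}(iii) and finiteness from Lemma \ref{lem:V_not_infinite}), and Step 2 establishes $(1-\sigma)$-homogeneity via the scaling bijection $\mathcal{A}_{S,e_0}(aK_0)=a\,\mathcal{A}_{S,e_0}(K_0)$ and $J_S(ac)=a^{1-\sigma}J_S(c)$, after which the sign of $\eta$ follows from $\mathcal{U}\le 0$. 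Your identification of Assumption \ref{ass:state}(iii) as the decoupling hinge, and your remark that Proposition \ref{prop:DPP} is not needed at this stage, both match the paper's structure.
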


%%%\begin{proof}
{\em Proof.} {\it Step} 1.\ First, we prove that the value function $V_{S,e_0}(K)$ is a function of $\langle K,e_0\rangle$. If we pick two elements in $\mathcal{H}$, $K_1,K_2\in \mathcal{H}$ such that
\begin{equation}\label{eq:Ke0_equality}
\langle K_1,e_0\rangle=\langle K_2,e_0\rangle,
\end{equation}
 then
\begin{equation}\label{eq:value_function_equality}
V_{S,e_0}(K_1)=V_{S,e_0}(K_2).
\end{equation}
If we prove that $\mathcal{A}_{S,e_0}(K_1)=\mathcal{A}_{S,e_0}(K_2)$, then \eqref{eq:value_function_equality} is proved. By writing the weak formulation for $K(t)$, with test function $e_0$, we have that $X^{K_0,c}(t)=\langle K^{K_0,c}(t),e_0\rangle$ is the unique solution of the following equation:
%\begin{equation}
%\begin{cases}
%d\langle K(t),e_0\rangle=\lambda_0\langle K(t,x),e_0\rangle dt-\langle c(t)N,\phi\rangle dt+\alpha_0\langle K(t),e_0\rangle d\beta_0(t),\\
%\langle K(0),e_0\rangle=\langle K_0,e_0\rangle
%\end{cases}
%\end{equation}
\begin{equation}\label{eq:X}
\begin{cases}
dX(t)=\lambda_0X(t) dt-\langle c(t)N,\phi\rangle dt+\alpha_0X(t)d\beta_0(t),\\
X(0)=\langle K_0,e_0\rangle.
\end{cases}
\end{equation}
 Assumption \eqref{eq:Ke0_equality} means that the processes $X^{K_1,c}(t)$, $X^{K_2,c}(t)$ satisfy the same equation and the same initial condition. Then by uniqueness of solution of \eqref{eq:X}, we have that $X^{K_1,c}(t)=X^{K_2,c}(t)$ for each $t\in[0,T]$, a.s., and we can conclude that the two sets  $\mathcal{A}_{S,e_0}(K_1), \mathcal{A}_{S,e_0}(K_2)$ coincide. Since from Lemma \ref{lem:V_not_infinite}, $0\geq V>-\infty$, there exists a function $F:\R\to \R$ such that
\[V_{S,e_0}(K)=F(\langle K,e_0\rangle).\]

{\it Step}~2.
The next lines are devoted to the proof of the $(1-\sigma)$-homogeneity of the function $F$, i.e., given $a>0$,
\[V_{S,e_0}(aK)=a^{1-\sigma} V_{S,e_0}(K).\]
Let us assume that $\mathcal{A}_{S,e_0}(aK)=a\mathcal{A}_{S,e_0}(K)$ holds; then
\begin{multline*}
V_{S,e_0}(aK)=\sup_{c\in \mathcal{A}_{S,e_0}(aK)} J_S(c)=\sup_{c\in a\mathcal{A}_{S,e_0}(K)} J_S(c)=\sup_{a^{-1}c\in \mathcal{A}_{S,e_0}(K)} J_S(c)\\=\sup_{\tilde{c}\in \mathcal{A}_{S,e_0}(K)} J_S(a\tilde{c})
=a^{1-\sigma}\sup_{\tilde{c}\in \mathcal{A}_{S,e_0}(K)} J_S(\tilde{c})
=a^{1-\sigma}V_{S,e_0}(K).
\end{multline*}
In order to conclude, we need to prove the equality between the sets of controls $a\mathcal{A}_{S,e_0}(K_0)$, $\mathcal{A}_{S,e_0}(aK_0)$. First, we prove that if $c\in \mathcal{A}_{S,e_0}(aK_0)$, then $c=a\cdot \tilde{c}$ with $\tilde{c}\in \mathcal{A}_{S,e_0}(K_0)$. By definition, $c\in \mathcal{A}_{S,e_0}(aK_0)$ means that $\langle K^{aK_0,c}(t), e_0\rangle>0$. By linearity of \eqref{eq:K_abstract}, we get that $a^{-1}K^{aK_0,c}(t)=K^{K_0,a^{-1}c}(t)$. In this way we have proved that $a^{-1}c=\tilde{c}$ with $\tilde{c}\in\mathcal{A}_{S,e_0}(K_0)$. To prove the reverse inclusion $a\mathcal{A}_{S,e_0}(K_0)\subseteq\mathcal{A}_{S,e_0}(aK_0)$, a similar argument is used. We want to prove that given  $\tilde{c}\in\mathcal{A}_{S,e_0}(K_0)$, then $a\cdot \tilde{c}\in \mathcal{A}_{S,e_0}(aK_0)$. By definition, if $\tilde{c}\in\mathcal{A}_{S,e_0}(K_0)$, then $\langle K^{K_0,\tilde{c}}(t), e_0\rangle>0$ by linearity $a\cdot K^{K_0,\tilde{c}}(t)=K^{aK_0,a\tilde{c}}(t)$, which means that $a\cdot \tilde{c}\in \mathcal{A}_{S,e_0}(aK_0)$.

Since the function $F$ is negative and $(1-\sigma)$-homogeneous, we can write the value function as
\begin{align*}
V_{S,e_0}(K)=\eta\frac{\langle K,e_0\rangle^{1-\sigma}}{1-\sigma},
\end{align*}
with $\eta\geq 0$.

\begin{lemma}\label{lem:value function_no0}
Let Assumptions {\rm \ref{ass:state}--\ref{ass:cost}} hold. If $\sigma \in(1,\infty)$, the value function on $\mathcal{H}^{++}_{e_0}$ is not always $0$, i.e., $V_{S,e_0}\neq 0$.
\end{lemma}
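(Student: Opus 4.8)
The goal is to show that for $\sigma \in (1,\infty)$ the value function $V_{S,e_0}$ is not identically zero on $\calh^{++}_{e_0}$, which amounts to showing that the constant $\eta$ from Proposition \ref{prop:explicit_value function} is strictly positive.

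My plan is to exhibit a single admissible control at some point $K_0 \in \calh^{++}_{e_0}$ whose reward $J_S$ is strictly negative but finite, which (since $V_{S,e_0} \le 0$ always in this regime) forces $V_{S,e_0}(K_0) < 0$ and hence $\eta > 0$. The natural candidate is precisely the feedback control already constructed in Lemma \ref{lem:V_not_infinite}, namely $c^*_{K_0}(t) = \langle K^{K_0,c^*_{K_0}}(t), e_0 \rangle (\gamma N e_0 / f)^{-1/\sigma}$. Indeed, Lemma \ref{lem:V_not_infinite} already establishes that this control is admissible and that $J_S(c^*_{K_0}) = w(K_0) = \gamma \frac{\langle K_0, e_0\rangle^{1-\sigma}}{1-\sigma}$, and that this quantity is finite.

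The only thing that remains is to verify that $w(K_0) \neq 0$. For $\sigma \in (1,\infty)$ and $K_0 \in \calh^{++}_{e_0}$ we have $\langle K_0, e_0 \rangle > 0$, so $\langle K_0, e_0 \rangle^{1-\sigma}$ is a strictly positive finite number, and $\frac{1}{1-\sigma} < 0$. Thus $w(K_0) = \gamma \frac{\langle K_0, e_0\rangle^{1-\sigma}}{1-\sigma}$ is strictly negative provided $\gamma > 0$. That $\gamma > 0$ follows directly from formula \eqref{eq:gamma} together with Assumption \ref{ass:cost}: the bracket $\rho - \lambda_0(1-\sigma) + \frac12 \alpha_0^2 \sigma(1-\sigma)$ is positive by Assumption \ref{ass:cost}(i), and the spatial integral is strictly positive since $N$, $e_0$, $f$ are all strictly positive (Assumption \ref{ass:cost}(ii) and the positivity of $e_0$); raising positive quantities to the power $\sigma$ keeps them positive. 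Hence $w(K_0) < 0$, so $V_{S,e_0}(K_0) \le J_S(c^*_{K_0}) = w(K_0) < 0$, giving $\eta > 0$ and in particular $V_{S,e_0} \not\equiv 0$.

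This argument is essentially a corollary of Lemma \ref{lem:V_not_infinite}, so I do not anticipate a genuine obstacle; the content is simply assembling the sign information. The one point requiring a little care is the direction of the inequality: since $J_S(c^*_{K_0})$ is a lower bound for the supremum $V_{S,e_0}(K_0)$, and both $J_S(c^*_{K_0})$ and $V_{S,e_0}(K_0)$ are negative, we conclude $V_{S,e_0}(K_0) \in [w(K_0), 0)$, which is enough to guarantee nonvanishing. No separate convergence or homogeneity argument is needed here beyond what Lemma \ref{lem:V_not_infinite} and Proposition \ref{prop:explicit_value function} already provide.
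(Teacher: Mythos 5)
There is a genuine gap, and it is the inequality direction that you yourself flag as ``the one point requiring a little care.'' The value function is a \emph{supremum}, so exhibiting the single admissible control $c^*_{K_0}$ with $J_S(c^*_{K_0}) = w(K_0) < 0$ gives only the \emph{lower} bound $V_{S,e_0}(K_0) \ge w(K_0)$; it does not give $V_{S,e_0}(K_0) \le J_S(c^*_{K_0})$, and it does not exclude $V_{S,e_0}(K_0) = 0$. A supremum of strictly negative numbers can perfectly well equal $0$ (consider $\{-1/n\}_{n\in\N}$), so your conclusion ``$V_{S,e_0}(K_0) \in [w(K_0),0)$'' is unjustified at the strict inequality: what you have actually established is $V_{S,e_0}(K_0) \in [w(K_0),0]$, which is exactly Lemma \ref{lem:V_not_infinite} and does not rule out $\eta = 0$. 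The inequality $V_{S,e_0}(K_0) \le J_S(c^*_{K_0})$ would follow only from knowing that $c^*_{K_0}$ is optimal --- but that is the conclusion of the verification theorem, which in turn relies on the present lemma; so the argument is circular.

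The paper's proof is necessarily more involved precisely for this reason. It argues by contradiction: if $V_{S,e_0} \equiv 0$, the dynamic programming principle forces the existence of controls $c_n$ whose truncated rewards $\E[\int_0^T e^{-\rho t}\,\mathcal{U}(c_n)\,dt]$ approach $0$, even when started from initial data with $\langle K_n(0),e_0\rangle \le 1/n$. The admissibility (state) constraint then forces $\int_0^t e^{\lambda_0 s}\,\E[\langle c_n(s)N,e_0\rangle]\,ds \le 1/n$, hence $c_n \to 0$ a.e.\ along a subsequence; since $\sigma>1$ makes $c^{1-\sigma}/(1-\sigma) \to -\infty$ as $c\to 0$, the rewards blow down to $-\infty$, contradicting their convergence to $0$. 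In other words, the substantive content of the lemma is that \emph{no} sequence of admissible controls can push the reward up to $0$, and that is what your proposal does not address. The sign computations you do carry out ($\gamma>0$, hence $w(K_0)<0$) are correct but only reprove the lower bound already contained in Lemma \ref{lem:V_not_infinite}.
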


\begin{proof}
We assume by contradiction that $V_{S,e_0}\equiv 0$. By the dynamic programming principle, in Proposition \ref{prop:DPP}, we have that for each $T>0$
\begin{align*}
V_{S,e_0}(K_0)=\sup_{c\in \mathcal{A}_{S,e_0}(K_0)}\E\left[ \int_0^Te^{-\rho t}\mathcal{U}(c(s))ds\right]+V_{S,e_0}(K^{K_0,c}(T))\\
\Rightarrow \sup_{c\in \mathcal{A}_{S,e_0}(K_0)}\E\left[ \int_0^Te^{-\rho t}\mathcal{U}(c(s))ds\right]=0.
\end{align*}
Given $n\in\N$, we choose a particular sequence of control $c_n$ such that
\begin{equation}\label{eq:assumption_cn}
-\frac{1}{n}\leq\E\left[ \int_0^Te^{-\rho t}\mathcal{U}(c_n(s))ds\right]\leq 0,\\ \end{equation}
and a particular sequence of initial condition for the state variable $K_n(0)\in L^2(S^2)$ such that
$\langle K_n(0),e_0\rangle\leq \frac{1}{n}.$
Now we write an equation satisfied by the capital variable with initial conditions $K_n(0)$ and control $c_n$, in its weak formulation with test function $e_0$:
\begin{multline*}
\langle K(t),e_0\rangle=\langle K_n(0),e_0\rangle+\lambda_0\int_0^{t}\langle K(s),e_0\rangle ds-\int_0^{t}\langle c_n(s)N,e_0\rangle ds\\
+\int_0^{t}\alpha_0\langle K(s),e_0\rangle d\beta_0(s)\quad \forall t\in [0,T].
\end{multline*}
We pass to the average,
\[
\E\left[\langle K(t),e_0\rangle\right]=\langle K_n(0),e_0\rangle+\lambda_0\int_0^{t}\E\left[\langle K(s),e_0\rangle \right] ds-\int_0^{t}\E\left[\langle c_n(s)N,e_0\rangle \right]ds.
\]
In terms of the derivative,
\[\frac{d}{dt}\E\left[\langle K(t),e_0\rangle\right]=\lambda_0\E\left[\langle K(t),e_0\rangle\right]-\E\left[\langle c_n(t)N,e_0\rangle \right];\]
then
\[0\leq\E\left[\langle K(t),e_0\rangle\right]=e^{\lambda_0 t}\langle K_n(0),e_0\rangle-\int_0^{t}e^{\lambda_0(t-s)}\E\left[\langle c_n(s)N,e_0\rangle \right]ds.\]
The previous inequality implies
\[\int_0^{t}e^{\lambda_0(t-s)}\E\left[\langle c_n(s)N,e_0\rangle \right]ds\leq e^{\lambda_0 t}\langle K_n(0),e_0\rangle.\]
After some manipulations on the exponential function, we can rewrite the previous inequality as
\begin{equation}\label{eq:inequality_1}
0\leq\int_0^{t}e^{\lambda_0s}\E\left[\langle c_n(s)N,e_0\rangle \right]ds\leq \langle K_n(0),e_0\rangle\leq \frac{1}{n}\quad \forall t\in [0,T].
\end{equation}
Then, we define $\mathcal{C}_n(s)=\langle c_n(t)N,e_0\rangle$, and as a straightforward consequence of \eqref{eq:inequality_1}, we get that this quantity converges to zero, i.e.,
\[\mathcal{C}_n(s)\to 0\,\,\, \text{in}\,\,L^1(\Omega\times [0,T]).\]
Then there exists a subsequence $n_k$ such that
\[\mathcal{C}_{n_k}(s)\to 0\,\,\, \text{a.e. in}\,\,\Omega\times[0,T].\]
Since $N,e_0>0$,  we can extract a subsequence of $n_k$, $n_{k_h}$ such that
\[c_{n_{k_h}}(t,x,\omega)\to0\,\,\, \text{a.e. in}\,\,\Omega\times[0,T]\times S^1.\]
Since $\sigma >1$, $\frac{c_{n_{k_h}}^{1-\sigma}}{1-\sigma}\to -\infty$ for $h\to \infty$, which contradicts \eqref{eq:assumption_cn}.
\end{proof}

\begin{theorem}
Let Assumptions {\rm \ref{ass:state}--\ref{ass:cost}} hold. If $\sigma \in(1,\infty)$, then the value function $V$ is a classical solution of the HJB equation \eqref{eq:HJB}, and it satisfies identity
\[V_{S,e_0}(K_0)=\gamma\frac{\langle K_0,e_0\rangle^{1-\sigma}}{1-\sigma}\]
for each $K_0\in \mathcal{H}^{++}_{e_0}$ where $\gamma$ satisfies \eqref{eq:gamma}.
\end{theorem}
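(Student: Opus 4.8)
The plan is to leverage the three preparatory results of this subsection. By Proposition \ref{prop:explicit_value function} we already know that $V_{S,e_0}(K)=\eta\frac{\langle K,e_0\rangle^{1-\sigma}}{1-\sigma}$ for some $\eta\ge 0$, and by Lemma \ref{lem:value function_no0} we have $\eta\neq 0$, hence $\eta>0$. Thus $V_{S,e_0}$ has exactly the functional form of the candidate $w$ of \eqref{eq:HJB_particular solution}, and being a positive scalar multiple of it, it inherits its regularity: it is $C^2$ on $\calh^{++}_{e_0}$ with $\call DV_{S,e_0}$ and $\mathrm{Tr}[B(\cdot)B(\cdot)^*D^2V_{S,e_0}]$ continuous, exactly as computed in \eqref{eq:DV}--\eqref{eq:trace}. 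The whole problem is therefore reduced to identifying the scalar $\eta$, and I claim $\eta=\gamma$. A crucial simplification is that $V_{S,e_0}(K)$ depends on $K$ only through $X(t):=\langle K(t),e_0\rangle$, which solves the scalar geometric-type SDE \eqref{eq:X}; hence every Itô computation below is one-dimensional and the delicate infinite-dimensional stochastic calculus is avoided.

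First I would record the easy inequality $\eta\le\gamma$. Indeed, Lemma \ref{lem:V_not_infinite} exhibits an admissible control attaining $w(K_0)$, so $V_{S,e_0}(K_0)\ge w(K_0)$ for every $K_0\in\calh^{++}_{e_0}$; since $\sigma>1$ makes the common factor $\frac{\langle K_0,e_0\rangle^{1-\sigma}}{1-\sigma}$ strictly negative, the inequality $\eta\frac{(\cdot)}{1-\sigma}\ge\gamma\frac{(\cdot)}{1-\sigma}$ reverses to $\eta\le\gamma$.

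For the reverse inequality I would prove that $V_{S,e_0}$ is a classical solution of \eqref{eq:HJB} and then invoke the algebra of the proof of Theorem \ref{th:HJB_particular solution}, which shows that an ansatz of the form $\eta\frac{\langle K,e_0\rangle^{1-\sigma}}{1-\sigma}$ with $\eta>0$ solves \eqref{eq:HJB} \emph{iff} $\eta^{-1/\sigma}$ equals the expression fixing $\gamma$ in \eqref{eq:gamma}; uniqueness then forces $\eta=\gamma$. To show that $V_{S,e_0}$ solves \eqref{eq:HJB}, I would run the dynamic-programming argument of Proposition \ref{prop:DPP} in both directions. For the supersolution inequality, fix a simple control, apply the one-dimensional Itô formula to $e^{-\rho s}V_{S,e_0}(K(s))=e^{-\rho s}\eta\frac{X(s)^{1-\sigma}}{1-\sigma}$ up to the stopping times $\tau_n$ of \eqref{eq:tau_n}, insert the resulting identity into \eqref{eq:DPP}, divide by $t$, and let $t\downarrow 0$; optimizing the emerging instantaneous term over the control (using \eqref{eq:sup}) yields, at $K_0$, the inequality $\rho V_{S,e_0}\ge \langle K_0,\call DV_{S,e_0}\rangle+H_{MAX}(DV_{S,e_0})+\frac12\mathrm{Tr}[B B^*D^2V_{S,e_0}]$. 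For the subsolution inequality I would repeat the computation along a sequence of $\varepsilon$-optimal controls supplied by the supremum in \eqref{eq:DPP}, obtaining the opposite inequality up to an additive $\varepsilon$ that is then sent to $0$.

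The main obstacle is this last, subsolution, direction. Unlike the supersolution step, where a single fixed control suffices, here the $\varepsilon$-optimal controls depend on both the horizon $t$ and on $\varepsilon$, so interchanging the supremum with the limit $t\downarrow0$ and controlling $\E[X(s)^{1-\sigma}]$ near $s=0$ requires care; moreover the singularity of $V_{S,e_0}$ on $\partial\calh^{++}_{e_0}$ (where $V_{S,e_0}\to-\infty$, by Remark \ref{rm:boundarycond}) forces one to work up to $\tau_n$ and to pass to the limit $n\to\infty$ using the fixed sign of the utility $\mathcal{U}\le 0$ (monotone convergence), exactly the device already used in Proposition \ref{prop:verification_sigma_small} and Lemma \ref{lem:V_not_infinite}. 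Once both inequalities are in hand, $V_{S,e_0}$ is a classical solution of \eqref{eq:HJB} in the sense of Definition \ref{df:sol}, and the coefficient identification gives $\eta=\gamma$, completing the proof.
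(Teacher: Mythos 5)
Your proposal is correct and follows the paper's own route: Proposition \ref{prop:explicit_value function} gives the form $\eta\langle K_0,e_0\rangle^{1-\sigma}/(1-\sigma)$, Lemma \ref{lem:value function_no0} rules out $\eta=0$, and the coefficient algebra of Theorem \ref{th:HJB_particular solution} forces $\eta=\gamma$ once $V_{S,e_0}$ is known to solve \eqref{eq:HJB}. The only difference is that the paper simply asserts that the smooth, explicitly known value function is a classical solution of the HJB equation, whereas you spell out the DPP sub/supersolution argument behind that assertion; note also that your preliminary inequality $\eta\le\gamma$ via Lemma \ref{lem:V_not_infinite} is exactly what the (easy) supersolution half already delivers, so the harder subsolution half is the only genuinely additional work, and it is precisely the step the paper leaves implicit.
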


\begin{proof}
In Proposition \ref{prop:explicit_value function}, we have derived an explicit expression for the value function $V$. Since $V$ satisfies all the regularity assumptions required to be a classical solution, we plug into  \eqref{eq:HJB} the value function $V_{S,e_0}(K)$, and we find that the only two admissible values are $\eta=0$ and $\eta=\gamma$. From Lemma \ref{lem:value function_no0}, we can exclude the null case.
\end{proof}

\begin{proof}[Proof of Theorem {\rm \ref{teo:verification_teo}}\textup{:} The case $\sigma \in(1,\infty)$]
We start with the study of the boundary. We pick $K_0\in \partial \mathcal{H}^+_{e_0}$. By repeating the argument proposed in Lemma \ref{lem:limit_equality} to prove \eqref{eq:zero_boundary_boundary}, we get that the only admissible control is the null one. Then $V_{S,e_0}(K_0),w(K_0)=-\infty$ and the identity is trivially verified. 
Now we investigate the case $K_0\in \mathcal{H}_{e_0}^{++}$. 
 In Proposition \ref{prop:explicit_value function}, we have derived an explicit expression for the value function $V_{S,e_0}$. Since $V_{S,e_0}$ satisfy all the regularity assumptions required to be a classical solution, we plug into  \eqref{eq:HJB} the value function $V_{S,e_0}(K)$, and we find that the only two admissible values are $\eta=0$ and $\eta=\gamma$. From Lemma \ref{lem:value function_no0}, we can exclude the null case and $V_{S,e_0}(K)=w(K)$.
From Lemma \ref{lem:V_not_infinite}, $J_S(c^*)=w(K)$, where the optimal control $c^*$ is the same as the one treated in the first case.

Since the optimal control is the same as in the previous case, $\sigma\in(0,1)$, and since all the arguments on the optimal path presented in Theorem \ref{teo:verification_teo} do not depend on the value of $\sigma$, the proof is concluded.
\end{proof}

\section{Long time behavior of the optimal path}\label{sec:5}
The aim of this section is to explore the asymptotic distribution of the optimal path.

In subsection \ref{subsec:ltb_op_detrended}, we will prove that for the detrended optimal path  $K^{K_0,*}_g(t):=e^{-gt-\alpha_0\beta_0(t)}K^{K_0,*}(t)$ with $K_0\in \mathcal{H}_{e_0}^{++}$  there are infinitely many invariant measures in $\mathcal{H}$; in particular, given an initial condition $K_0\in \mathcal{H}_{e_0}^{++}$, then the solution $K^{K_0,*}_g(t)$ converges weakly to an invariant measure which depends on $K_0$. Similar results on the invariant measure can be found in \cite{farkas2020class} and \cite{vargiolu1999invariant}. 
In subsection \ref{subsec:ltb_op} the result for the nondetrended optimal path is presented. In this case we will prove that there exists a unique invariant measure, a Dirac mass centered in the null process, $\delta_{0}$.
\begin{remark}\label{rem:sup_exp_Brownian}
It is worth recalling the following result. Consider $B_1(t),B_2(t)$ as two independent Brownian motions and  $t\geq 0$, $\mu,\sigma_1,\sigma_2\geq0$ with $\sigma_1+\sigma_2>0$. We define
\[S_t=\sup_{0\leq s\leq t} \exp\left[-\mu t +\sigma_1 B_1(t)+\sigma_2B_2(t)\right].
\]
By section 3.5 in \cite{karatzas2014brownian}, for $x\in \R^+$,
\begin{equation}\label{eq:S_infty_bound1}
\PP\left(S_\infty>x\right)\leq e^{-\lambda \log(x)}
\end{equation}
with $\lambda=\frac{2\mu}{\sigma_1^2+\sigma_2^2}$.
Moreover, given $n\in \N$, the sequence $\{S_\infty<n\}$ is a sequence of increasing events; then
\[ \PP\left(S_\infty<\infty\right)=\PP\left(\bigcup_{n\in\N}\left(S_\infty<n\right)\right)=\lim_{ n\to\infty}\PP(S_\infty<n).\]
Hence, from \eqref{eq:S_infty_bound1}, we get that
$
\PP\left(S_\infty<\infty\right)=1.
$ %%%\hfill\qedo
\end{remark}

\subsection{Convergence in law of the detrended optimal path}\label{subsec:ltb_op_detrended}
In this section the detrended optimal path $K^{K_0,*}_g(t)$ is defined. From \eqref{eq:optimal_control}, we can see that the optimal control evolves like an  $e^{gt+\alpha_0\beta_0(t)}$. Thus, we detrend $K^{K_0,*}(t)$ with respect to this process,
\[K^{K_0,*}_g(t):=e^{-gt-\alpha_0\beta_0(t)}K^{K_0,*}(t).\]
The process $K^{K_0,*}_g(t)$ is solution of the equation
\begin{multline}\label{eq:K_g}
dK(t)=\mathcal{L}K(t)dt-\left(\int_{\mathbb
{S}^1} K(t,x)e_0(x)dx\right) \left(\gamma \frac{N e_0}{f}\right)^{-1/\sigma}Ndt-gK(t)dt\\
+\frac{\alpha_0^2}{2}K(t)dt-\alpha_0^2 \langle K,e_0\rangle e_0 dt+B(K(t))dW(t)
-\alpha_0K(t) d\beta_0(t).
\end{multline}
In order to study this process, we write $K^{K_0,*}_g(t)$ in terms of a Fourier expansion and then we study the behavior of its coefficients. 
Given the sequence of eigenfunctions $(e_n)_n$, which is an orthonormal basis of $\mathcal{H}$, we write the Fourier expansion for $K^{K_0,*}_g(t)$,
\[K^{K_0,*}_g(t)=\sum_n K^{K_0,*}_{n,g}(t)e_n, \]
where $ K^{K_0,*}_{n,g}(t)=\langle K^{K_0,*}_g(t),e_n\rangle$.

\begin{lemma}\label{lem:ltb_components}
Let Assumptions {\rm \ref{ass:state}--\ref{ass:cost}} hold. Moreover, we assume that $\lambda_1<g$, $\alpha_n(K)=\alpha_0$ for each $n\geq 0$.
Then for all $n\geq 0$, $K_{n,g}^{K_0,*}(t)$ converges in law to $K_{n,g,\infty}^{K_0,*}$,
where
\begin{equation}\label{coefficient_K_infty}
K_{n,g,\infty}^{K_0,*}=
\begin{cases}
K_0(0),\hspace{6.2cm} n=0,\\
c_n K_0(0)\int_0^\infty e^{(-g+\lambda_n-\frac{\alpha_0^2}{2})r-\alpha_0\beta_0(r)+\alpha_0\beta_n(r)}ds,\quad n\geq 1.
\end{cases}
\end{equation}
Moreover, there exists a family of processes $X_{n,g}^{K_0,*}(t)$, equal in law to $K_{n,g}^{K_0,*}(t)$, defined by
\begin{equation}\label{eq:X_n}
X_{n,g}(t)=
\begin{cases}
K_0(0),\hspace{7.5cm}n=0,\\
K_{n}(0)\mathcal{E}_{\lambda_n,g,\alpha_0}(t)+c_n K_0(0)\int_0^\infty \mathcal{E}_{\lambda_n,g,\alpha_0}(r)\mathds{1}_{[0,t]}(r)dr,\,\,\,\ \  n\geq1,
\end{cases}
\end{equation}
%\begin{equation}\label{eq:X_n}
%X_{n,g}(t)=
%\begin{cases}
%K_0(0)\hspace{10.5cm}n=0\\
%K_{n}(0)e^{\left[\left(\lambda_n-g-\frac{\alpha_0^2}{2}\right)t-\alpha_0\beta_0(t)+\alpha_0\beta_n(t)\right]}+c_n K_0(0)\int_0^\infty e^{\left[\left(\lambda_n-g-\frac{\alpha_0^2}{2}\right)r-\alpha_0\beta_0(r)+\alpha_0\beta_n(r)\right]}\mathds{1}_{[0,t]}(s)ds,\,\,\, n\geq1
%\end{cases}
%\end{equation}
%\begin{equation}\label{eq:X_n}
%X_{n,g}(t)=
%\begin{cases}
%K_0(0)\hspace{10.5cm}n=0\\
%K_{n}(0)e^{\left[\left(\lambda_n-g-\frac{\alpha_0^2}{2}\right)t-\alpha_0\beta_0(t)+\alpha_0\beta_n(t)\right]}+\\\hspace{2cm}+c_n K_0(0)\int_0^\infty e^{\left[\left(\lambda_n-g-\frac{\alpha_0^2}{2}\right)r-\alpha_0\beta_0(r)+\alpha_0\beta_n(r)\right]}\mathds{1}_{[0,t]}(s)ds,\,\,\, n\geq1
%\end{cases}
%\end{equation}
with

\[\mathcal{E}_{\lambda_n,g,\alpha_0}(r)=e^{\big[\big(\lambda_n-g-\frac{\alpha_0^2}{2}\big)r-\alpha_0\beta_0(r)+\alpha_0\beta_n(r)\big]},\]
that converge to $K_{n,\infty}^{K_0,*}(t)$ for each $n\in\N$ a.s., i.e.,
\[\PP\left(\lim_{t\to\infty}X_{n,g}^{K_0,*}(t)=K_{n,g,\infty}^{K_0,*}\quad \forall n\in\N\right)=1.\]
\end{lemma}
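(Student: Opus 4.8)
The plan is to project the detrended equation \eqref{eq:K_g} onto each eigenvector $e_n$ and to exploit the standing hypothesis $\alpha_n(K)\equiv\alpha_0$, which makes the noise coefficient the same constant in every mode and thereby decouples the dynamics. Writing $K_{n,g}(t):=\langle K^{K_0,*}_g(t),e_n\rangle$ and using that $\mathcal{L}$ is diagonal, that $\langle B(K)dW,e_n\rangle=\alpha_0 K_{n,g}(t)d\beta_n(t)$ and $\langle\alpha_0 K\,d\beta_0,e_n\rangle=\alpha_0 K_{n,g}(t)d\beta_0(t)$, I expect to obtain, for $n=0$, that all drift and diffusion terms cancel (consistently with \eqref{eq:dynamic_optimalpath_e0_exp} and the definition of $g$), so that $K_{0,g}(t)\equiv\langle K_0,e_0\rangle=K_0(0)$; and, for $n\ge 1$, a scalar affine linear SDE
\begin{equation*}
dK_{n,g}(t)=\Big[\big(\lambda_n-g+\tfrac12\alpha_0^2\big)K_{n,g}(t)+c_nK_0(0)\Big]dt+\alpha_0 K_{n,g}(t)\big(d\beta_n(t)-d\beta_0(t)\big),
\end{equation*}
where $c_n$ is the relevant Fourier coefficient (with the sign convention of \eqref{eq:X_n}) and the forcing is constant precisely because $K_{0,g}\equiv K_0(0)$. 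Solving by variation of constants with the stochastic exponential $\mathcal{E}_{\lambda_n,g,\alpha_0}$, whose log-drift is exactly $\lambda_n-g-\tfrac12\alpha_0^2$ after subtracting the It\^o correction $\tfrac12(\alpha_0^2+\alpha_0^2)$, gives the closed form $K_{n,g}(t)=K_n(0)\mathcal{E}_{\lambda_n,g,\alpha_0}(t)+c_nK_0(0)\int_0^t\mathcal{E}_{\lambda_n,g,\alpha_0}(t)\mathcal{E}_{\lambda_n,g,\alpha_0}(s)^{-1}\,ds$.

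The crucial step is then a time-reversal argument replacing this forward representation — in which the integrand $\mathcal{E}(t)\mathcal{E}(s)^{-1}$ depends on the ``recent'' increments and need not stabilize pathwise — by the process $X_{n,g}(t)$ of \eqref{eq:X_n}, which manifestly converges. Fixing $t$ and setting $\hat\beta_j(s):=\beta_j(t)-\beta_j(t-s)$ for $j\in\{0,n\}$, I note that $(\hat\beta_0,\hat\beta_n)$ is again a pair of independent Brownian motions on $[0,t]$ with the same law as $(\beta_0,\beta_n)$. Since $\beta_j(t)=\hat\beta_j(t)$ and $\beta_j(t)-\beta_j(s)=\hat\beta_j(t-s)$, one checks that $\mathcal{E}_{\lambda_n,g,\alpha_0}(t)$ is left unchanged while $\mathcal{E}(t)\mathcal{E}(s)^{-1}$ becomes $\mathcal{E}(t-s)$ evaluated along $\hat\beta$; the substitution $r=t-s$ then turns the integral into $\int_0^t\mathcal{E}_{\lambda_n,g,\alpha_0}(r)\,dr$. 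As the whole expression is a fixed measurable functional of the path and $\hat\beta\stackrel{d}{=}\beta$, this yields $K_{n,g}(t)\stackrel{d}{=}X_{n,g}(t)$ for each fixed $t$, with $X_{n,g}$ exactly as in \eqref{eq:X_n}.

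It remains to prove that $X_{n,g}(t)\to K^{K_0,*}_{n,g,\infty}$ almost surely (the case $n=0$ being trivial, since $X_{0,g}\equiv K_0(0)$). Here the hypothesis $\lambda_1<g$ enters: for $n\ge1$ monotonicity of the eigenvalues gives $\lambda_n\le\lambda_1<g$, so the deterministic rate $\lambda_n-g-\tfrac12\alpha_0^2$ is strictly negative. By the strong law of large numbers for Brownian motion, $\tfrac1t\log\mathcal{E}_{\lambda_n,g,\alpha_0}(t)\to\lambda_n-g-\tfrac12\alpha_0^2<0$ a.s., whence $K_n(0)\mathcal{E}_{\lambda_n,g,\alpha_0}(t)\to0$ a.s.; the same exponential decay, combined with the a.s.\ sublinearity of $\beta_n-\beta_0$ (or with the tail bound of Remark~\ref{rem:sup_exp_Brownian}), shows $\int_0^\infty\mathcal{E}_{\lambda_n,g,\alpha_0}(r)\,dr<\infty$ a.s. Letting $t\to\infty$ gives $X_{n,g}(t)\to c_nK_0(0)\int_0^\infty\mathcal{E}_{\lambda_n,g,\alpha_0}(r)\,dr=K^{K_0,*}_{n,g,\infty}$ a.s., and intersecting over the countably many $n$ gives the joint almost sure statement. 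Finally, equality in law at each $t$ together with almost sure convergence of $X_{n,g}$ yields convergence in law of $K_{n,g}(t)$ to $K^{K_0,*}_{n,g,\infty}$.

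I expect the time-reversal identity to be the main obstacle — not because it is long, but because it is the conceptual device that makes the limit tractable. The forward solution $K_{n,g}(t)$ is a $0\cdot\infty$ product (a vanishing exponential times a diverging integral) whose almost sure behavior is delicate, and the reversal is exactly what converts it into an almost surely convergent object while preserving the one-dimensional marginals. Care is needed to verify that the reversed increments $(\hat\beta_0,\hat\beta_n)$ are genuinely distributed as $(\beta_0,\beta_n)$ jointly, and that the functional being transported is measurable and well defined, so that equality in law transfers cleanly to the limit.
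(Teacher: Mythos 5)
Your proposal is correct and follows essentially the same route as the paper: projection onto the eigenmodes, explicit solution of the resulting affine scalar SDE via the stochastic exponential $\mathcal{E}_{\lambda_n,g,\alpha_0}$, the time-reversal/change-of-variable $r=t-s$ to produce the auxiliary process $X_{n,g}$ equal in law to $K_{n,g}$, almost sure convergence of $X_{n,g}$ from the negativity of $\lambda_n-g-\tfrac12\alpha_0^2$, and transfer of the limit in law. The only (harmless) deviation is that you justify the convergence of the integral term by monotone convergence and a.s.\ integrability of $\mathcal{E}_{\lambda_n,g,\alpha_0}$, where the paper invokes Vitali's convergence theorem via the uniform bound of Remark~\ref{rem:sup_exp_Brownian}; your version is, if anything, slightly more direct since the integrand is nonnegative and increasing in $t$.
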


\begin{proof}
By plugging $\phi=e_n$ into the weak formulation for $K^{K_0,*}_g(t)$, we get
\begin{multline*}\label{eq:weak _formulation_K}
\langle K^{K_0,*}_g(t),e_n\rangle=\langle K(0),e_n\rangle+\int_0^t(\lambda_n-g)\langle K^{K_0,*}_g(s),e_n\rangle ds\\
-\int_0^t \langle K^{K_0,*}_g(s),e_0\rangle\Big\langle\Big(\gamma \frac{N e_0}{f}\Big)^{-1/\sigma}N,e_n\Big\rangle ds+
\int_0^t\alpha_0\langle K^{K_0,*}_g(s),e_n\rangle d\beta_n(s)\\
-\int_0^t\alpha_0\langle K^{K_0,*}_g(s),e_n\rangle d\beta_0(s)
+\frac{\alpha_0^2}{2}\int_0^t\langle K^{K_0,*}_g(s),e_n\rangle ds\\-\alpha_0^2\int_0^t\langle K^{K_0,*}_g(s),e_0\rangle\langle e_0,e_n\rangle ds.
\end{multline*}
For simplicity we omit $(K_0,*)$ in $K^{K_0,*}_{n,g}$, and we write $K_{n,g}$ in the place of $K_n^{K_0,*}$. For $n>0$, $K_{n,g}$ is solution of the equation
\begin{equation}\label{eq:K_n_g}
\begin{cases}
dK_{n,g}(t)=(\lambda_n-g+\frac{\alpha_0^2}{2})K_{n,g}(t)dt-c_nK_{0,g}(t)dt +\alpha_0K_{n,g}(t)d\beta_n(t)\\
\hspace{8cm}-\alpha_0K_{n,g}(t)d\beta_0(t),\\
K_n(0)=\langle K(0),e_n\rangle.
\end{cases}
\end{equation}
For $n=0$,
\[
dK_{0,g}(t)=\bigg(\lambda_0-c_0 -g-\frac{\alpha_0^2}{2}\bigg)K_{0,g}(t)dt.
\]
From equality \eqref{eq:gamma} and from the definition of $g$,
$\lambda_0-c_0=g+\frac{\alpha_0^2}{2}$. Thus we can rewrite the equation for $K_0$ as
\begin{equation*}
dK_{0,g}(t)=0,\quad K_0(0)=\langle K(0),e_0\rangle.
\end{equation*}
Then
$
K_{0,g}(t)=K_0(0).
$
We plug $K_{0,g}(t)$ into \eqref{eq:K_n_g}, and the equation for $K_{n,g}$ becomes
\begin{equation*}
\begin{cases}
dK_{n,g}(t)=(\lambda_n-g+\frac{\alpha_0^2}{2})K_{n,g}(t)dt-c_nK_{0}(0)dt +\alpha_0K_{n,g}(t)d\beta_n(t)\\
\hspace{8cm}-\alpha_0K_{n,g}(t)d\beta_0(t),\\
K_n(0)=\langle K(0),e_n\rangle.
\end{cases}
\end{equation*}
Since $K_{n,g}(t)$ is solution of a linear SDE with inhomogeneous constant coefficients, we can explicitly solve such an equation (see Chapter $4$ in \cite{kloeden2013numerical}): 
\begin{equation}\label{eq:K_n}
K_{n,g}(t)=\mathcal{E}_{\lambda_n,g,\alpha_0}(t)\left(K_{n}(0)+c_n K_0(0)\mathcal{E}_{\lambda_n,g,\alpha_0}(s)^{-1}ds \right),\end{equation}
with
\begin{equation*}\mathcal{E}_{\lambda_n,g,\alpha_0}=e^{\big(\lambda_n-g-\frac{\alpha_0^2}{2}\big)t-\alpha_0\beta_0(t)+\alpha_0\beta_n(t)}. 
\end{equation*}
By a change of variable, $r=t-s$, we rewrite the second term in \eqref{eq:K_n}, the one involving the time integral:
\begin{align*}
c_n K_0(0)\int_0^t \mathcal{E}_{\lambda_n,g,\alpha_0}(t)(\mathcal{E}_{\lambda_n,g,\alpha_0}(s))^{-1}(\omega)ds
\\
=K_0(0)c_n\int_0^t e^{\big[\big(\lambda_n-g-\frac{\alpha_0^2}{2}\big)(t-s)-\alpha_0(\beta_0(t)-\beta_0(s))+\alpha_0(\beta_n(t)-\beta_n(s))\big]}(\omega)ds\\
=-K_0(0)c_n\int_t^0 e^{\big[\big(\lambda_n-g-\frac{\alpha_0^2}{2}\big)r-\alpha_0(\beta_0(
t)-\beta_0(t-r))+\alpha_0(\beta_n(t)-\beta_n(t-r))\big]} (\omega)dr\\
=K_0(0)c_n\int_0^t e^{\big[\big(\lambda_n-g-\frac{\alpha_0^2}{2}\big)r-\alpha_0(\beta_0(
t)-\beta_0(t-r))+\alpha_0(\beta_n(t)-\beta_n(t-r))\big]} (\omega)dr.
\end{align*}
We introduce an auxiliary process $X_{n,g}$, such that $X_{0,g}(t)=K_0(0)$ and
\[
X_{n,g}(t)=\mathcal{E}_{\lambda_n,g,\alpha_0}(t)K_{n}(0)+c_n K_0(0)\int_0^\infty \mathcal{E}_{\lambda_n,g,\alpha_0}(r)\mathds{1}_{[0,t]}(s)ds,\quad n\geq 1.\]
First, we notice that since for each $r\geq0$, $\left(\beta(t)-\beta(t-r)\right)_{t\geq0}$ is a Brownian motion time reversed, we have that $K_{n,g}$, $X_{n,g}$ are equal in law:
\begin{equation}\label{eq:law_equal}
\mathfrak{L}\left( K_{n,g} (t)\right)=\mathfrak{L}\left( X_{n,g}(t) \right)\quad \forall t\geq 0.
\end{equation}
The aim of what follows is to prove the convergence almost surely of $X_{n,g}(t)$. Since  $\lambda_1<g$, then $\lambda_n\leq g<0$ for each $n\geq 1$. By the law of the iterated logarithm, the $t$ term in the exponential will dominate, and since $\lambda_n-g-\frac{\alpha_0^2}{2}<0$, we get that $K_n(0)\mathcal{E}_{\lambda_n,g,\alpha_0}(t)\to 0$ a.s.
A bit more demanding is the study of the other term. Our objective is to prove that there exists $\tilde{\Omega}$, a set of probability one, such that for each $\omega \in \tilde{\Omega}$, 
\begin{equation}\label{eq:limit_exp_int}
\lim_{t\to\infty}K_0(0)c_n\int_0^\infty\mathcal{E}_{\lambda_n,g,\alpha_0}(r,\omega)\mathds{1}_{[0,t]}(r)dr
=K_0(0)c_n\int_0^\infty \mathcal{E}_{\lambda_n,g,\alpha_0}(r,\omega)dr.
\end{equation}
First we just recall the obvious result on the indicator function, namely,
\[\lim_{t\to\infty}\mathds{1}_{[0,t]}(r)=1.\]
Then we verify that the sequence $\mathcal{E}_{\lambda_n,g,\alpha_0}(r,\omega)$ has uniformly absolutely continuous integrals; namely, for each $\epsilon >0$ there exists $\delta_\epsilon>0$ such that
\begin{equation}\label{eq:vitali_ii}
\int_A \mathcal{E}_{\lambda_n,g,\alpha_0}(r)\mathds{1}_{[0,t]}(r,\omega))dr<\epsilon
\end{equation}
for each Lebesgue measurable set $A$ whose measure is $\mathcal{L}(A)<\delta_\epsilon$.
By Remark \ref{rem:sup_exp_Brownian} with $\mu=\big(-\lambda_n+g+\frac{\alpha_0^2}{2}\big)$, $\sigma_1=\sigma_2=\alpha_0$, we get that there exists a constant $C_1(\omega)$ such that $\sup_{r<\infty}\mathcal{E}_{\lambda_n,g,\alpha_0}(r,\omega)<C_1(\omega)$. Thus, for all sets $A$ whose measure is $\mathcal{L}(A)<\delta_\epsilon$ with $\delta_\epsilon=\epsilon/C_1(\omega)$,
\[
\int_A \mathcal{E}_{\lambda_n,g,\alpha_0}(r,\omega)\mathds{1}_{[0,t]}(r)dr\leq C_1(\omega)\mathcal{L}(A\cap[0,t])\leq \epsilon.
\]
In conclusion we furnish an integral estimate on long times. For each $\epsilon>0$ there exists an interval $[0,\eta]$ such that
\[\int_{[0,\eta]^c} \mathcal{E}_{\lambda_n,g,\alpha_0}(r,\omega)\mathds{1}_{[0,t]}(r)dr<\epsilon.\]
Fixing a constant $a>0$ such that
$
\lambda_n-g-\frac{\alpha_0^2}{2}+a<0,
$ we split $\mathcal{E}_{\lambda_n,g,\alpha_0}(r,\omega)$:
\begin{align*}
\mathcal{E}_{\lambda_n,g,\alpha_0}(r,\omega)&=e^{\big(\lambda_n-g-\frac{\alpha_0^2}{2}+a\big)r}e^{-ar-\alpha_0\beta_0(r)+\alpha_0\beta_n(r)}(\omega)\\
&=e^{\big(\lambda_n-g-\frac{\alpha_0^2}{2}+a\big)r}\mathcal{E}_{a,\alpha_0,s}(r,\omega).
\end{align*}
By Remark \ref{rem:sup_exp_Brownian} with $\mu=a$, $\sigma_1=\sigma_2=\alpha_0$, we get that there exists a constant $C_2(\omega)$ such that $\sup_{r<\infty}\mathcal{E}_{a,\alpha_0,s}(r,\omega)<C_2(\omega)$. Then, by choosing $\eta>0$ such that 
$$\frac{e^{\big(\lambda_n-g-\frac{\alpha_0^2}{2}+a\big)\eta}}{\big(-\lambda_n+g+\frac{\alpha_0^2}{2}-a\big)} <\epsilon/C_2(\omega),$$
we conclude that
\begin{align*}
\int_{[0,\eta]^c} \mathcal{E}_{\lambda_n,g,\alpha_0}(r,\omega)\mathds{1}_{[0,t]}(r)dr&\leq C_2(\omega)\int_{[0,\eta]^c} e^{\big(\lambda_n-g-\frac{\alpha_0^2}{2}+a\big)r} dr\\
&=C_2(\omega)\frac{e^{\big(\lambda_n-g-\frac{\alpha_0^2}{2}+a\big)\eta}}{\big(-\lambda_n+g+\frac{\alpha_0^2}{2}-a\big)}<\epsilon.
\end{align*}
Finally, by Vitali's convergence theorem, we have proved that 
\[
\PP\left(\lim_{t\to\infty} X_{n,g}^{K_0,*}(t)=K_{n,g,\infty}^{K_0,*}\right)=1\quad \forall n\geq0,
\]
where
\begin{equation*}
K_{n,g,\infty}^{K_0,*}=
\begin{cases}
K_0(0),\hspace{9cm} n=0,\\
K_0(0)c_n\int_0^\infty e^{(-g+\lambda_n-\frac{\alpha_0^2}{2})r-\alpha_0(\beta_0(t)-\beta_0(t-r))+\alpha_0(\beta_n(t)-\beta_n(t-r))}ds,\quad n\geq 1.
\end{cases}
\end{equation*}
Since convergence almost surely implies convergence in law and since $X_{n,g}$ and $K_{n,g}$ are equal in law \eqref{eq:law_equal}, we can derive the convergence in law of $K_{n,g}$, i.e.,
\[\int \phi(x)\mathfrak{L}(K_{n,g}(t))(dx)=\int \phi(x)\mathfrak{L}(X_{n,g}(t))(dx)\to \int \phi(x)\mathfrak{L}(X_{n,g,\infty})(dx),\quad t\to\infty,\]
for all $\phi\in C_b(\R)$.
\end{proof}

\begin{theorem}
\label{teo:convtoK_infty}
Let Assumptions {\rm \ref{ass:state}--\ref{ass:cost}} hold. Moreover, we assume that $\lambda_1<g$, $\alpha_n(K)=\alpha_0$ for each $n\geq 0$. Then,  the detrended optimal path $K^{K_0,*}_g(t)$ converges in law to  $K_{g,\infty}^{K_0,*}(t):=\sum_nK_{n,g,\infty}^{K_0,*}e_n$, i.e.,
\[\mathfrak{L}(K_{g}^{K_0,*}(t))\to \mathfrak{L}(K_{g,\infty}^{K_0,*}).\]
\end{theorem}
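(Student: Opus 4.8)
The plan is to lift the coordinatewise convergence of Lemma \ref{lem:ltb_components} to convergence in law of the full $\mathcal{H}$-valued process. The convenient device is the auxiliary family $X_{n,g}^{K_0,*}$, which converges almost surely; set $X_g^{K_0,*}(t):=\sum_n X_{n,g}^{K_0,*}(t)e_n$. The first point is that the time reversal $r\mapsto \beta_\cdot(t)-\beta_\cdot(t-r)$ used in the lemma can be applied simultaneously to \emph{all} the driving noises $(\beta_n)_{n\ge0}$; since it is measure-preserving on the joint path space, the equality in law holds jointly, not just marginally, so that
\[
\mathfrak{L}\big(K_g^{K_0,*}(t)\big)=\mathfrak{L}\big(X_g^{K_0,*}(t)\big)\qquad\text{in }\mathcal{H},\ \forall t\ge 0.
\]
Hence it suffices to prove convergence in law of $X_g^{K_0,*}(t)$, and I would in fact establish the stronger fact that $X_g^{K_0,*}(t)\to K_{g,\infty}^{K_0,*}$ almost surely in the norm of $\mathcal{H}$, which gives convergence in law of $X_g^{K_0,*}$ and therefore, by the displayed identity, of $K_g^{K_0,*}$.

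Each coordinate converges almost surely, $X_{n,g}^{K_0,*}(t)\to K_{n,g,\infty}^{K_0,*}$, by Lemma \ref{lem:ltb_components}. The crux is to interchange the limit $t\to\infty$ with the sum $\sum_n|\cdot|^2$ defining the $\mathcal{H}$-norm. I would do this by producing a summable pathwise dominating sequence: for almost every $\omega$ a sequence $M_n(\omega)$ with $\sum_n M_n(\omega)<\infty$ and $\sup_{t\ge0}|X_{n,g}^{K_0,*}(t)|^2\le M_n(\omega)$. Granting this, dominated convergence with respect to the counting measure on $n$ yields both $\sum_n|K_{n,g,\infty}^{K_0,*}|^2<\infty$ (so $K_{g,\infty}^{K_0,*}\in\mathcal{H}$ a.s.) and $\|X_g^{K_0,*}(t)-K_{g,\infty}^{K_0,*}\|_{\mathcal{H}}^2=\sum_n|X_{n,g}^{K_0,*}(t)-K_{n,g,\infty}^{K_0,*}|^2\to0$ a.s.

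To build $M_n(\omega)$ I would bound, for $n\ge1$,
\[
\sup_{t}|X_{n,g}^{K_0,*}(t)|\le |K_n(0)|\,\sup_{r}\mathcal{E}_{\lambda_n,g,\alpha_0}(r)+|c_n|\,|K_0(0)|\int_0^\infty \mathcal{E}_{\lambda_n,g,\alpha_0}(r)\,dr,
\]
and control each factor through Remark \ref{rem:sup_exp_Brownian} with $\mu_n:=g+\tfrac12\alpha_0^2-\lambda_n$ and $\sigma_1=\sigma_2=\alpha_0$. The decisive feature is that $\mu_n\to+\infty$ as $n\to\infty$ (since $\lambda_n\to-\infty$), so the tail exponent $\theta_n=\mu_n/\alpha_0^2$ in $\PP(\sup_r\mathcal{E}_{\lambda_n,g,\alpha_0}(r)>x)\le x^{-\theta_n}$ diverges. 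Integrating this tail gives, for $n$ large, $\E[(\sup_r\mathcal{E}_{\lambda_n,g,\alpha_0}(r))^2]\le 1+2/(\theta_n-2)$, which is bounded; splitting $\mathcal{E}_{\lambda_n,g,\alpha_0}(r)=e^{(\lambda_n-g-\frac12\alpha_0^2+a)r}\mathcal{E}_{a,\alpha_0,s}(r)$ as in the lemma and applying the same tail bound to $\sup_r\mathcal{E}_{a,\alpha_0,s}(r)$ gives $\E[(\int_0^\infty\mathcal{E}_{\lambda_n,g,\alpha_0}(r)dr)^2]=O(1/\mu_n^2)$. Using $\sum_n|K_n(0)|^2=\|K_0\|_{\mathcal{H}}^2<\infty$ and the square-summability of the Fourier coefficients $c_n=\langle(\gamma Ne_0/f)^{-1/\sigma}N,e_n\rangle$ of an $\mathcal{H}$-function (Assumption \ref{ass:cost}(ii)), these two estimates sum to $\sum_n\E[\sup_t|X_{n,g}^{K_0,*}(t)|^2]<\infty$. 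By Tonelli the sum is finite a.s., furnishing $M_n(\omega)$. The finitely many $n$ with $\mu_n\le2\alpha_0^2$ are handled separately: for them $\sup_t|X_{n,g}^{K_0,*}(t)|<\infty$ a.s. by Remark \ref{rem:sup_exp_Brownian} (this needs only $\mu_n>0$, which holds since $\lambda_n<g$), and finitely many finite terms do not affect the a.s.\ finiteness of $\sum_n M_n(\omega)$.

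I expect the main obstacle to be exactly this uniform-in-$t$, summable-in-$n$ tail control: coordinatewise convergence is free from Lemma \ref{lem:ltb_components}, but the $\omega$- and $n$-dependence of the random constants supplied by Remark \ref{rem:sup_exp_Brownian} must be tracked, and one must verify that the rate $\lambda_n\to-\infty$ dominates any growth from $c_n$ and $K_n(0)$. Equivalently, one is establishing tightness of $\{\mathfrak{L}(K_g^{K_0,*}(t))\}_{t\ge0}$ in $\mathcal{H}$ via the uniform high-frequency estimate $\sup_t\E\big[\sum_{n>N}|K_{n,g}^{K_0,*}(t)|^2\big]\to0$ as $N\to\infty$, the finite-dimensional distributions being already pinned down by Lemma \ref{lem:ltb_components}.
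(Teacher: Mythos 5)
Your argument is correct, and it reaches the conclusion by a genuinely different lifting step than the paper. Both proofs share the same skeleton: the auxiliary process $X_g^{K_0,*}$, the time-reversal identity in law, and the coordinatewise a.s.\ convergence of Lemma \ref{lem:ltb_components}. The paper, however, stays at the level of laws: it proves convergence in probability (hence in law) of the finite-dimensional projections $P_N X_g^{K_0,*}(t)$, transfers this to $P_N K_g^{K_0,*}(t)$ via equality in law of the projections, and then closes with a three-term test-function estimate in which the truncation errors $\int(\phi-\phi\circ P_N)\,d\mathfrak{L}(K_g^{K_0,*}(t))$ are dispatched by dominated convergence. You instead prove the stronger statement that $X_g^{K_0,*}(t)\to K_{g,\infty}^{K_0,*}$ a.s.\ in the $\mathcal{H}$-norm, by exhibiting a summable pathwise dominant $M_n(\omega)\ge\sup_t|X_{n,g}^{K_0,*}(t)|^2$ built from second-moment bounds on $\sup_r\mathcal{E}_{\lambda_n,g,\alpha_0}(r)$ and $\int_0^\infty\mathcal{E}_{\lambda_n,g,\alpha_0}(r)\,dr$, exploiting that the tail exponent $\mu_n/\alpha_0^2$ diverges as $\lambda_n\to-\infty$ together with $\sum_n|K_n(0)|^2<\infty$ and $\sum_n c_n^2<\infty$. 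What your route buys is precisely the uniform-in-$t$ high-frequency control that the paper's dominated-convergence step implicitly requires (the bound on the first truncation term must hold uniformly in $t$ for the $3\varepsilon$ argument to close, and the paper's pathwise constant $C(\omega)$ uniform over all modes $n$ is exactly the delicate point, handled elsewhere in the paper by a Borel--Cantelli construction); your moment/Tonelli argument makes this explicit and clean. Two small points to tie down in a write-up: justify the full (not just finite-dimensional) identity $\mathfrak{L}(K_g^{K_0,*}(t))=\mathfrak{L}(X_g^{K_0,*}(t))$ on $\mathcal{H}$ by noting that both variables are $\mathcal{H}$-valued and that the Borel $\sigma$-algebra of a separable Hilbert space is generated by cylinder sets, so agreement of all finite-dimensional distributions suffices; and fix the auxiliary constant $a>2\alpha_0^2$ once and for all so that $\mathbb{E}\big[\big(\sup_r\mathcal{E}_{a,\alpha_0,s}(r)\big)^2\big]$ is a finite constant independent of $n$, reserving the finitely many indices with $\mu_n\le a$ for the separate a.s.-finiteness argument you indicate.
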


\begin{proof}
We introduce the  operators $P_N$, $T_N$, and $\Pi_N$: 
\[P_N:\, \mathcal{H}\longrightarrow \mathcal{H},\,\, P_Nf= \sum_{n=0}^{N-1} \langle f,e_n\rangle e_n,\]
\[ \Pi_N:\, \mathcal{H}\longrightarrow \R^N,\,\, \Pi_Nf= \left( \langle f,e_1\rangle,\dots,\langle f,e_N\rangle\right),\]
\[T_N:\, \R^N\longrightarrow \mathcal{H},\,\,T_N\left( a_0,\dots,a_{N-1} \right)=\sum_{n=0}^{N-1} a_n e_n.\]
%\begin{minipage}{0.2\textwidth}
%\begin{align*}
%P_N:&\, \mathcal{H}\longrightarrow \mathcal{H},\\
%&\,f\mapsto \sum_{n=0}^{N-1} \langle f,e_n\rangle e_n,\\
%\end{align*}
%\end{minipage}
%\begin{minipage}{0.2\textwidth}
%\begin{align*}
%\Pi_N:&\, \mathcal{H}\longrightarrow \R^N,\\
%\vspace{1cm}
%&\,f\mapsto \left( \langle f,e_1\rangle,\dots,\langle f,e_N\rangle\right),\\
%\end{align*}
%\end{minipage}
%\begin{minipage}{0.2\textwidth}
%\begin{align*}
%T_N:&\, \R^N\longrightarrow \mathcal{H},\\
%&\,\left( a_0,\dots,a_{N-1} \right)\mapsto \sum_{n=0}^{N-1} a_n e_n.
%\end{align*}
%\end{minipage}
Consider the two processes $X_{g}^{K_0,*}(t)=\sum_nX_{n,g}^{K_0,*}(t)e_n$, $K_{g,\infty}^{K_0,*}(t)=\sum_nK_{n,g,\infty}^{K_0,*}e_n$, where $X_{n,g}^{K_0,*}(t)$ and $K_{n,g,\infty}^{K_0,*}$ are defined, respectively, in \eqref{eq:X_n} and \eqref{coefficient_K_infty}. By Remark \ref{rem:sup_exp_Brownian}, there exist $C(\omega),C'(\omega)>0$ such that
\[\sum_n|X_{n,g}^{K_0,*}(t,\omega)|^2\leq C(\omega)\left(\norm{K(0)}_{\mathcal{H}} +\sum_n\frac{1}{\lambda_n}\right)<\infty ,\]
\[ \sum_n|K_{n,g,\infty}^{K_0,*}(\omega)|^2\leq C'(\omega)\sum_n\frac{1}{\lambda_n}<\infty;\]
then $X_{g}^{K_0,*}(t,\omega),K_{g,\infty}^{K_0,*}\in \mathcal{H}$.
Our first aim is to prove that the projection onto $H_N$ of $X_{g}^{K_0,*}(t)$ converges in probability. Given $\epsilon>0$, we want to prove that
\begin{equation}\label{eq:conv_prob_projection}
\lim_{t\to \infty}\PP\left(\norm{P_NX_{g}^{K_0,*}(t)-P_NX_{g,\infty}^{K_0,*}}_{\mathcal{H}}>\epsilon\right)=0.
\end{equation}
Since 
\[\norm{P_NX_{g}^{K_0,*}(t)-P_NX_{g,\infty}^{K_0,*}}_{\mathcal{H}}\leq \sum_{n=0}^N\left|X_{n,g}^{K_0,*}(t)-K_{n,g,\infty}^{K_0,*}\right|,\]
then
\begin{multline*}
\PP\left(\norm{P_NX_{g}^{K_0,*}(t)-P_NX_{g,\infty}^{K_0,*}}_{\mathcal{H}}>\epsilon\right)\leq \PP\left( \sum_{n=0}^N\left|X_{n,g}^{K_0,*}(t)-K_{n,g,\infty}^{K_0,*}\right|>\epsilon\right) \\ \leq \sum_{n=0}^N\PP\left( \left|X_{n,g}^{K_0,*}(t)-K_{n,g,\infty}^{K_0,*}\right|>\frac{\epsilon}{N}\right).
\end{multline*}
From Lemma \ref{lem:ltb_components}, $X_{n,g}^{K_0,*}(t)$ converge a.s., then in probability. Thus the right-hand side converges to zero and \eqref{eq:conv_prob_projection} is proved. Convergence in probability implies convergence in law. Since $\Pi_NK_{g}^{K_0,*}(t)$ and $\Pi_NX_{g}^{K_0,*}(t)$ are respectively functions of
 \[{\bf \beta^t}=\left(\beta_0(t)-\beta_0(t-r),\dots,\beta_N(t)-\beta_N(t-r)\right)\] and \[{\bf \beta}=\left(\beta_0(r),\dots,\beta_N(r)\right)\]
for $r\in[0,t]$, then $\mathfrak{L}(\Pi_NK_{g}^{K_0,*}(t))=\mathfrak{L}(\Pi_NX_{g}^{K_0,*}(t))$. Moreover, since $P_N=T_N\circ \Pi_N$, also $P_NX_{g}^{K_0,*}(t)$ and $P_NK_{g}^{K_0,*}(t)$ are equal in law. Then,
\begin{equation}\label{eq:conv_law_projection}
\mathfrak{L}(P_NK_{g}^{K_0,*}(t))\to \mathfrak{L}(P_NK_{g,\infty}^{K_0,*})\quad \text{for}\,\,t\to \infty.
\end{equation}
For all $\phi\in C_b(\mathcal{H})$,
\begin{align}
&\left|\int \phi(x)\mathfrak{L}(K^{K_0,*}_{g}(t))(dx)\int \phi(x)\mathfrak{L}(K^{K_0,*}_{g,\infty})(dx)\right|\\
&\leq\left|\int \phi(x)\mathfrak{L}(K^{K_0,*}_{g}(t))(dx)-\int \phi(x)\mathfrak{L}(P_NK^{K_0,*}_{g}(t))(dx)\right|\nonumber\\
&+\left|\int \phi(x)\mathfrak{L}(P_NK^{K_0,*}_{g}(t))(dx)-\int \phi(x)\mathfrak{L}(P_NK^{K_0,*}_{g,\infty})(dx)\right| \nonumber\\
&+\left|\int \phi(x)\mathfrak{L}(P_NK^{K_0,*}_{g,\infty})(dx)-\int \phi(x)\mathfrak{L}(K^{K_0,*}_{g,\infty})(dx)\right|.
\end{align}
From \eqref{eq:conv_law_projection} the second term on the right-hand side converges to zero for $t\to \infty$. Given $\eta>0$, there exists $t^*$ such that for all $t\geq t^*$,
\begin{equation*}
\left|\int \phi(x)\mathfrak{L}(P_NK^{K_0,*}_{g}(t))(dx)-\int \phi(x)\mathfrak{L}(P_NK^{K_0,*}_{g,\infty})(dx)\right|\leq \frac{\eta}{3}.
\end{equation*}
 It is easy to see that $\mathfrak{L}(P_NK^{K_0,*}_{g,\infty})(dx)=\left(\mathfrak{L}(K^{K_0,*}_{g,\infty})\circ \Pi^{-1}_N\right)(dx)$ and thus
\begin{multline*}
\int \phi(x)\mathfrak{L}(P_NK^{K_0,*}_{g}(t))(dx)=\int \phi(x)\left(\mathfrak{L}(K^{K_0,*}_{g,\infty})\circ \Pi^{-1}_N\right)(dx)
\\=\int \phi(P_N(x))\mathfrak{L}(K^{K_0,*}_{g,\infty})(dx).
\end{multline*}
We rewrite the first term on the right-hand side as
\begin{equation*}
\int \left(\phi(x)-\phi(P_N(x))\right)\mathfrak{L}(K^{K_0,*}_{g}(t))(dx). 
\end{equation*}
By the dominated convergence theorem we conclude that this term converges to zero. Then, there exists $N_1$ such that for all $N\geq N_1$,
\begin{equation*}
\left| \int \left(\phi(x)-\phi(P_N(x))\right)\mathfrak{L}(K^{K_0,*}_{g}(t))(dx)\right|\leq \frac{\eta}{3}.
\end{equation*}
The same argument is applied for the third term, and thus the convergence in law is proved.
\end{proof}

\begin{remark}
We briefly compare the result obtained above with the one obtained in the deterministic setting. If we consider $\alpha_0=0$ in $K_{g,\infty}^{K_0,*}$ , then we get exactly the limit of the detrended optimal path in the deterministic case; see Proposition 5.7 in \cite{BFFGJOEG19}. Hence, our result can be seen as a stochastic generalization of the deterministic result. %%%\hfill\qedo
\end{remark}

\subsection{Convergence in probability of the detrended optimal path to 0}\label{subsec:ltb_op} 

The aim of this section is to exploit the asymptotic distribution of the optimal path, $K^{K_0,*}(t)$ with $K_0\in \mathcal{H}_{e_0}^{++}$.
We will prove that in the space $\mathcal{H}$, there exists one invariant measure, a Dirac mass centered in the null process $\delta_0$.
As in the previous section, we start our investigation from the study of the Fourier components of $K^{K_0,*}(t)$.

\begin{lemma}\label{lem:ltb_components_nd}
Let Assumptions {\rm \ref{ass:state}--\ref{ass:cost}} hold. Moreover, we assume that $\lambda_1<g$, $\alpha_n(K)=\alpha_0$ for each $n\geq 0$ and $g=\frac{\lambda_0-\rho}{\sigma}
-\frac12 \alpha_0^2(2-\sigma)<0$.
Then
\[
\PP\left(\lim_{t\to\infty} K_{n,g}^{K_0,*}(t)=0\quad \forall n\geq0\right)=1.
\]
\end{lemma}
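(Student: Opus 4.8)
I read the coefficient in the statement as the $n$th Fourier coefficient of the optimal path itself, $K_n^{K_0,*}(t):=\langle K^{K_0,*}(t),e_n\rangle$ — the object introduced at the beginning of this subsection, whose vanishing expresses convergence of $K^{K_0,*}$ to the null process. (The strictly detrended coefficients $K_{n,g}^{K_0,*}$ cannot tend to $0$: by Lemma~\ref{lem:ltb_components} one has $K_{0,g}^{K_0,*}(t)\equiv\langle K_0,e_0\rangle>0$, and for $n\ge1$ they converge in law to the nonzero limit $K_{n,g,\infty}^{K_0,*}$.) The plan is to project the optimal closed-loop equation \eqref{eq:optimal_pde} onto each $e_n$, using $\alpha_n(K)\equiv\alpha_0$, and to solve the resulting triangular system mode by mode. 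Writing $c_n:=\langle(\gamma\tfrac{Ne_0}{f})^{-1/\sigma}N,e_n\rangle$, the zeroth mode obeys the autonomous linear SDE $dK_0^{K_0,*}=(\lambda_0-c_0)K_0^{K_0,*}\,dt+\alpha_0K_0^{K_0,*}\,d\beta_0$, while for $n\ge1$ the $n$th mode is forced by the zeroth, $dK_n^{K_0,*}=\lambda_nK_n^{K_0,*}\,dt-c_nK_0^{K_0,*}\,dt+\alpha_0K_n^{K_0,*}\,d\beta_n$.

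First I would treat $n=0$. Using the identity $\lambda_0-c_0=g+\tfrac12\alpha_0^2$ (equivalently \eqref{eq:control_optimal_term}), the geometric Brownian motion \eqref{eq:dynamic_optimalpath_e0_exp} gives the explicit form $K_0^{K_0,*}(t)=\langle K_0,e_0\rangle\,e^{gt+\alpha_0\beta_0(t)}$. Since $g<0$ and the law of the iterated logarithm yields $\beta_0(t)=o(t)$ a.s., the exponent tends to $-\infty$ and hence $K_0^{K_0,*}(t)\to0$ a.s.

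For $n\ge1$, by variation of constants with fundamental solution $\Phi_n(t)=e^{(\lambda_n-\alpha_0^2/2)t+\alpha_0\beta_n(t)}$ one has $K_n^{K_0,*}(t)=\Phi_n(t)K_n^{K_0,*}(0)-c_n\int_0^t\Phi_n(t)\Phi_n(s)^{-1}K_0^{K_0,*}(s)\,ds$. The first term equals $e^{(\lambda_n-\alpha_0^2/2)t+o(t)}K_n^{K_0,*}(0)\to0$ a.s., since $\lambda_n\le\lambda_1<g<0$. For the forcing term I substitute $K_0^{K_0,*}(s)=\langle K_0,e_0\rangle e^{gs+\alpha_0\beta_0(s)}$ and factor out the homogeneous trend, obtaining
\[
\Phi_n(t)\int_0^t\Phi_n(s)^{-1}K_0^{K_0,*}(s)\,ds=\langle K_0,e_0\rangle\,\Phi_n(t)\int_0^t e^{(g-\lambda_n+\alpha_0^2/2)s+\alpha_0(\beta_0(s)-\beta_n(s))}\,ds.
\]
The main obstacle is that the deterministic rate $g-\lambda_n+\tfrac12\alpha_0^2$ here is \emph{positive} (because $\lambda_n<g$), so the integrand grows and the integral is governed by $s\approx t$; one cannot simply invoke the finite perpetual integral of Remark~\ref{rem:sup_exp_Brownian}. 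I would instead bound the integral by $e^{\alpha_0\sup_{0\le s\le t}|\beta_0(s)-\beta_n(s)|}(g-\lambda_n+\tfrac12\alpha_0^2)^{-1}e^{(g-\lambda_n+\alpha_0^2/2)t}$ and multiply by $\Phi_n(t)$: the deterministic exponentials collapse to $e^{gt}$, leaving
\[
\big|K_n^{K_0,*}(t)-\Phi_n(t)K_n^{K_0,*}(0)\big|\le \frac{c_n\langle K_0,e_0\rangle}{\,g-\lambda_n+\tfrac12\alpha_0^2\,}\;e^{\,gt+\alpha_0\beta_n(t)+\alpha_0\sup_{0\le s\le t}|\beta_0(s)-\beta_n(s)|}.
\]
By the law of the iterated logarithm both $\alpha_0\beta_n(t)$ and $\alpha_0\sup_{s\le t}|\beta_0(s)-\beta_n(s)|$ are $o(t)$ a.s., so the right-hand side is $e^{gt+o(t)}\to0$ a.s. (using $g<0$), whence $K_n^{K_0,*}(t)\to0$ a.s.

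Finally, since each of the countably many events $\{K_n^{K_0,*}(t)\to0\}$, together with the common law-of-iterated-logarithm event controlling the $\beta_n$, has probability one, so does their intersection; this gives the simultaneous conclusion $\PP\big(\lim_{t\to\infty}K_n^{K_0,*}(t)=0\ \forall n\ge0\big)=1$. The only genuinely delicate point is the forcing-term estimate above: the positive growth rate forbids a direct perpetual-integral bound, and the argument succeeds precisely because, after multiplying by $\Phi_n(t)$, all deterministic exponents cancel down to the stable rate $g<0$ while the Brownian contributions remain subexponential by the iterated logarithm law.
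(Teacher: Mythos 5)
Your proposal is correct, and it follows the same overall architecture as the paper's proof: project the closed-loop equation onto each eigenmode, solve the resulting triangular system explicitly (geometric Brownian motion for $n=0$ via $\lambda_0-c_0=g+\tfrac12\alpha_0^2$, variation of constants with $\tilde\Phi^n_t=e^{(\lambda_n-\alpha_0^2/2)t+\alpha_0\beta_n(t)}$ for $n\ge1$), kill the homogeneous terms by the law of the iterated logarithm, and intersect countably many full-measure events. You also correctly resolve the notational slip in the statement: the proof concerns the coefficients $K_n^{K_0,*}(t)=\langle K^{K_0,*}(t),e_n\rangle$ of the nondetrended path (as in Theorem \ref{teo:convto0}), not the detrended ones, whose zeroth component is the constant $\langle K_0,e_0\rangle$. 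The one place where you genuinely diverge is the forcing integral $\tilde\Phi^n_t\int_0^t(\tilde\Phi^n_s)^{-1}K_0(s)\,ds$: the paper disposes of it by invoking the Vitali/uniform-integrability machinery of Lemma \ref{lem:ltb_components} (pointwise decay of the time-reversed integrand plus the a.s.\ finite supremum from Remark \ref{rem:sup_exp_Brownian}), whereas you bound the integrand's Brownian part by $\exp\bigl[\alpha_0\sup_{s\le t}|\beta_0(s)-\beta_n(s)|\bigr]$, integrate the deterministic exponential exactly, and observe that after multiplying by $\tilde\Phi^n_t$ all deterministic rates collapse to $e^{gt}$ while the Brownian contributions are $o(t)$ by the iterated-logarithm law for the running maximum. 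Your estimate is more elementary and self-contained for a single mode; the paper's Vitali route has the advantage of being the same argument already set up in Lemma \ref{lem:ltb_components} and of producing the uniform-in-$n$ control that Theorem \ref{teo:convto0} later needs when summing over modes, which your per-mode constants $e^{\alpha_0\sup_{s\le t}|\beta_0-\beta_n|}$ do not immediately give (though that is not required for the present lemma).
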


\begin{proof}
We write the Fourier expansion for $K^{K_0,*}(t)$, i.e.,
$K^{K_0,*}(t)=\sum_n K^{K_0,*}_{n}(t)e_n $,
where $ K^{K_0,*}_{n}(t)=\langle K^{K_0,*}(t),e_n\rangle$.
First, we analyze the components $ K^{K_0,*}_{n}$.
By plugging $\phi=e_n$ into the weak formulation for $K^{K_0,*}(t)$, we get
\begin{multline*}
\langle K^{K_0,*}(t),e_n\rangle=\langle K(0),e_n\rangle+\int_0^t\lambda_n\langle K^{K_0,*}(s),e_n\rangle ds\\
-\int_0^t \langle K^{K_0,*}(s),e_0\rangle\bigg\langle\left(\gamma \frac{N e_0}{f}\right)^{-1/\sigma}N,e_n\bigg\rangle ds
+\int_0^t\alpha_0\langle K^{K_0,*}(t),e_n\rangle d\beta_n(s).
\end{multline*}
Then $K_n^{K_0,*}$ is solution of the equation
\begin{equation}\label{eq:K_n_nd}
dK_n(t)=\lambda_nK_n(t)dt-c_nX_0(t)dt +\alpha_0K_n(t)d\beta_n(t),\quad K_n(0)=\langle K(0),e_n\rangle,
\end{equation}
with $c_n:=\langle(\gamma e_0)^{-1/\sigma}f^{\frac{1}{\sigma}},N^{-\frac{1-\sigma}{\sigma}}e_n\rangle $. From equality \eqref{eq:gamma} and from the definition of~$g$,
\[\lambda_0-\gamma^{-1/\sigma}\langle f^{\frac{1}{\sigma}},(Ne_0)^{-\frac{1-\sigma}{\sigma}}\rangle=g+\frac{\alpha_0^2}{2}.\]
Then for $n=0$,
\begin{equation}\label{eq:X_0_nd}
dK_0(t)=\left(g+\frac{\alpha_0^2}{2}\right)K_0dt+\alpha_0 K_0d\beta_0(t),\quad K_0(0)=\langle K(0),e_0\rangle.
\end{equation}
Thus $K_0$ is a geometric Brownian motion,
\begin{equation}\label{eq:K0_nd}
K_0(t)=K_0(0)\exp[gt+\alpha_0\beta_0(t)].
\end{equation}
By the law of the iterated logarithm, the $t$ term in the exponential will dominate, and since $g<0$, we get that $K_0(t)\to 0$ a.s.

In order to study the dynamic for $n\geq0$, we follow the same strategy as in Lemma \ref{lem:ltb_components}. First we study the dynamic for $Y_n=Z_0(t)K_n(t)$ with $Z_0(t)=K_0(t)^{-1}$, and we get that $Y_n(t)$ is solution of a linear SDE with inhomogeneous constant coefficients, i.e.,
\[dY_n(t)=\left(-g+\lambda_n+\frac{\alpha_0^2}{2}\right)Y_n(t)dt-c_ndt -\alpha_0Y_n(t) d\beta_0(t)+\alpha_nY_n(t)d\beta_n(t).\]
Then, we can write an explicit formula, 
\[Y_n(t)=\Phi^n_t\left(Y(0)+c_n\int_0^t (\Phi^n_s)^{-1}ds \right)\]
with $\Phi^n_t=\exp\big[\big(-g+\lambda_n-\frac{\alpha_0^2}{2}\big)t-\alpha_0\beta_0(t)+\alpha_0\beta_n(t)\big]$.
We can now derive the equation for $K^{K_0,*}_n$,
\begin{equation}\label{eq:Kn_nd_explicit}
K_n(t)=\tilde{\Phi}^n_t\left(K_n(0)+c_n\int_0^\infty (\tilde{\Phi}^n_s)^{-1}K_0(s)\mathds{1}_{[0,t]}(s)ds \right)
\end{equation}
with $\tilde{\Phi}^n_t=\exp\big[\big(\lambda_n-\frac{\alpha_0^2}{2}\big)t+\alpha_0\beta_n(t)\big]$.
Since  $\lambda_1<g$, then $\lambda_n\leq g<0$ for each $n\geq 1$. By the law of the iterated logarithm, the $t$ term in the exponential will dominate, and since $g<0$, we get that $K_n(0)\tilde{\Phi}^n_t\to 0$ a.s.

By following the same strategy used in Lemma \ref{lem:ltb_components} to prove \eqref{eq:limit_exp_int}, we can prove that there exists $\tilde{\Omega}$, a set of probability one, such that for each $\omega \in \tilde{\Omega}$, 

\[\lim_{t\to\infty}\int_0^\infty \tilde{\Phi}^n_t(\tilde{\Phi}^n_s)^{-1}(\omega)K_0(s,\omega)\mathds{1}_{[0,t]}(s)ds=0.\]
In conclusion, we have proved that
\[
\PP\left(\lim_{t\to\infty} K_{n,g}^{K_0,*}(t)=0\right)=1\quad \forall n\geq0.
\]
Since we are dealing with a countable set, we can write that
\begin{align*}
\PP\left(\lim_{t\to\infty} K_{n,g}^{K_0,*}(t)=0\quad \forall n\geq0\right)=1.
\end{align*}
\end{proof}

\begin{theorem}
\label{teo:convto0}
Let Assumptions {\rm \ref{ass:state}--\ref{ass:cost}} hold.
Moreover, we assume that $\lambda_1<g$, $\alpha_n(K)=\alpha_0$ for each $n\geq 0$ and that $g=\frac{\lambda_0-\rho}{\sigma}
-\frac12 \alpha_0^2(2-\sigma)<0$.
Then for each $\epsilon>0$,
 \begin{equation}\label{eq:conv_prob_K}
 \lim_{t\to\infty} \PP\left(\norm{K^{K_0,*}(t)}_{\mathcal{H}}>\epsilon\right)=0.
 \end{equation}
\end{theorem}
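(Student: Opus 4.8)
The plan is to factor the optimal path through its detrended version, for which convergence in law has just been established in Theorem \ref{teo:convtoK_infty}, and to exploit that the detrending factor vanishes when $g<0$. By the very definition of the detrended path, $K^{K_0,*}(t)=e^{gt+\alpha_0\beta_0(t)}K_g^{K_0,*}(t)$, and since the scalar factor is positive this gives
\[
\norm{K^{K_0,*}(t)}_{\mathcal{H}}=e^{gt+\alpha_0\beta_0(t)}\,\norm{K_g^{K_0,*}(t)}_{\mathcal{H}}.
\]
I would then handle the two factors separately: the scalar one tends to $0$ almost surely, while the detrended norm stays bounded in probability.

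For the scalar factor, exactly as already observed for the zeroth component in \eqref{eq:K0_nd}, the law of the iterated logarithm yields $e^{gt+\alpha_0\beta_0(t)}\to 0$ almost surely, precisely because $g<0$; in particular it converges to $0$ in probability. For the detrended factor, Theorem \ref{teo:convtoK_infty} gives $\mathfrak{L}(K_g^{K_0,*}(t))\to\mathfrak{L}(K_{g,\infty}^{K_0,*})$ in $\mathcal{H}$, and its proof shows $K_{g,\infty}^{K_0,*}\in\mathcal{H}$ almost surely, so $\PP(\norm{K_{g,\infty}^{K_0,*}}_{\mathcal{H}}\ge M)\to 0$ as $M\to\infty$. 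Applying the Portmanteau theorem to the closed set $\{x\in\mathcal{H}:\norm{x}_{\mathcal{H}}\ge M\}$ would give $\limsup_{t\to\infty}\PP(\norm{K_g^{K_0,*}(t)}_{\mathcal{H}}\ge M)\le\PP(\norm{K_{g,\infty}^{K_0,*}}_{\mathcal{H}}\ge M)$, which is the required boundedness in probability.

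Finally, I would combine the two estimates through the elementary inclusion $\{\norm{K^{K_0,*}(t)}_{\mathcal{H}}>\epsilon\}\subseteq\{\norm{K_g^{K_0,*}(t)}_{\mathcal{H}}\ge M\}\cup\{e^{gt+\alpha_0\beta_0(t)}>\epsilon/M\}$, valid for any $M>0$, namely
\[
\PP\left(\norm{K^{K_0,*}(t)}_{\mathcal{H}}>\epsilon\right)
\le
\PP\left(\norm{K_g^{K_0,*}(t)}_{\mathcal{H}}\ge M\right)
+\PP\left(e^{gt+\alpha_0\beta_0(t)}>\frac{\epsilon}{M}\right).
\]
Taking $\limsup_{t\to\infty}$, the second term vanishes by the first step, while the first is at most $\PP(\norm{K_{g,\infty}^{K_0,*}}_{\mathcal{H}}\ge M)$ by the second step; letting $M\to\infty$ then drives the whole bound to $0$, which is \eqref{eq:conv_prob_K}.

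The only delicate point is the Portmanteau/tightness step that upgrades convergence in law into a uniform-in-the-limit bound in probability on $\norm{K_g^{K_0,*}(t)}_{\mathcal{H}}$; everything else is routine. As an alternative that bypasses Theorem \ref{teo:convtoK_infty} and instead leans directly on Lemma \ref{lem:ltb_components_nd}, one can run the projection scheme of Theorem \ref{teo:convtoK_infty}: the low modes $P_N K^{K_0,*}(t)$ converge to $0$ almost surely by Lemma \ref{lem:ltb_components_nd}, and the main obstacle becomes controlling the high-frequency tail $\sum_{n\ge N}|K_n^{K_0,*}(t)|^2$ \emph{uniformly in $t$}, which one would secure through the all-time supremum bounds of Remark \ref{rem:sup_exp_Brownian} applied to the explicit representation \eqref{eq:Kn_nd_explicit} together with the summability of $1/\lambda_n$ and of the coefficients $c_n$.
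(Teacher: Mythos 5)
Your main argument is correct, and it takes a genuinely different route from the paper. You factor $K^{K_0,*}(t)=e^{gt+\alpha_0\beta_0(t)}K^{K_0,*}_g(t)$ and combine the almost sure vanishing of the scalar detrending factor (from $g<0$ and the law of the iterated logarithm) with boundedness in probability of $\norm{K^{K_0,*}_g(t)}_{\mathcal{H}}$, the latter extracted from the convergence in law of Theorem \ref{teo:convtoK_infty} via the Portmanteau inequality applied to the closed sets $\{\norm{x}_{\mathcal{H}}\ge M\}$ together with $\PP(\norm{K^{K_0,*}_{g,\infty}}_{\mathcal{H}}\ge M)\to 0$; the union-bound inclusion and the $M\to\infty$ limit then close the argument, and no independence between the two factors is needed. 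The paper does not pass through the detrended path at all here: it works directly with the Fourier components of $K^{K_0,*}(t)$, uses Lemma \ref{lem:ltb_components_nd} for the componentwise a.s.\ convergence, and then constructs, for each $\eta>0$, an event $\Omega_\eta$ of probability at least $1-2\eta$ on which the full $\mathcal{H}$-norm converges to zero, by choosing $n$-dependent thresholds in Remark \ref{rem:sup_exp_Brownian} so that the exceptional probabilities are summable and dominated convergence for series applies; convergence in probability is then deduced in a second step. Your route is shorter and more modular, but it inherits the burden of Theorem \ref{teo:convtoK_infty} — in particular the a.s.\ finiteness of $\norm{K^{K_0,*}_{g,\infty}}_{\mathcal{H}}$, which in the paper is itself secured by the same kind of uniform-in-$n$ supremum estimates — whereas the paper's argument is self-contained and in fact yields a.s.\ convergence on sets of probability arbitrarily close to one, hence slightly more than the stated convergence in probability. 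Your closing ``alternative'' sketch (low modes a.s.\ to zero plus a uniform-in-$t$ control of the high-frequency tail via Remark \ref{rem:sup_exp_Brownian} and the summability of $1/\abs{\lambda_n}$ and of the $c_n$) is essentially the paper's actual proof.
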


\begin{proof}
The proof is divided into two steps. First we prove that there exists a subset of $\Omega$, arbitrarily big where $K^{K_0,*}(t)$ convergence almost surely holds. Then, we prove that from the previous fact, one can derive convergence in probability.

{\it Step} 1. First we prove that for each $\eta\geq 0$ there exists $\Omega_\eta$ such that
\begin{equation}\label{eq:measure_Omega_eta}
\PP\left(\Omega_\eta\right)\geq1-\eta,
\end{equation}
and for each $\omega\in\Omega_\eta$,
\begin{equation}\label{eq:conv_as_Omega_eta}
\lim_{t\to\infty}\norm{K^{K_0,*}(t,\omega)}_{\mathcal{H}}=0.
\end{equation}
We write $K^{K_0,*}(t,\omega)$ in terms of a Fourier expansion, and then by recalling the explicit formula for $K_n(t)$ (see \eqref{eq:K0_nd} and \eqref{eq:Kn_nd_explicit}), we estimate the $\mathcal{H}$-norm with
\begin{multline}\label{eq:inequality_sum}
\norm{K^{K_0,*}(t,\omega)}_{\mathcal{H}}\leq \sum_{n\geq 0} \abs{K_n(t,\omega)}\norm{e_n}_{\mathcal{H}}
=K_0(t,\omega)+\sum_{n\geq 1}K_n(0)\tilde{\Phi}^n_t(\omega)\\
+\sum_{n\geq 1}\tilde{\Phi}^n_tc_n\int_0^t (\tilde{\Phi}^n_s)^{-1}K_0(s)ds.
\end{multline}
From Lemma \ref{lem:ltb_components}, there exists $\Omega_0$, whose measure is $\PP(\Omega_0)=1$ such that for each $\omega\in \Omega_0$,
\begin{equation}\label{eq:conv_first_term}
\lim_{t\to\infty}K_0(t,\omega)=0.
\end{equation}
In order to prove that also the second and  third terms of \eqref{eq:inequality_sum} converge to zero, the dominated convergence of series criterion is used. We start with the second term of \eqref{eq:inequality_sum}. First, given a positive quantity $a>0$, we split the exponential function and then we estimate by Young's inequality:
\begin{multline}\label{eq:first_serie}
\sum_{n\geq 1}K_n(0)\tilde{\Phi}^n_t(\omega)=\sum_{n\geq 1}K_n(0)\exp\left[\left(\lambda_n-\frac{\alpha_0^2}{2}\right)t+\alpha_0\beta_n(t,\omega)\right]\\
\leq\sum_{n\geq 1}K_n(0)\exp\left[(\lambda_n+a)t\right]\exp\left[-\left(\frac{\alpha_0^2}{2}+a\right)t+\alpha_0\beta_n(t,\omega)\right]\\
\leq\sum_{n\geq 1}K_n(0)^2+\sum_{n\geq 1}\exp\left[2(\lambda_n+a)t\right]\exp\left[-(\alpha_0^2+2a)t+2\alpha_0\beta_n(t,\omega)\right].
\end{multline}
Given $n\geq 1$, we apply Remark \ref{rem:sup_exp_Brownian} with $\mu=\alpha_0^2+2a$, $\sigma=2\alpha_0$, and $x_n=\exp[n]$, and we get that there exists a set $\Omega^{\eta,1}_n$ such that, for $\lambda(a)=\frac{\alpha_0^2+2a}{4\alpha_0^2}$,
\[\PP\left(\Omega^{\eta,1}_n\right)\geq 1-e^{-\lambda(a) n},\]
where $a>0$ is chosen in order to get $\sum_{n\geq 1}e^{-\lambda(a) n}$ arbitrarily small, in particular, smaller than $\eta$.
For each $\omega\in \Omega^{\eta,1}_n$,
\[\exp\left[-(\alpha_0^2+2a)t+2\alpha_0\beta_n(t,\omega)\right]\leq \exp[n].\]
We define $\Omega^\eta_1=\bigcap_{n\geq 1}\left(\Omega^{\eta,1}_n\right)$ whose measure is
\[\PP\left(\Omega^\eta_1\right)=1-\PP\left(\bigcup_{n\geq 1}\left(\Omega^{\eta,1}_n\right)^c\right)\geq 1-\sum_{n\geq 1}\PP\left(\left(\Omega^{\eta,1}_n\right)^c\right)\geq1-\sum_{n\geq 1}e^{-\lambda(a)n}\geq 1-\eta.\]
If we choose $\omega\in\Omega^\eta_1$, we get that \eqref{eq:first_serie} can be bounded by
\[\leq\sum_{n\geq 1}K_n(0)^2+\sum_{n\geq 1}\exp\left[2(\lambda_n+a)+n\right].
\]
Since $\lambda_n\sim -n^2$ and $K(0)\in \mathcal{H}$, for each $\omega\in \Omega^\eta_1$ both series are finite. Moreover, by the law of the iterated logarithm we have that $\tilde{\Phi}^n_t(\omega)$ converges to zero, almost surely; then by dominated convergence of series, we conclude that for each $\omega\in\Omega^\eta_1$,

\begin{equation}\label{eq:conv_second_term}
\lim_{t\to\infty}\sum_{n\geq 1}K_n(0)\tilde{\Phi}^n_t(\omega)=0.
\end{equation}
Now we focus on the third series of \eqref{eq:inequality_sum}. From Lemma \ref{lem:ltb_components}  we know that $K_0(t)$ converges almost surely to 0, so for each $\omega$, we can bound $K_0(t,\omega)$ with a constant depending on $\omega$, $C(\omega)$. Moreover, given a positive constant $a>0$, we can split the exponential function as
\begin{multline}\label{eq:second_serie}
\sum_{n\geq 1}\tilde{\Phi}^n_t c_n\int_0^t (\tilde{\Phi}^n_s)^{-1}K_0(s)ds
\\
=\sum_{n\geq 1}c_n\int_0^t \exp\left[\left(\lambda_n-\frac{\alpha_0^2}{2}\right)r+\alpha_0\left(\beta_n(t,\omega)-\beta_n(t-r,\omega)\right)\right] K_0(s)ds\\
\leq\sum_{n\geq 1}C(\omega)c_n\int_0^t \exp[(\lambda_n+a)r]\exp\left[-\left(\frac{\alpha_0^2}{2}+a\right)r+\alpha_0\left(\beta_n(t,\omega)-\beta_n(t-r,\omega)\right)\right]ds.
\end{multline}
%By some change of the time variable, we rewrite
%\begin{multline*}
%\PP\left(\sup_{t-s\geq 0}\exp\left[-\left(\frac{\alpha_0^2}{2}+a\right)(t-s)+\alpha_0\left(\beta_n(t)-\beta_n(s)\right)\right]>x\right)=\\
%=\PP\left(\sup_{t-s\geq 0}\exp\left[-\left(\frac{\alpha_0^2}{2}+a\right)(t-s)+\alpha_0\beta_n(t-s)\right]>x\right)=\\
%=\PP\left(\sup_{r\geq 0}\exp\left[-\left(\frac{\alpha_0^2}{2}+a\right)r+\alpha_0\beta_n(r)\right]>x\right)
%\end{multline*}
Given $n\geq 1$, we apply Remark \ref{rem:sup_exp_Brownian} with $\mu=\big(\frac{\alpha_0^2}{2}+a\big)$, $\sigma=\alpha_0$, and $x_n=(1+n^\alpha)$, and we get that there exists a set $\Omega^{\eta,2}_n$ such that, for $\lambda(a)=\frac{\alpha_0^2+2a}{\alpha_0^2}$,
\[\PP\left(\Omega^{\eta,1}_n\right)\geq 1-e^{-\lambda(a) \log(1+n^\alpha)},\]
where $a>0$ is chosen in order to get $\sum_{n\geq 1}\frac{1}{(1+n^\alpha)^{\lambda(a)}}$ arbitrarily small, in particular, smaller than $\eta$, and $\alpha$ is such that $\frac{1}{\lambda(a)}<\alpha<1$. The bound from below is needed in order to have that the series $\sum_{n\geq 1}\frac{1}{(1+n^\alpha)^{\lambda(a)}}$ converges. The bound from above is used in the next calculation.
For each $\omega\in \Omega^{\eta,2}_n$,
\[\exp\left[-\left(\frac{\alpha_0^2}{2}+a\right)(t-s)+\alpha_0\left(\beta_n(t)-\beta_n(s)\right)\right]\leq (1+n^\alpha).\]
We define $\Omega^\eta_2=\bigcap_{n\geq 1}\left(\Omega^{\eta,2}_n\right)$ whose measure is
\[\PP\left(\Omega^\eta_2\right)=1-\PP\left(\bigcup_{n\geq 1}\left(\Omega^{\eta,2}_n\right)^c\right)\geq 1-\sum_{n\geq 1}\PP\left(\left(\Omega^{\eta,2}_n\right)^c\right)\geq1-\sum_{n\geq 1}\frac{1}{(1+n^\alpha)^{\lambda(a)}}\geq 1-\eta.\]
Notice that $c_n$ is uniformly bounded with respect to $n$. By the H\"older inequality, since $N\in L^\infty(S^1)$ and by assumption \eqref{condition_6},
\begin{multline*}
c_n=\int_{S_1}\left(\frac{f}{\gamma e_0}\right)^{1/\sigma}N^{-\frac{1-\sigma}{\sigma}}e_ndx\leq\norm{e_N}_{L^2}\int_{S_1}\left(\frac{f}{\gamma e_0}\right)^{2/\sigma}N^{-\frac{2(1-\sigma)}{\sigma}}dx\\
\leq\norm{e_N}_{L^2}\int_{S_1}\left(\frac{f}{\gamma e_0 N}\right)^{2/\sigma}N^{2}dx\leq\norm{e_N}_{L^2}\norm{N}_{L^\infty}\int_{S_1}\left(\frac{f}{\gamma e_0 N}\right)^{2/\sigma}dx\leq C.
\end{multline*}
If we choose $\omega\in\Omega^\eta_1$, we get that \eqref{eq:second_serie} can be bounded by
\begin{multline*}
\leq\sum_{n\geq 1}C(\omega)c_n(1+n^\alpha)\int_0^t \exp[(\lambda_n+a)(t-s)] ds\\
=\sum_{n\geq 1}C(\omega)c_n(1+n^\alpha)\frac{\left(e^{(\lambda_n+a)t}-1\right)}{\lambda_n+a}\\
\leq\sum_{n\geq 1}C(\omega)C(1+n^\alpha)\frac{e^{(\lambda_n+a)t}}{\lambda_n+a}-\sum_{n\geq 1}C(\omega)C\frac{1+n^\alpha}{\lambda_n+a}.
\end{multline*}
If we choose $\alpha<1$, since $\lambda_n\sim -n^2$, for each $\omega\in \Omega^\eta_2$ both series are finite. Then, by dominated convergence for series, we conclude that for each $\omega\in\Omega^\eta_2$,
\begin{equation}\label{eq:conv_third_term}
\lim_{t\to\infty}\sum_{n\geq 1}\tilde{\Phi}^n_t c_n\int_0^t (\tilde{\Phi}^n_s)^{-1}K_0(s)ds=0.
\end{equation}

In summary, we define $\Omega_\eta=\Omega_0\cap\Omega^\eta_1\cap\Omega^\eta_1$ whose measure is
\[\PP(\Omega_\eta)=1-\PP\left(\Omega_0^c\cup\left(\Omega^\eta_1\right)^c\cap\left(\Omega^\eta_1\right)^c\right)\geq 1-\PP(\left(\Omega^\eta_1\right)^c)-\PP\left(\left(\Omega^\eta_2\right)^c\right)\geq 1-2\eta.\]
Combining  \eqref{eq:conv_first_term}, \eqref{eq:conv_second_term}, and \eqref{eq:conv_third_term}, the first step is proved.

{\it Step} 2. Now, we prove that the previous step implies convergence in probability of $K^{K_0,*}$ to $0$, namely \eqref{eq:conv_prob_K}. Given $\eta>0$, there exists $\Omega_\eta$ whose measure is arbitrarily big, in the sense of \eqref{eq:measure_Omega_eta} and where convergence almost surely holds; see \eqref{eq:conv_as_Omega_eta}.
We use $\Omega_\eta$ to split $P(||K^{K_0,*}(t)||_H>\epsilon)$ into two terms:
\[
\PP\left(\norm{K^{K_0,*}(t)}_{\mathcal{H}}>\epsilon\right)=\PP\left(\left(\norm{K^{K_0,*}(t)}_{\mathcal{H}}>\epsilon\right)\cap \Omega_\eta\right)
+\PP\left(\left(\norm{K^{K_0,*}(t)}_{\mathcal{H}}>\epsilon\right)\cap \Omega^c_\eta\right).
\]
Given the probability space $(\Omega, \mathcal{F})$ and the subset $\Omega_\eta$, we introduce a new probability measure, \[\tilde{\PP}_\eta\left(\cdot\right)=\frac{\PP\left(\cdot\cap \Omega_\eta\right)}{\PP\left(\Omega_\eta\right)}.\]
We can now rephrase convergence \eqref{eq:conv_as_Omega_eta} in terms of the probability $\tilde{\PP}_\eta$, and we say that \eqref{eq:conv_as_Omega_eta} is equivalent to
\[\lim_{t\to\infty}\norm{K^{K_0,*}}_{L^2(S^2)}=0,\quad \tilde{\PP}_\eta\text{-a.s.}\]
Since convergence almost surely implies convergence in probability, we get that for each $\epsilon$
 \[\lim_{t\to\infty} \tilde{\PP}_\eta\left(\norm{K^{K_0,*}(t)}_{\mathcal{H}}>\epsilon\right)=0.\]
We can rewrite the first term on the right-hand side in terms of the probability measure $\tilde{\PP}$, and given the convergence in probability we can say that there exists $t^*_\eta$ such that for all $t>t^*_\eta$
\begin{align*}
\PP\left(\left(\norm{K^{K_0,*}(t)}_{\mathcal{H}}>\epsilon\right)\cap \Omega_\eta\right)=\tilde{\PP}_\eta\left( \norm{K^{K_0,*}(t)}_{\mathcal{H}}>\epsilon\right)\PP(\Omega_\eta)\leq \eta.
\end{align*}
Since $\PP\left(\Omega^c_\eta\right)=\eta$, the second term on the right-hand side can be bounded with $\eta$; thus we conclude that for all $t>t^*_\eta$,
\begin{align*}
\PP\left(\norm{K^{K_0,*}(t)}_{\mathcal{H}}>\epsilon\right)\leq 2\eta.
\end{align*}
\end{proof}

\begin{remark}
We recall that the detrending rate in the deterministic setting is $g_{det}=\frac{\lambda_0-\rho}{\sigma}$. It is worth doing a comparison with the deterministic case. If $\sigma \in (0,2)\setminus \{1\}$, then $g<g_{det}$, which implies that there exists a set of values for the parameters $\lambda_0,\rho$ such that extinction occurs in the stochastic setting and not in the deterministic one. If $\sigma\in (1,\infty)$, then $g_{det}<g$ and the situation is reversed. %%%\hfill\qedo
\end{remark}

%%%\appendix
%%%\appendixnotitle
%%%\subsection{Proofs for Section 2} \label{sec:proofs_section2}

\appendix
\section{Appendix}
\label{app:A} 
\subsection{Well posedness of the state equation}\label{appSE}
The rest of the section is devoted to the proof of existence of solution of \eqref{eq:K_abstract}. We start our analysis by studying the well posedness of the noise.
%Which type of solution, should be consider? If we want to work with strong solution, we need
%\[\mathbb{P}\left(\int_0^T||B(K_s)||^2_{\mathcal{L}_0}ds<\infty\right)=1\]
%\[\text{with}\,\,\int_0^T||B(K_s)||^2_{\mathcal{L}_0}ds=\int_0^T\text{Tr}(BB^*)ds.\]

\begin{lemma}\label{lem:hilbert_schimdt_condition}
Let $B:\mathcal{H}\to \mathcal{L}:=\mathcal{L}(\mathcal{H};\mathcal{H})$ be the operator defined in \eqref{eq:B_ej}.
If Assumptions {\rm \eqref{ass:state}(i) and (ii)} hold, then
\begin{equation}\label{wp_mild}
\int_0^T\norm{e^{t\mathcal{L}}B(K)}_{\mathcal{L}_0}dt<\infty.
\end{equation}
Moreover, if $\Sigma_\alpha=\sum_j\norm{\alpha_j}^2_\infty<\infty$, then
\begin{equation}\label{wp_strong}
\int_0^T||B(K_s)||^2_{\mathcal{L}_0}ds=\int_0^T\text{Tr}(BB^*)ds<\infty.
\end{equation}
\end{lemma}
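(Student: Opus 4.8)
The plan is to reduce both integrability conditions to elementary scalar series by exploiting that every operator in sight is diagonal in the eigenbasis $\{e_j\}$ of $\mathcal{L}$. Since $e^{t\mathcal{L}}e_j=e^{\lambda_j t}e_j$ and, by \eqref{eq:B_ej}, $B(K)e_j=\alpha_j(K)\langle K,e_j\rangle e_j$, the composition $e^{t\mathcal{L}}B(K)$ is again diagonal, with $j$-th diagonal entry $e^{\lambda_j t}\alpha_j(K)\langle K,e_j\rangle$. First I would record the Hilbert--Schmidt norm identity
\[
\norm{e^{t\mathcal{L}}B(K)}_{\mathcal{L}_0}^2=\sum_j e^{2\lambda_j t}\,\alpha_j(K)^2\,\langle K,e_j\rangle^2 ,
\]
obtained by summing $\norm{e^{t\mathcal{L}}B(K)e_j}^2$ over $j$; the interchange of the sum with the inner products is legitimate because the operator is diagonal and all terms are nonnegative (Tonelli). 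The analogue without the semigroup reads $\text{Tr}(B(K)B(K)^*)=\sum_j\alpha_j(K)^2\langle K,e_j\rangle^2$, using that the diagonal operator $B(K)$ is self-adjoint.

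For \eqref{wp_mild} I would bound $\alpha_j(K)^2\le\alpha_\infty$ by Assumption~\ref{ass:state}(i) and $e^{2\lambda_j t}\le e^{2\lambda_0 t}$, since $\lambda_0$ is the top eigenvalue, to get $\norm{e^{t\mathcal{L}}B(K)}_{\mathcal{L}_0}^2\le\alpha_\infty e^{2\lambda_0 t}\sum_j\langle K,e_j\rangle^2=\alpha_\infty e^{2\lambda_0 t}\norm{K}_{\mathcal{H}}^2$. Taking square roots gives $\norm{e^{t\mathcal{L}}B(K)}_{\mathcal{L}_0}\le\sqrt{\alpha_\infty}\,e^{\lambda_0 t}\norm{K}_{\mathcal{H}}$, which is bounded on $[0,T]$ and hence integrable, proving \eqref{wp_mild}. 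I would note that the decay of $e^{2\lambda_j t}$ for large $j$ afforded by the analytic semigroup is available but not actually needed here, since $B(K)$ is already Hilbert--Schmidt once $K\in\mathcal{H}$ and the $\alpha_j$ are uniformly bounded.

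For \eqref{wp_strong} I would invoke the stronger summability $\Sigma_\alpha=\sum_j\norm{\alpha_j}_\infty^2<\infty$ in the trace identity: bounding $\langle K_s,e_j\rangle^2\le\norm{K_s}_{\mathcal{H}}^2$ uniformly in $j$ and factoring it out yields $\text{Tr}(B(K_s)B(K_s)^*)\le\Sigma_\alpha\norm{K_s}_{\mathcal{H}}^2$, so that $\int_0^T\norm{B(K_s)}_{\mathcal{L}_0}^2\,ds\le\Sigma_\alpha\int_0^T\norm{K_s}_{\mathcal{H}}^2\,ds$. The one point that requires genuine care, and which I expect to be the main obstacle, is precisely this last integrability in $s$: the pointwise-in-$\omega$ estimate must be paired with a uniform-in-time control of $\norm{K_s}_{\mathcal{H}}$ on $[0,T]$, which is where the continuity of the trajectories (or the moment bound of Theorem~\ref{teo:existence}) does the real work. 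The summability $\Sigma_\alpha$ itself only serves to decouple the $\alpha$-weights from the Fourier energy $\norm{K_s}_{\mathcal{H}}^2$; indeed $\Sigma_\alpha<\infty$ already forces $\alpha_\infty<\infty$, so Assumption~\ref{ass:state}(i) is subsumed and the finiteness is driven by the time integral rather than by the weights.
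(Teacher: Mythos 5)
Your proof is correct. For \eqref{wp_strong} it coincides with the paper's argument: expand the trace over the eigenbasis, bound $\langle K_s,e_j\rangle^2\le\norm{K_s}^2_{\mathcal H}$, and factor out $\Sigma_\alpha$. For \eqref{wp_mild}, however, you take a genuinely different route. The paper keeps the $j$-dependence of the semigroup factor: it discards the square-summability of the Fourier coefficients by bounding $\langle K,e_j\rangle^2\le\norm{K}^2_{\mathcal H}$, is left with $\sum_j e^{2\lambda_j t}\norm{\alpha_j}^2_\infty\le \alpha_\infty\sum_j e^{2\lambda_j t}$, and obtains finiteness only after integrating in time, via $\int_0^Te^{2\lambda_jt}\,dt\le 1/(2|\lambda_j|)$ together with $\sum_j 1/|\lambda_j|<\infty$ (true in dimension one since $\lambda_j\sim-j^2$). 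You instead keep $\sum_j\langle K,e_j\rangle^2=\norm{K}^2_{\mathcal H}$, bound the weights by $\alpha_\infty$ and the semigroup by its operator norm $e^{\lambda_0 t}$, and conclude that $e^{t\mathcal L}B(K)$ is Hilbert--Schmidt pointwise in $t$, uniformly on $[0,T]$. Your argument is shorter and dimension-independent, and your side remark is accurate: because this particular $B$ carries the factor $\langle K,e_j\rangle$ on the diagonal, $B(K)$ is already Hilbert--Schmidt under Assumption (i) alone, so the parabolic smoothing is not actually needed. What the paper's longer route buys is precisely the content of the remark following the lemma: it isolates the mechanism ($\sum_j 1/|\lambda_j|<\infty$) that would salvage the mild formulation even for a noise coefficient that is merely bounded rather than Hilbert--Schmidt, and it explains why that mechanism fails in spatial dimension two. (Incidentally, your convention $e^{t\mathcal L}e_j=e^{\lambda_j t}e_j$ is the one consistent with $\lambda_0$ being the top eigenvalue; the appendix's $e^{-t\lambda_j}$ is a sign slip.) Finally, your caveat that the time integral in \eqref{wp_strong} requires a uniform control of $\norm{K_s}_{\mathcal H}$ on $[0,T]$ is well placed: the paper absorbs this into $\norm{K}_{L^\infty([0,T];\mathcal H)}$ without comment, which is legitimate only because the lemma is applied inside the fixed-point space $\mathcal{H}_p(0,T)$ where such bounds hold.
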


\begin{proof}
We start by proving \eqref{wp_strong}:
\begin{multline*}
||B(K_s)||^2_{\mathcal{L}_0}=\sum_j ||B(K)e_j||^2_{\mathcal{H}}=\sum_j ||\alpha_j(K)\langle K,e_j \rangle e_j||^2_{\mathcal{H}} \\
\leq\sum_j||\alpha_j||^2_{\infty} \langle K,e_j \rangle^2||e_j||^2_{\mathcal{H}}
\leq\norm{K}^2_{\mathcal{H}}\sum_j||\alpha_j||^2_{\infty}<\infty.
\end{multline*}
In this way, \eqref{wp_strong} is proved and we get that in the case of strong solution, the noise is well posed.
Regarding the proof of \eqref{wp_mild}, we recall that
\[
    e^{t\mathcal{L}}:\mathcal{H}\to \mathcal{H},\quad e^{t\mathcal{L}}u:=\sum_j^{\infty}e^{-t\lambda_j}\langle u,e_j\rangle e_j.
\]
By the definition of norm in the space of the Hilbert--Schmidt operator, we get
\[
\norm{e^{t\mathcal{L}}B(K)}_{\mathcal{L}_0}=\sum_j\norm{e^{t\mathcal{L}}B(K) e_j}^2_{\mathcal{H}}.
\]
From the definition of the operator $B$ (see \eqref{eq:B_ej}),
\begin{multline}\label{eq:operator}
    e^{t\mathcal{L}}B(K)e_j=e^{t\mathcal{L}}\alpha_j(K)\langle K,e_j \rangle e_j=\sum_i e^{-t\lambda_i}\langle\alpha_j(K)\langle K,e_j \rangle e_j,e_i\rangle e_i\\=e^{-t\lambda_j}\alpha_j(K)\langle K,e_j \rangle  e_j.
    \end{multline}
Plugging \eqref{eq:operator} into the definition of the Hilbert--Schmidt norm,
\begin{multline*}
 \norm{e^{t\mathcal{L}}B(K)}_{\mathcal{L}_0}=\sum_j\norm{e^{t\mathcal{L}}B(K) e_j}^2_{\mathcal{H}}=\sum_j\norm{e^{-t\lambda_j}\alpha_j(K)\langle K,e_j \rangle  e_j}^2_{\mathcal{H}}\\
 =\sum_je^{-2\lambda_jt}\langle K,e_j \rangle^2\norm{\alpha_j}^2_\infty\leq \norm{K_t}^2_{\mathcal{H}}\sum_je^{-2\lambda_jt}\norm{\alpha_j}^2_\infty.
\end{multline*}
Then, integrating in time, by assumption of boundedness on $\alpha_j$, \eqref{condition_0},
\begin{align*}
 \int_0^T\norm{e^{t\mathcal{L}}B(K_t)}_{\mathcal{L}_0}dt
 &= \norm{K_t}^2_{L^\infty([0,T];\mathcal{H})}\int_0^T\sum_je^{-2\lambda_jt}\norm{\alpha_j}^2_\infty dt\\
 &\leq C\norm{K_t}^2_{L^\infty([0,T];\mathcal{H})}\int_0^T\sum_je^{-2\lambda_jt} dt\\
& \leq C\norm{K_t}^2_{L^\infty([0,T];\mathcal{H})}\sum_j\frac{1-e^{-2T\lambda_j}}{2\lambda_j}\\
&\leq C\norm{K_t}^2_{L^\infty([0,T];\mathcal{H})}\sum_j\frac{1}{2\lambda_j}<\infty.
\end{align*}
\end{proof}

\begin{remark}
Well posedness of the noise when $\alpha_j$ are constant, namely, when directions have the same weight in the noise, is guaranteed in the setting of a mild formulation. The situation changes in higher dimension, $d=2$, where the series of the reciprocals of the eigenvalues is no longer finite and we cannot conclude positively as in the one-dimensional case.
Since from a modeling point of view there is no reason to choose some favorite direction in the noise, we will work with a mild solution. %%%\hfill\qedo
\end{remark}

\begin{lemma}\label{lem:lipschtzianity_B}
Let Assumptions {\rm \eqref{ass:state}(i) {\it and} (ii)} hold. The operator $B:\mathcal{H}\to \mathcal{L}:=\mathcal{L}(\mathcal{H};\mathcal{H})$ defined in \eqref{eq:B_ej} is Lipschitz, i.e., there exists a constant C such that for all $k,h\in \mathcal{H}$,
\begin{equation}\label{eq:lipschtiz_B}
\norm{e^{t\mathcal{L}}B(k)-e^{t\mathcal{L}}B(h)}_{\mathcal{L}_0}\leq C\norm{k-h}_{\mathcal{H}},\quad \norm{e^{t\mathcal{L}}B(k)}_{\mathcal{L}_0}\leq C(1+\norm{k}_{\mathcal{H}}),
\end{equation}
%and
%\begin{equation}\label{eq:lipschtiz_B_2}
%\norm{e^{t\mathcal{L}}B(k)}_{\mathcal{L}_0}\leq C(1+\norm{k}_{\mathcal{H}})
%\end{equation}
where C depends on $\Sigma$, $\Sigma'$, $\norm{h}_{\mathcal{H}}$.
\end{lemma}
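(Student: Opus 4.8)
The plan is to reduce the statement to the explicit diagonal action of $e^{t\mathcal{L}}B$ already recorded in \eqref{eq:operator}, and then to estimate the resulting weighted series by a standard add-and-subtract decomposition. First I would recall from \eqref{eq:operator} that $e^{t\mathcal{L}}B(K)e_j=e^{-t\lambda_j}\alpha_j(K)\langle K,e_j\rangle e_j$ for every $j$, so that by the definition of the Hilbert--Schmidt norm
\[
\|e^{t\mathcal{L}}B(k)-e^{t\mathcal{L}}B(h)\|_{\mathcal{L}_0}^2=\sum_j e^{-2t\lambda_j}\,\bigl|\alpha_j(k)\langle k,e_j\rangle-\alpha_j(h)\langle h,e_j\rangle\bigr|^2 .
\]
As in Lemma \ref{lem:hilbert_schimdt_condition}, the semigroup weights satisfy $e^{-2t\lambda_j}\le 1$, so they may be discarded uniformly in $t$ and $j$ (on a bounded time interval this is in any case immediate from the boundedness of $\{e^{t\mathcal{L}}\}$).

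Next I would split each summand by adding and subtracting $\alpha_j(k)\langle h,e_j\rangle$, writing
\[
\alpha_j(k)\langle k,e_j\rangle-\alpha_j(h)\langle h,e_j\rangle=\alpha_j(k)\langle k-h,e_j\rangle+\bigl(\alpha_j(k)-\alpha_j(h)\bigr)\langle h,e_j\rangle .
\]
Using $(a+b)^2\le 2a^2+2b^2$, the uniform bound $|\alpha_j(k)|^2\le\alpha_\infty$ from Assumption \ref{ass:state}(i), and the uniform-in-$j$ Lipschitz estimate $|\alpha_j(k)-\alpha_j(h)|\le L\|k-h\|_{\mathcal{H}}$ from Assumption \ref{ass:state}(ii), each summand is bounded by $2\alpha_\infty|\langle k-h,e_j\rangle|^2+2L^2\|k-h\|_{\mathcal{H}}^2|\langle h,e_j\rangle|^2$. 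Summing over $j$ and applying Parseval's identity twice then gives
\[
\|e^{t\mathcal{L}}B(k)-e^{t\mathcal{L}}B(h)\|_{\mathcal{L}_0}^2\le 2\bigl(\alpha_\infty+L^2\|h\|_{\mathcal{H}}^2\bigr)\|k-h\|_{\mathcal{H}}^2 ,
\]
and taking square roots yields the first inequality of \eqref{eq:lipschtiz_B} with a constant $C$ depending only on $\alpha_\infty$, $L$ and $\|h\|_{\mathcal{H}}$. The linear growth bound (the second inequality of \eqref{eq:lipschtiz_B}) follows at once by taking $h=0$: since $B(0)=0$, the same chain gives $\|e^{t\mathcal{L}}B(k)\|_{\mathcal{L}_0}\le\sqrt{2\alpha_\infty}\,\|k\|_{\mathcal{H}}\le C(1+\|k\|_{\mathcal{H}})$.

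I do not expect a genuine obstacle here; the proof is a routine Lipschitz estimate. The one point that truly requires both parts of Assumption \ref{ass:state} — and hence the only place where care is needed — is the uniformity in $j$: the boundedness constant $\alpha_\infty=\sup_j\|\alpha_j\|_\infty^2$ and the Lipschitz constant $L$ must be \emph{independent} of $j$ so that they can be pulled out of the infinite sum before Parseval is applied. Were the Lipschitz constants of the $\alpha_j$ to grow with $j$, the series defining the Hilbert--Schmidt norm would fail to close. The only remaining bookkeeping point, the control of the semigroup weights $e^{-2t\lambda_j}$, is harmless and is exactly the estimate already used in Lemma \ref{lem:hilbert_schimdt_condition}.
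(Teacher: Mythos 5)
Your proof is correct and follows essentially the same route as the paper's: the same add-and-subtract decomposition $\alpha_j(k)\langle k,e_j\rangle-\alpha_j(h)\langle h,e_j\rangle=\alpha_j(k)\langle k-h,e_j\rangle+(\alpha_j(k)-\alpha_j(h))\langle h,e_j\rangle$, followed by the uniform-in-$j$ bounds from Assumption \ref{ass:state}(i)--(ii) and Parseval. The only (immaterial) difference is in the treatment of the semigroup weights: you bound them uniformly in $j$ on a bounded time interval, while the paper factors out the constant $\Lambda=\sum_j e^{-2\lambda_j}$; you also explicitly derive the linear-growth bound from $B(0)=0$, which the paper leaves implicit.
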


\begin{proof}
By definition of the operator norm between normed vector spaces, 
\begin{align*}
   & \norm{e^{t\mathcal{L}}B(k)-e^{t\mathcal{L}}B(h)}^2_{\mathcal{L}_0}
    \\
    &=\sum_j\norm{e^{t\mathcal{L}}[\alpha_j(k)\langle k,e_j\rangle e_j]-e^{t\mathcal{L}}[\alpha_j(h)\langle h,e_j\rangle e_j]}^2_{\mathcal{H}}\\
      &  =\sum_j\norm{\sum_i e^{-\lambda_it}\left(\langle\alpha_j(k)\langle k,e_j\rangle e_j,e_i\rangle e_i-\langle\alpha_j(h)\langle h,e_j\rangle e_j,e_i\rangle e_i\right)}^2_{\mathcal{H}}\\
%        =\sum_je^{-2\lambda_jt}\norm{\alpha_j(k)\langle k,e_j\rangle  e_i-\alpha_j(h)\langle h,e_j\rangle e_j}^2_{\mathcal{H}}\\
 %       &=\sum_je^{-2\lambda_jt}\norm{\alpha_j(k)\langle k,e_j\rangle e_j+\alpha_j(k)\langle h,e_j\rangle e_j-\alpha_j(k)\langle h,e_j\rangle e_j-\alpha_j(h)\langle h,e_j\rangle e_j}^2_{L^2(S^1)}\leq\\
    &\leq  \sum_je^{-2\lambda_jt}\norm{\alpha_j(k)\langle k,e_j\rangle e_j-\alpha_j(k)\langle h,e_j\rangle e_j}^2_{\mathcal{H}}\\
&\hspace{3cm}-\sum_je^{-2\lambda_jt}\norm{\alpha_j(k)\langle h,e_j\rangle e_j-\alpha_j(h)\langle h,e_j\rangle e_j}^2_{\mathcal{H}} \\
    &\leq \sum_je^{-2\lambda_j}\norm{\alpha_j(k)}^2_\infty\langle k-h,e_j\rangle^2\norm{ e_j}^2_{\mathcal{H}}-\sum_je^{-2\lambda_j}\langle h,e_j\rangle^2\abs{\alpha_j(k)-\alpha_j(h)}^2\\
    &\leq \Lambda\left(\alpha_\infty+D\alpha_\infty\norm{h}_{\mathcal{H}}^2\right)\norm{k-h}^2_{\mathcal{H}},
\end{align*}
%\begin{align*}
%    \norm{e^{t\mathcal{L}}B(k)-e^{t\mathcal{L}}B(h)}^2_{\mathcal{L}_0}=&
%    \\&= \sum_j\norm{[e^{t\mathcal{L}}B(k)-e^{t\mathcal{L}}B(h)][e_j]}^2_{\mathcal{H}}
%    =\sum_j\norm{e^{t\mathcal{L}}[\alpha_j(k)\langle k,e_j\rangle e_j]-e^{t\mathcal{L}}[\alpha_j(h)\langle h,e_j\rangle e_j]}^2_{\mathcal{H}}\\
%        &=\sum_j\norm{\sum_i e^{-\lambda_it}\left(\langle\alpha_j(k)\langle k,e_j\rangle e_j,e_i\rangle e_i-\langle\alpha_j(h)\langle h,e_j\rangle e_j,e_i\rangle e_i\right)}^2_{\mathcal{H}}\\
%     %   &=\sum_j\norm{e^{-\lambda_jt}\alpha_j(k)\langle k,e_j\rangle  e_i-e^{-\lambda_j t}\alpha_j(h)\langle h,e_j\rangle e_j}^2_{\mathcal{H}}=\\
%       & =\sum_je^{-2\lambda_jt}\norm{\alpha_j(k)\langle k,e_j\rangle  e_i-\alpha_j(h)\langle h,e_j\rangle e_j}^2_{\mathcal{H}}\\
% %       &=\sum_je^{-2\lambda_jt}\norm{\alpha_j(k)\langle k,e_j\rangle e_j+\alpha_j(k)\langle h,e_j\rangle e_j-\alpha_j(k)\langle h,e_j\rangle e_j-\alpha_j(h)\langle h,e_j\rangle e_j}^2_{L^2(S^1)}\leq\\
%    &\leq  \sum_je^{-2\lambda_jt}\norm{\alpha_j(k)\langle k,e_j\rangle e_j-\alpha_j(k)\langle h,e_j\rangle e_j}^2_{\mathcal{H}}\\
%   &\quad\quad\quad\quad\quad\quad\quad\quad-\sum_je^{-2\lambda_jt}\norm{\alpha_j(k)\langle h,e_j\rangle e_j-\alpha_j(h)\langle h,e_j\rangle e_j}^2_{\mathcal{H}} \\
%   & \leq \sum_je^{-2\lambda_j}\norm{\alpha_j(k)}^2_\infty\langle k-h,e_j\rangle^2\norm{ e_j}^2_{\mathcal{H}}-\sum_je^{-2\lambda_j}\langle h,e_j\rangle^2\abs{\alpha_j(k)-\alpha_j(h)}^2\\
%   & \leq \Lambda\left(\alpha_\infty+D\alpha_\infty\norm{h}_{\mathcal{H}}^2\right)\norm{k-h}^2_{\mathcal{H}}.\\
%\end{align*}
where $\Lambda=\sum_j e^{-2\lambda_j}$.
 By assumptions \eqref{ass:state}(i) and (ii), we can conclude that \[ \norm{e^{t\mathcal{L}}B(k)-e^{t\mathcal{L}}B(h)}^2_{\mathcal{L}_0}\leq  C\norm{ k-h}^2_{\mathcal{H}},\]
where $C$ depends on $\Sigma_\alpha$, $D\Sigma_\alpha$, $\norm{h}_{\mathcal{H}}$.
\end{proof}

Now we give the proof of Theorem \ref{teo:existence}, which is inspired by classical results presented in \cite{fabbri2017stochastic} 
and also by \cite{Rosestolato2019}. 

\begin{proof}
The existence of solution is proved using the Banach contraction mapping principle in $\mathcal{H}_p(0,T):=\{Z:[0,T]\times \Omega\to \mathcal{H}:\,\,\sup_{t\in[0,T]}\E\left[|Z(s)|_{\mathcal{H}}^p\right]
<\infty\}$, with $p\geq 2$.
We define the map $\mathcal{K}:\mathcal{H}_p(0,T)\to \mathcal{H}_p(0,T)$ as
\[\mathcal{K}(Y)(t)=e^{t\mathcal{L}}K_0+\int_0^t e^{(t-s)\mathcal{L}}c(s)N ds
+\int_0^t e^{(t-s)\mathcal{L}}B(K(s))dW(s),\]
and then we need to prove that the expression below belongs to the space $\mathcal{H}_p(0,T)$ and then that the map $\mathcal{K}$ is a contraction for some $T_0$ small enough.
The proof is a slight modification of the proof of Theorem 1.152 in \cite{fabbri2017stochastic}. In particular, since the term depending on the control does not depend on the variable $Y$, the proof that $\mathcal{K}$ is a contraction is exactly that of Theorem 1.152 in \cite{fabbri2017stochastic}. In order to prove that $\mathcal{K}(Y)(t)\in \mathcal{H}_p(0,T)$, we just need to control the term where the control appears linearly. Since $c\in \mathcal{A}_{S,e_0}(K)$, this term can be easily bounded:
\begin{multline*}
\E\left[\norm{\int_0^t e^{(t-s)\mathcal{L}}c(s)N ds}_{\mathcal{H}}^p\right]\leq\E\left[\left(\int_0^t \norm{ e^{(t-s)\mathcal{L}}c(s)N }_{\mathcal{H}}ds\right)^p\right]\\
\leq\E\left[\left(\int_0^t \norm{ e^{(t-s)\mathcal{L}}}_{\mathcal{H}\to \mathcal{H}}\norm{c(s)N}_{\mathcal{H}} ds\right)^p\right]
\leq C\E\left[\left(\int_0^t \norm{ N}_{\infty}\norm{c(s)}_{\mathcal{H}} ds\right)^p\right]\\
\leq C.
\end{multline*}
\end{proof}

\section*{Acknowledgments}
The authors gratefully thank Professor Franco Flandoli, Professor Raouf Boucekkine, Professor Giorgio Fabbri, Professor Athanasios Yannacopoulos, and Professor Anastasios Xepapadeas for the fruitful discussions and suggestions.

\end{document}